\newcommand{\norm}[1]{\ensuremath{\left\| #1 \right\|}}
\newcommand{\bracket}[1]{\ensuremath{\left[ #1 \right]}}
\newcommand{\refeqn}[1]{(\ref{eqn:#1})}
\newcommand{\reffig}[1]{Figure \ref{fig:#1}}
\newcommand{\tr}[1]{\mbox{tr}\ensuremath{\negthickspace\bracket{#1}}}
\newcommand{\trs}[1]{\mathrm{tr}\ensuremath{[#1]}}
\newcommand{\SO}{\ensuremath{\mathsf{SO(3)}}}
\newcommand{\SE}{\ensuremath{\mathsf{SE(3)}}}
\renewcommand{\Re}{\ensuremath{\mathbb{R}}}
\newcommand{\Sph}{\ensuremath{\mathsf{S}}}
\newcommand{\xb}{\mathbf{x}}
\newtheorem{prop}{Proposition}
\newcommand{\sat}{\mathrm{sat}}
\newcommand{\zb}{\mathbf{z}}
\title{\LARGE \bf
Extended Kalman Filter on SE(3) for Geometric Control of a Quadrotor UAV}
\author{Farhad A. Goodarzi and Taeyoung Lee$^{*}$
\thanks{Farhad A. Goodarzi and Taeyoung Lee, Mechanical and Aerospace Engineering, The George
Washington University, Washington DC 20052
{\tt\small {\{fgoodarzi,tylee}\}@gwu.edu}}%
\thanks{$^*$This research has been supported in part by NSF under the grants CMMI-1243000 (transferred from 1029551), CMMI-1335008, and CNS-1337722.}}
\begin{document}

\maketitle
\thispagestyle{empty}
\pagestyle{empty}

\begin{abstract}


An extended Kalman filter (EKF) is developed on the special Euclidean group, $\SE$ for geometric control of a quadrotor UAV. It is obtained by performing an extensive linearization on $\SE$ to estimate the state of the quadrotor from noisy measurements. Proposed estimator considers all the coupling effects between rotational and translational dynamics, and it is developed in a coordinate-free fashion. The desirable features of the proposed EKF are illustrated by numerical examples and experimental results for several scenarios. The proposed estimation scheme on $\SE$ has been unprecedented and these results can be particularly useful for aggressive maneuvers in GPS denied environments or in situations where parts of onboard sensors fail. 
\end{abstract}

\section{Introduction}
Quadrotor UAVs are being utilized for various missions such as Mars surface exploration, search and rescue, and payload transportation~\cite{kumargrasp2013,kim2013grasping,gooddaewontaeyoungacc14} due to their simple structure and outstanding capabilities. It is very important to implement a robust controller which can handle uncertainties and disturbances to ensure the safety during mission. Different sensors utilized for controlling purposes commonly provide noisy measurements, which can cause instability and may result in the failure of the mission. Also, there are some states,  such as translational velocity, that cannot be measured directly. Furthermore, there are cases where each sensor onboard may fail. Thus, a filter or estimator is required in real-time flight to ensure the safety from noisy measurements.

The critical issue in designing controllers and estimators for quadrotors is that they are mostly based on local coordinates. Some developments and estimation derivations are demonstrated at~\cite{05983144, 07347700,07364907,07361930} based on Euler angles. They involve complicated expressions for trigonometric functions, and they exhibit singularities in representing quadrotor attitudes, thereby restricting their ability to achieve complex rotational maneuvers significantly. Furthermore, most of the studies in the existing literature do not consider the coupling effect between translational and rotational dynamics explicitly, so the estimation process and controller design are presented for only attitude of the quadrotor~\cite{05649111,06390926}. 

On the other hand, this issue restricts the researchers to design and introduce an estimator that can perform in situation where an onboard sensor fail to work. For instance, in a GPS denied environment, where there is no direct measurements are available for the position and translational velocities, estimator fails to provide estimations since the translational and rotational dynamics are not coupled. A quaternion-based Kalman filter for real-time pose estimation was shown in~\cite{07053092}. Quaternions do not have singularities but, as the three-sphere double-covers the special orthogonal group, one attitude may be represented by two antipodal points on the three-sphere. This ambiguity should be carefully resolved in quaternion-based attitude control systems and estimator, otherwise they may exhibit unwinding, where a rigid body unnecessarily rotates through a large angle even if the initial attitude error is small~\cite{BhaBerSCL00}. References \cite{06713891,07364907} provide a Kalman filter based on Euler-angles for attitude and position control of the quadrotors without experimental results. Kalman filters may not work properly in a real-time experimental testbed due to sensitivity and several failure situations. It is very important to perform and provide experiments to validate the numerical and analytical solutions.

Recently, the dynamics of a quadrotor UAV is globally expressed on the special Euclidean group, $\SE$, and nonlinear control systems are developed to track outputs of several flight modes and complicated models for autonomous payload transportation with a large number of degrees of freedom~\cite{farhadacc15,IJCASPAPERfarhad14,farhadthesisphd}. Several aggressive maneuvers of a quadrotor UAV are demonstrated based on a hybrid control architecture. As they are directly developed on the special Euclidean group, complexities, singularities, and ambiguities associated with minimal attitude representations or quaternions are completely avoided. In this paper, the geometric nonlinear controller presented in the prior work of the authors in~\cite{Farhad2013,farhadASME15} is utilized with an EKF, developed for the closed loop system on $\SE$. A coordinate-free from of linearization is performed for the closed loop system to be used in the EKF.

In short, new contributions and the unique features of the dynamics model, control system, and the extended kalman filter proposed in this paper compared with other studies are as follows: (i) it is developed for the full dynamic model of a quadrotor UAVs on $\SE$, including the coupling effects between the translational dynamics and the rotational dynamics on a nonlinear manifold, (ii) the control systems are developed directly on the nonlinear configuration manifold in a coordinate-free fashion in the presence of unstructured uncertainties in both rotational and translational dynamics, (iii) a precise linearization for closed loop system and an extended kalman filter for closed loop system developed on $\SE$ to estimate the unmeasured states and improve the noisy measurements. Thus, singularities of local parameterization are completely avoided to generate agile maneuvers in a uniform way, (iv) the proposed algorithm is validated with numerical simulations along with real-time experiments with a quadrotor UAV.\\
This paper is organized as follows. A dynamic model is presented and problem is formulated at Section \ref{sec:DM}. Control systems are constructed at Sections \ref{sec:control} and extended kalman filter developments are presented in \ref{sec:Kalman}, which are followed by numerical examples in Section \ref{sec:NS} and an experimental results in Section \ref{sec:EXP}.
\section{Quadrotor's Dynamical Model}\label{sec:DM}

Consider a quadrotor UAV model illustrated in \reffig{QM}. We choose an inertial reference frame $\{\vec e_1,\vec e_2,\vec e_3\}$ and a body-fixed frame $\{\vec b_1,\vec b_2,\vec b_3\}$. The origin of the body-fixed frame is located at the center of mass of this vehicle. The first and the second axes of the body-fixed frame, $\vec b_1,\vec b_2$, lie in the plane defined by the centers of the four rotors.

The configuration of this quadrotor UAV is defined by the location of the center of mass and the attitude with respect to the inertial frame. Therefore, the configuration manifold is the special Euclidean group $\SE$, which is the semi-direct product of $\Re^3$ and the special orthogonal group $\SO=\{R\in\Re^{3\times 3}\,|\, R^TR=I,\, \det{R}=1\}$. 

The mass and the inertial matrix of a quadrotor UAV are denoted by $m\in\Re$ and $J\in\Re^{3\times 3}$. Its attitude, angular velocity, position, and velocity are defined by $R\in\SO$, $\Omega,x,v\in\Re^3$, respectively, where the rotation matrix $R$ represents the linear transformation of a vector from the body-fixed frame to the inertial frame and the angular velocity $\Omega$ is represented with respect to the body-fixed frame. The distance between the center of mass to the center of each rotor is $d\in\Re$, and the $i$-th rotor generates a thrust $f_i$ and a reaction torque $\tau_i$ along $-\vec b_3$ for $1\leq i \leq 4$. The magnitude of the total thrust and the total moment in the body-fixed frame are denoted by $f\in\Re$, $M\in\Re^3$, respectively. 
\begin{figure}[h]
\setlength{\unitlength}{0.8\columnwidth}\footnotesize
\centerline{
\begin{picture}(1.2,0.91)(0,0)
\put(0,0){\includegraphics[width=0.95\columnwidth]{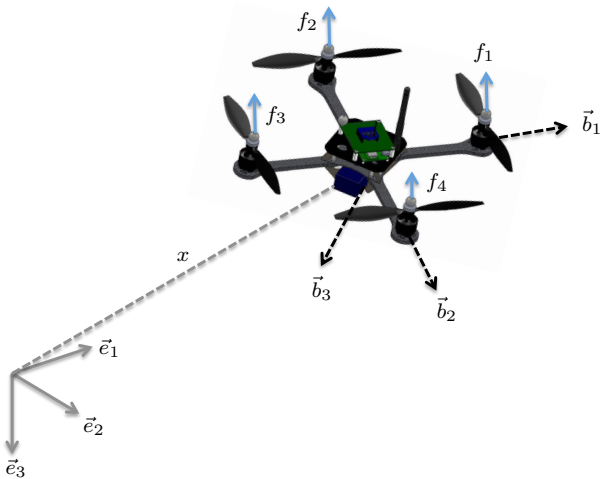}}
\put(0.2,0.23){\shortstack[c]{$\vec e_1$}}
\put(0.17,0.08){\shortstack[c]{$\vec e_2$}}
\put(0.02,0.0){\shortstack[c]{$\vec e_3$}}
\put(1.13,0.66){\shortstack[c]{$\vec b_1$}}
\put(0.85,0.3){\shortstack[c]{$\vec b_2$}}
\put(0.61,0.34){\shortstack[c]{$\vec b_3$}}
\put(0.92,0.8){\shortstack[c]{$f_1$}}
\put(0.58,0.86){\shortstack[c]{$f_2$}}
\put(0.52,0.68){\shortstack[c]{$f_3$}}
\put(0.83,0.55){\shortstack[c]{$f_4$}}
\put(0.35,0.41){\shortstack[c]{$x$}}
\end{picture}}
\caption{Quadrotor model}\label{fig:QM}
\end{figure}
By the definition of the rotation matrix $R\in\SO$, the direction of the $i$-th body-fixed axis $\vec b_i$ is given by $Re_i$ in the inertial frame, where $e_1=[1;0;0],e_2=[0;1;0],e_3=[0;0;1]\in\Re^3$. Therefore, the total thrust vector is given by $-fRe_3\in\Re^3$ in the inertial frame. In this paper, the thrust magnitude $f\in\Re$ and the moment vector $M\in\Re^3$ are viewed as control inputs. The corresponding equations of motion are given by
\begin{gather}
\dot x  = v,\label{eqn:EL1}\\
m \dot v = mge_3 - f R e_3 + \Delta_x,\label{eqn:EL2}\\
\dot R = R\hat\Omega,\label{eqn:EL3}\\
J\dot \Omega + \Omega\times J\Omega = M + \Delta_R,\label{eqn:EL4}
\end{gather}
where the \textit{hat map} $\hat\cdot:\Re^3\rightarrow\SO$ is defined by the condition that $\hat x y=x\times y$ for all $x,y\in\Re^3$. This identifies the Lie algebra $\SO$ with $\Re^3$ using the vector cross product in $\Re^3$. The inverse of the hat map is denoted by the \textit{vee} map, $\vee:\SO\rightarrow\Re^3$. Throughout this paper, the two-norm of a matrix $A$ is denoted by $\|A\|$. The standard dot product in $\Re^n$ is denoted by $\cdot$, i.e., $x\cdot y =x^Ty$ for any $x,y\in\Re^n$. Unstructured, but fixed uncertainties in the translational dynamics and the rotational dynamics of a quadrotor UAV are denoted by $\Delta_x$ and $\Delta_R\in\Re^3$, respectively. 

\section{Geometric Nonlinear Controller}\label{sec:control}
Since the quadrotor UAV has four inputs, it is possible to achieve asymptotic output tracking for at most four quadrotor UAV outputs.    The quadrotor UAV has three translational and three rotational degrees of freedom; it is not possible to achieve asymptotic output tracking of both attitude and position of the quadrotor UAV. This  motivates us to introduce two flight modes, namely (1) an attitude controlled flight mode, and (2) a position controlled flight mode. While a quadrotor UAV is under-actuated, a complex flight maneuver can be defined by specifying a concatenation of flight modes together with conditions for switching between them.
\subsection{Attitude Tracking Errors}
Suppose that a smooth attitude command $R_d(t)\in\SO$ satisfying the following kinematic equation is given:
\begin{align}
\dot R_d = R_d \hat\Omega_d,
\end{align}
where $\Omega_d(t)\in\Re^3$ is the desired angular velocity, which is assumed to be uniformly bounded. We first define errors associated with the attitude dynamics as follows~\cite{BulLew05,LeeITCST13}.
\begin{prop}\label{prop:1}
For a given tracking command $(R_d,\Omega_d)$, and the current attitude and angular velocity $(R,\Omega)$, we define an attitude error function $\Psi:\SO\times\SO\rightarrow\Re$, an attitude error vector $e_R\in\Re^3$, and an angular velocity error vector $e_\Omega\in \Re^3$ as follows~\cite{LeeITCST13}:
\begin{gather}
\Psi (R,R_d) = \frac{1}{2}\tr{I-R_d^TR},\label{eqn:psii}\\
e_R =\frac{1}{2} (R_d^TR-R^TR_d)^\vee,\label{eqn:errr}\\
e_\Omega = \Omega - R^T R_d\Omega_d\label{eqn:eomega}.
\end{gather}
\end{prop}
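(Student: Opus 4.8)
The statement as written packages three definitions, so the real content to establish is that these quantities genuinely characterize the attitude tracking error in a coordinate-free way: that $\Psi$ is a bona fide error function vanishing exactly at $R=R_d$, that $e_R$ is its derivative, and that $e_\Omega$ is the body-frame velocity mismatch consistent with $\Psi$ and $e_R$. The plan is to organize the verification around three claims: (a) $\Psi\geq 0$ with equality iff $R=R_d$ on the sublevel set where $\Psi<2$; (b) the left-trivialized derivative of $\Psi$ equals $e_R$; and (c) $\dot\Psi = e_R\cdot e_\Omega$ along the kinematics $\dot R = R\hat\Omega$ and $\dot R_d = R_d\hat\Omega_d$. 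Throughout I would abbreviate the relative rotation $Q=R_d^TR\in\SO$, so that $\Psi=\tfrac{1}{2}\tr{I-Q}$ and $e_R=\tfrac{1}{2}(Q-Q^T)^\vee$.

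For (a), I would use that every rotation is conjugate to a rotation by some angle $\theta\in[0,\pi]$, so $\tr{Q}=1+2\cos\theta$ and hence $\Psi=1-\cos\theta$. This is nonnegative, equals zero iff $\theta=0$, i.e. $Q=I$, i.e. $R=R_d$, and stays strictly below $2$ away from the antipodal angle $\theta=\pi$. I would flag explicitly that $\Psi<2$ is exactly the domain on which $\Psi$ behaves as a proper error function, rather than claiming global positive-definiteness, since $\theta=\pi$ is a nondegenerate critical point that must be excluded.

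For (b), the core computation, I would take a variation $\delta R = R\hat\eta$ with $\eta\in\Re^3$ and reduce $\mathbf{D}_R\Psi\cdot R\hat\eta$ to $-\tfrac{1}{2}\tr{R_d^TR\hat\eta}$. The key lemma is the trace identity $\tr{A\hat\eta}=-(A-A^T)^\vee\cdot\eta$, valid for any $A\in\Re^{3\times 3}$; it follows by expanding both sides in components, the symmetric part of $A$ contributing nothing since the trace of a symmetric matrix times a skew matrix vanishes. Applying it with $A=R_d^TR$ yields $\mathbf{D}_R\Psi\cdot R\hat\eta = \tfrac{1}{2}(R_d^TR-R^TR_d)^\vee\cdot\eta = e_R\cdot\eta$, which identifies $e_R$ as the gradient and, combined with (a), shows that $e_R=0$ together with $\Psi<2$ forces $Q=I$. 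For (c), I would differentiate $\Psi$ in time, substitute $\dot R=R\hat\Omega$ and $\dot R_d=R_d\hat\Omega_d$ to reach $\dot\Psi=\tfrac{1}{2}\tr{\hat\Omega_d Q - Q\hat\Omega}$, and apply the same trace identity twice (using cyclicity) to get $\dot\Psi=e_R\cdot(\Omega-\Omega_d)$. To reconcile this with the stated $e_\Omega=\Omega-Q^T\Omega_d$, I would invoke the axis-invariance $Qe_R=e_R$: since $e_R$ is parallel to the rotation axis of $Q$ it is fixed by $Q$, so $e_R\cdot Q^T\Omega_d = (Qe_R)\cdot\Omega_d = e_R\cdot\Omega_d$, whence $e_R\cdot e_\Omega = e_R\cdot(\Omega-\Omega_d)=\dot\Psi$.

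The hard part will be step (b): carrying out the coordinate-free differentiation cleanly and pinning down the trace identity in the correct sign convention, since every downstream property—the interpretation of $e_R$ as a gradient and the compatibility relation $\dot\Psi=e_R\cdot e_\Omega$—hinges on it. The positive-definiteness in (a) is routine once the angle parametrization is in hand, and the axis-invariance used in (c) is a short consequence of $e_R$ lying along the eigenvector of $Q$ with eigenvalue one; I would simply be careful to state the $\Psi<2$ domain restriction rather than overclaim global definiteness.
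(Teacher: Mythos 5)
Your proof is correct and is essentially the standard argument for this proposition: the paper itself gives no proof (it defers entirely to the cited reference), and the derivation there follows exactly your three-step scheme --- the angle parametrization giving $\Psi=1-\cos\theta$ and positive-definiteness on the sublevel set $\Psi<2$, the trace identity $\tr{A\hat\eta}=-(A-A^T)^\vee\cdot\eta$ identifying $e_R$ as the left-trivialized derivative, and the axis-invariance $R_d^TR\,e_R=e_R$ yielding $\dot\Psi=e_R\cdot e_\Omega$. No gaps.
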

\begin{proof} 
See \cite{farhadASME15}.
\end{proof}
\subsection{Attitude Tracking Controller}
We now introduce a nonlinear controller for the attitude controlled flight mode:
\begin{align}
M & = -k_R e_R -k_\Omega e_\Omega -k_I e_I\nonumber\\
&\qquad +(R^TR_d\Omega_d)^\wedge J R^T R_d \Omega_d + J R^T R_d\dot\Omega_d,\label{eqn:aM}\\
e_I & = \int_0^t e_\Omega(\tau)+ c_2e_R(\tau)d\tau,\label{eqn:eI}
\end{align}
where $k_R,k_\Omega,k_I,c_2$ are positive constants. The control moment is composed of proportional, derivative, and integral terms, augmented with additional terms to cancel out the angular acceleration caused by the desired angular velocity. Unlike common integral control terms where the attitude error is integrated only, here the angular velocity error is also integrated at \refeqn{eI}. This unique term is required to show exponential stability in the presence of the disturbance $\Delta_R$ in the subsequent analysis. 
\begin{prop}{(Attitude Controlled Flight Mode)}\label{prop:Att}
Consider the control moment $M$ defined in \refeqn{aM}. For positive constants $k_R,k_\Omega$, the zero equilibrium of tracking errors and the estimation errors is stable in the sense of Lyapunov, and $e_R,e_\Omega\rightarrow 0$ as $t\rightarrow\infty$. 
\end{prop}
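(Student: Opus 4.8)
The plan is to prove the claim by a Lyapunov argument on the error state $(e_R,e_\Omega,e_I)$: construct an energy-like function that is positive definite and non-increasing on a sublevel set, conclude Lyapunov stability and boundedness of all signals, and then extract the asymptotic convergence of $e_R,e_\Omega$ by Barbalat's lemma. Because the desired trajectory $(R_d,\Omega_d)$ is time-varying, the closed loop is non-autonomous and LaSalle's invariance principle does not apply directly, which is why Barbalat is the appropriate tool for the final limit.

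First I would derive the closed-loop error dynamics. Two kinematic facts are needed: the identity $\dot e_R = C(R,R_d)\,e_\Omega$ with $C=\tfrac12(\tr{R^TR_d}I-R^TR_d)$ and $\|C\|\le 1$, together with $\dot\Psi = e_R\cdot e_\Omega$ for $\Psi$ in \refeqn{psii}. Differentiating $e_\Omega=\Omega-R^TR_d\Omega_d$ along \refeqn{EL3}--\refeqn{EL4} and substituting the control moment \refeqn{aM}, the feedforward terms $(R^TR_d\Omega_d)^\wedge J R^TR_d\Omega_d$ and $JR^TR_d\dot\Omega_d$ are designed precisely to cancel the contributions from $\tfrac{d}{dt}(R^TR_d\Omega_d)$ and from the gyroscopic term $\Omega\times J\Omega$, leaving $J\dot e_\Omega=-k_R e_R-k_\Omega e_\Omega-k_I e_I+\Delta_R+(\text{terms quadratic in }e_\Omega)$.

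Next I would absorb the fixed disturbance into the integral state: since $\Delta_R$ is constant, set $z=k_I e_I-\Delta_R$, so the $-k_I e_I+\Delta_R$ contribution becomes $-z$, and from \refeqn{eI} one has $\dot z=k_I(e_\Omega+c_2 e_R)$. The key design choice is to take the Lyapunov cross-coupling constant equal to $c_2$. With
\[
\mathcal{V}=\tfrac12 e_\Omega\cdot J e_\Omega+k_R\Psi+c_2\,e_R\cdot J e_\Omega+\tfrac{1}{2k_I}\|z\|^2,
\]
differentiating and using $\dot\Psi=e_R\cdot e_\Omega$ with the $e_\Omega$-dynamics, the $z$-dependent cross terms $-e_\Omega\cdot z$ (from $e_\Omega\cdot J\dot e_\Omega$), $z\cdot e_\Omega+c_2 z\cdot e_R$ (from $\tfrac1{k_I}z\cdot\dot z$), and $-c_2 e_R\cdot z$ (from $c_2 e_R\cdot J\dot e_\Omega$) cancel identically, so $\dot{\mathcal V}$ depends on $(e_R,e_\Omega)$ only. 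On a sublevel set where $\Psi<\psi_1<2$ the standard bounds $\tfrac12\|e_R\|^2\le\Psi\le\tfrac{1}{2-\psi_1}\|e_R\|^2$ make $\mathcal V$ positive definite in $(e_R,e_\Omega,z)$ provided $c_2$ is small relative to $k_R,k_\Omega$, and using $\|C\|\le 1$ to bound the quadratic-in-$e_\Omega$ remainder collects $\dot{\mathcal V}$ into $-[\,\|e_R\|\ \|e_\Omega\|\,]\,W\,[\,\|e_R\|\ \|e_\Omega\|\,]^T$ for a symmetric $W$; choosing the gains so that $W\succ 0$ gives $\dot{\mathcal V}\le 0$.

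Then $\mathcal V$ non-increasing and bounded below yields stability in the sense of Lyapunov of $(e_R,e_\Omega,z)=0$ and boundedness of all signals, including the disturbance-estimation error $z$; integrating $\dot{\mathcal V}$ shows $e_R,e_\Omega\in L^2$, boundedness of the states makes $\dot e_R,\dot e_\Omega$ bounded so that $\|e_R\|^2+\|e_\Omega\|^2$ is uniformly continuous, and Barbalat's lemma delivers $e_R,e_\Omega\to 0$. The hard part will be certifying $W\succ 0$: the gyroscopic and feedforward remainders are only bounded by class-$\mathcal K$ functions of $\|e_\Omega\|$ and $\|\Omega_d\|$, so the bookkeeping among the $c_2\,e_R\cdot Je_\Omega$ coupling, the indefinite matrix $C$, and the quadratic velocity terms must be done carefully to pin down explicit gain and sublevel-set conditions. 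Finally, because $z$ drops out of $\dot{\mathcal V}$, only $e_R,e_\Omega$ are shown to converge while $e_I$ merely stays bounded, which is exactly what the statement asserts.
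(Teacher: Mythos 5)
Your proposal is correct and follows essentially the same route as the paper's proof, which is deferred to the cited reference \cite{farhadASME15}: there the argument is exactly a Lyapunov function of the form $\tfrac12 e_\Omega\cdot Je_\Omega+k_R\Psi+c_2\,e_R\cdot Je_\Omega$ augmented with a quadratic penalty on $\Delta_R-k_Ie_I$, the cancellation of the integral/disturbance cross terms, a negative semidefinite bound $-\zeta^TW\zeta$ in $\zeta=(\|e_R\|,\|e_\Omega\|)$ under small-$c_2$ gain conditions on a sublevel set of $\Psi$, and Barbalat's lemma for the limit. The only caveats are cosmetic: the remainder in the $e_\Omega$-dynamics is bilinear in $e_\Omega$ and $R^TR_d\Omega_d$ rather than purely quadratic in $e_\Omega$, and the convergence of $e_R,e_\Omega$ with mere boundedness of $e_I$ is exactly what the reference establishes.
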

\begin{proof}
See \cite{farhadASME15}.
\end{proof}
While these results are developed for the attitude dynamics of a quadrotor UAV, they can be applied to the attitude dynamics of any rigid body. Nonlinear adaptive controllers have been developed for attitude stabilization in terms of modified Rodriguez parameters~\cite{SubJAS04} and quaternions~\cite{SubAkeJGCD04}, and for attitude tracking in terms of Euler-angles~\cite{ShoJuaPACC02}. The proposed tracking control system is developed on $\SO$, therefore it avoids singularities of Euler-angles and Rodriguez parameters, as well as unwinding of quaternions. 

\subsection{Position Tracking Errors}
We introduce a nonlinear controller for the position controlled flight mode in this section. Suppose that an arbitrary smooth position tracking command $x_d (t) \in \Re^3$ is given. The position tracking errors for the position and the velocity are given by:
\begin{align}
e_x  = x - x_d,\quad
e_v  = \dot e_x = v - \dot x_d.
\end{align}
Similar with \refeqn{eI}, an integral control term for the position tracking controller is defined as
\begin{align}
e_i = \int_0^t e_v(\tau) + c_1 e_x (\tau) d\tau,
\end{align}
for a positive constant $c_1$ specified later. For a positive constant $\sigma\in\Re$, a saturation function $\sat_\sigma:\Re\rightarrow [-\sigma,\sigma]$ is introduced as
\begin{align*}
\sat_\sigma(y) = \begin{cases}
\sigma & \mbox{if } y >\sigma\\
y & \mbox{if } -\sigma \leq y \leq\sigma\\
-\sigma & \mbox{if } y <-\sigma\\
\end{cases}.
\end{align*}
If the input is a vector $y\in\Re^n$, then the above saturation function is applied element by element to define a saturation function $\sat_\sigma(y):\Re^n\rightarrow [-\sigma,\sigma]^n$ for a vector.
In the position controlled tracking mode, the attitude dynamics is controlled to follow the computed attitude $R_c(t)\in\SO$ and the computed angular velocity $\Omega_c(t)$ defined as
\begin{align}
R_c=[ b_{1_c};\, b_{3_c}\times b_{1_c};\, b_{3_c}],\quad \hat\Omega_c = R_c^T \dot R_c\label{eqn:RdWc},
\end{align}
where $b_{3_c}\in\Sph^2$ is given by
\begin{align}
 b_{3_c} = -\frac{-k_x e_x - k_v e_v -k_i\sat_\sigma(e_i) - mg e_3 +m\ddot x_d}{\norm{-k_x e_x - k_v e_v -k_i\sat_\sigma(e_i)- mg e_3 + m\ddot x_d}},\label{eqn:Rd3}
\end{align}
for positive constants $k_x,k_v,k_i,\sigma$. The unit vector $b_{1_c}\in\Sph^2$ is selected to be orthogonal to $b_{3_c}$, thereby guaranteeing that $R_c\in\SO$. It can be chosen to specify the desired heading direction, and the detailed procedure to select $b_{1c}$ is described later at Section \ref{sec:b1c}. 
\subsection{Position Tracking Controller}
The nonlinear controller for the position controlled flight mode, described by control expressions for the  thrust magnitude and the  moment vector, are:
\begin{align}
f & = ( k_x e_x + k_v e_v +k_i\sat_\sigma(e_i)+ mg e_3-m\ddot x_d)\cdot Re_3,\label{eqn:f}\\
M & = -k_R e_R -k_\Omega e_\Omega -k_I e_I\nonumber\\
&\qquad +(R^TR_c\Omega_c)^\wedge J R^T R_c \Omega_c + J R^T R_c\dot\Omega_c.\label{eqn:M}
\end{align}
The nonlinear controller given by Eq.~\refeqn{f}, \refeqn{M} can be given a backstepping interpretation. The computed attitude $R_c$ given in Eq.~\refeqn{RdWc} is selected so that the thrust axis $-b_3$ of the quadrotor UAV tracks the computed direction given by $-b_{3_c}$ in \refeqn{Rd3}, which is a direction of the thrust vector that achieves position tracking. The moment expression \refeqn{M} causes the attitude of the quadrotor UAV to asymptotically track $R_c$ and the thrust magnitude expression \refeqn{f} achieves asymptotic position tracking. 
\begin{prop}{(Position Controlled Flight Mode)}\label{prop:Pos}
Suppose that the initial conditions satisfy
\begin{align}\label{eqn:Psi0}
\Psi(R(0),R_c(0)) < \psi_1 < 1,
\end{align}
for positive constant $\psi_1$. Consider the control inputs $f,M$ defined in \refeqn{f}-\refeqn{M}.
For positive constants $k_x,k_v$, the zero equilibrium of the tracking errors and the estimation errors is stable in the sense of Lyapunov and all of the tracking error variables asymptotically converge to zero. Also, the estimation errors are uniformly bounded.
\end{prop}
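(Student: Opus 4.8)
The plan is to prove stability with a single composite Lyapunov function that binds together the translational, rotational, and integral (disturbance-estimation) error dynamics, exploiting the backstepping structure noted after \refeqn{M}: the commanded attitude $R_c$ is built so that if $R$ tracked $R_c$ exactly the translational loop would already be stabilized, while the moment \refeqn{M} drives $R\to R_c$. First I would write out the closed-loop error dynamics. For the translational channel, set the commanded force $A = -k_xe_x - k_ve_v - k_i\sat_\sigma(e_i) - mge_3 + m\ddot x_d$, so that \refeqn{Rd3} reads $b_{3_c}=-A/\norm{A}$ and \refeqn{f} reads $f=-A\cdot Re_3$. Substituting into \refeqn{EL2}, using $-fRe_3=(A\cdot Re_3)Re_3$, and regrouping through the definition of $A$ yields
\begin{align}
m\dot e_v = -k_xe_x - k_ve_v - k_i\sat_\sigma(e_i) + \Delta_x + X, \nonumber
\end{align}
where $X=(A\cdot Re_3)Re_3 - A$ is the force defect caused by attitude mismatch. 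The crucial structural fact is that $X=0$ whenever $Re_3=b_{3_c}$, and that on the sublevel set $\{\Psi(R,R_c)<\psi_1\}$ one can bound $\norm{X}$ by a constant multiple of the attitude-error magnitude $\norm{e_R}$. In parallel, the feedforward terms in \refeqn{M} cancel the gyroscopic and angular-acceleration contributions in \refeqn{EL4}, giving $J\dot e_\Omega = -k_Re_R - k_\Omega e_\Omega - k_Ie_I + \Delta_R$ together with $\dot e_R = E(R,R_c)e_\Omega$ for a bounded matrix $E$.

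Next I would introduce the estimation errors. Since $\Delta_x,\Delta_R$ are fixed, the integral terms act as constant-disturbance estimators; define $e_{\Delta_x}=\Delta_x-k_i\sat_\sigma(e_i)$ and $e_{\Delta_R}=\Delta_R-k_Ie_I$, whose dynamics follow by differentiating \refeqn{eI} and its translational analogue. I would then assemble
\begin{align}
\mathcal{V} &= \tfrac{1}{2}k_x\norm{e_x}^2 + \tfrac{1}{2}m\norm{e_v}^2 + c_1 m\,e_x\cdot e_v \nonumber\\
&\quad + k_R\Psi(R,R_c) + \tfrac12 e_\Omega\cdot Je_\Omega + c_2\, e_R\cdot Je_\Omega \nonumber\\
&\quad + \tfrac{1}{2k_i}\norm{e_{\Delta_x}}^2 + \tfrac{1}{2k_I}\norm{e_{\Delta_R}}^2. \nonumber
\end{align}
Positive-definiteness is established by the bound $\tfrac12\norm{e_R}^2\le\Psi(R,R_c)\le\frac{1}{2-\psi_1}\norm{e_R}^2$, valid on the set \refeqn{Psi0}, and by choosing the cross-weights $c_1,c_2$ small enough that the indefinite $e_x\cdot e_v$ and $e_R\cdot Je_\Omega$ terms are dominated, so that $\mathcal{V}$ is sandwiched between two quadratic forms in $(\norm{e_x},\norm{e_v},\norm{e_R},\norm{e_\Omega},\norm{e_{\Delta_x}},\norm{e_{\Delta_R}})$.

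Differentiating $\mathcal{V}$ along the closed loop, the disturbance terms $\Delta_x,\Delta_R$ telescope against the estimation-error derivatives by design, and the remaining expression is grouped into a quadratic form in the tracking errors. The coupling term $X$ and the residual cross terms are absorbed using Young's inequality, and the gains $k_x,k_v,k_R,k_\Omega$ together with $c_1,c_2$ are chosen so that $\dot{\mathcal{V}}$ is negative semidefinite, being negative definite in $(e_x,e_v,e_R,e_\Omega)$ but only bounded in the estimation errors. Stability in the sense of Lyapunov and uniform boundedness of $e_{\Delta_x},e_{\Delta_R}$ follow from $\mathcal{V}\ge0$, $\dot{\mathcal{V}}\le0$; asymptotic convergence of the tracking errors is obtained by LaSalle's invariance principle, or equivalently by Barbalat's lemma applied to the negative-definite part of $\dot{\mathcal{V}}$.

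The main obstacle is controlling the translational-rotational coupling through $X$: the bound $\norm{X}\lesssim\norm{e_R}$ holds only while $\Psi(R,R_c)<\psi_1<1$, so I would need a forward-invariance argument showing that the set \refeqn{Psi0} is never left, which in turn relies on the attitude loop pulling $\Psi$ down fast enough to offset the drift of $R_c$ induced by the time-varying commanded force $A$. A second delicate point is the saturation in the integral term: the estimation-error analysis must be split into saturated and unsaturated regimes, and one must verify that $\sigma$ is chosen large enough relative to $\norm{\Delta_x}$ that the estimator is not permanently saturated, since otherwise $e_{\Delta_x}$ cannot be driven toward the value that cancels $\Delta_x$.
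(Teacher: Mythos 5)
The paper does not actually contain a proof of this proposition; its ``proof'' is a citation to \cite{farhadASME15}, so your attempt can only be compared with the standard argument of that reference. Your outline is essentially that argument: the same rewriting of the translational loop with a force defect $X=(A\cdot Re_3)Re_3-A$ that vanishes on perfect thrust alignment, the same composite Lyapunov function built from $\frac{1}{2}k_x\norm{e_x}^2+\frac{1}{2}m\norm{e_v}^2$, $k_R\Psi+\frac{1}{2}e_\Omega\cdot Je_\Omega$, the cross terms $c_1 e_x\cdot e_v$ and $c_2 e_R\cdot Je_\Omega$, and quadratic penalties on the integral (disturbance-estimation) errors, followed by a negative-semidefinite $\dot{\mathcal V}$ and LaSalle/Barbalat. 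You also correctly name the two delicate points that the published proof spends most of its effort on: forward invariance of the sublevel set $\{\Psi<\psi_1\}$ and the saturated/unsaturated case split for the integral term.

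One step as written would fail. You claim that on $\{\Psi(R,R_c)<\psi_1\}$ one can bound $\norm{X}$ by a \emph{constant} multiple of $\norm{e_R}$. Since $X=-(I-Re_3(Re_3)^T)A$, the honest bound is $\norm{X}\le \norm{A}\cdot(\text{attitude-error factor})$, and $\norm{A}=\norm{-k_xe_x-k_ve_v-k_i\sat_\sigma(e_i)-mge_3+m\ddot x_d}$ grows affinely in $\norm{e_x}$ and $\norm{e_v}$; the attitude sublevel set does not bound the translational errors. The resulting coupling term in $\dot{\mathcal V}$ is therefore cubic, not quadratic, and cannot be absorbed by Young's inequality and gain tuning alone. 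This is exactly why the reference imposes a uniform bound on $\norm{-mge_3+m\ddot x_d}$ and an a priori restriction on the initial translational error (compare the $e_{x_{\max}}$ hypothesis of Proposition~\ref{prop:Pos2}), which yields a bound on $\norm{e_x}$, hence on $\norm{A}$, \emph{before} the quadratic-form argument is closed; your sketch needs that additional bootstrapping step. A smaller inaccuracy: the feedforward in \refeqn{M} does not cancel the gyroscopic terms exactly, so $J\dot e_\Omega$ retains a residual term quadratic in $(e_\Omega,R^TR_c\Omega_c)$; it is benign but must be carried through the estimate rather than declared zero.
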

\begin{proof}
See \cite{farhadASME15}.
\end{proof}
\begin{prop}{(Position Controlled Flight Mode with a Larger Initial Attitude Error)}\label{prop:Pos2}
Suppose that the initial conditions satisfy
\begin{gather}
1\leq \Psi(R(0),R_c(0)) < 2\label{eqn:eRb3},\quad \|e_x(0)\| < e_{x_{\max}},
\end{gather}
for a constant $e_{x_{\max}}$. Consider the control inputs $f,M$ defined in \refeqn{f}-\refeqn{M}, where the control parameters satisfy \refeqn{Psi0} for a positive constant $\psi_1<1$. Then the zero equilibrium of the tracking errors is attractive, i.e., $e_x,e_v,e_R,e_\Omega\rightarrow 0$ as $t\rightarrow\infty$. 
\end{prop}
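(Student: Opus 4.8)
The plan is to reduce this large-attitude-error case to Proposition \ref{prop:Pos} by showing that the attitude error function $\Psi(R,R_c)$ strictly decreases and falls below the threshold $\psi_1$ in finite time, after which the hypotheses of Proposition \ref{prop:Pos} are met and asymptotic convergence follows. The starting point is the observation that the sublevel set $\{\Psi < 2\}$ is precisely the region on which the attitude error vector $e_R$ of \refeqn{errr} is well defined and on which the attitude control \refeqn{M} admits a valid Lyapunov analysis; the strict initial inequality $\Psi(R(0),R_c(0)) < 2$ keeps the trajectory away from the antipodal configuration where $e_R$ degenerates.

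First I would construct a Lyapunov function for the attitude error subsystem of the form $\mathcal{V}_R = \frac{1}{2} e_\Omega \cdot J e_\Omega + k_R \Psi(R,R_c) + (\text{cross term in } e_R, e_\Omega)$, and differentiate it along the closed-loop attitude dynamics \refeqn{EL3}--\refeqn{EL4} driven by the control moment \refeqn{M}. Using the standard bounds relating $\Psi$, $\|e_R\|$, and $\|e_\Omega\|$ valid on $\{\Psi < \Psi(R(0),R_c(0))\}\subset\{\Psi < 2\}$, I expect $\dot{\mathcal{V}}_R$ to be negative definite in $(e_R,e_\Omega)$ on this set. Since $\mathcal{V}_R$ dominates $k_R\Psi$, its sublevel sets confine $\Psi$ strictly below $2$ and are rendered invariant, while the negative definiteness forces $\Psi(R(t),R_c(t))\to 0$, in fact at an exponential rate; hence there exists a finite time $t_1$ with $\Psi(R(t_1),R_c(t_1)) < \psi_1$.

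The coupling with the translational dynamics is where the main obstacle lies. The computed attitude $R_c$ and its angular velocity $\Omega_c,\dot\Omega_c$ in \refeqn{RdWc} depend, through $b_{3_c}$ in \refeqn{Rd3}, on the translational errors $e_x,e_v,e_i$, so the feedforward terms in \refeqn{M} are driven by the translational state; moreover, while $\Psi$ is large the actual thrust direction $Re_3$ may point far from the commanded $b_{3_c}$, degrading position tracking during the transient. The key step is therefore to show that the translational errors cannot escape in finite time: starting from $\|e_x(0)\| < e_{x_{\max}}$, a Gronwall-type estimate on \refeqn{EL1}--\refeqn{EL2}, exploiting boundedness of the saturated integral term $\sat_\sigma(e_i)$ and of the thrust magnitude \refeqn{f}, would bound $\|e_x(t)\|,\|e_v(t)\|$ uniformly over the finite interval $[0,t_1]$. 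This in turn keeps $b_{3_c}$ (a unit vector by construction) and the computed angular velocity $\Omega_c$ bounded, so that the attitude decay established in the previous step is not destroyed by the feedforward.

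Finally, at time $t_1$ the configuration satisfies $\Psi(R(t_1),R_c(t_1)) < \psi_1 < 1$ with $e_x,e_v,e_i$ bounded, so Proposition \ref{prop:Pos} applies with $t_1$ as a new initial time, yielding $e_x,e_v,e_R,e_\Omega\to 0$ as $t\to\infty$. I anticipate the delicate part to be the uniform boundedness estimate over the transient $[0,t_1]$ and, relatedly, verifying that the exponential attitude-decay rate is genuinely independent of the (possibly large but bounded) translational errors, since $\Omega_c$ and $\dot\Omega_c$ enter the derivative of $\mathcal{V}_R$.
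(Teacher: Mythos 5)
The paper itself gives no proof of Proposition~\ref{prop:Pos2} beyond citing \cite{farhadASME15}, and your proposal reproduces essentially the argument used there (and in the earlier geometric-control literature it builds on): an attitude-only Lyapunov analysis showing $\Psi$ decays below $\psi_1$ in finite time $t_1$, a Gronwall-type bound keeping the translational errors finite on $[0,t_1]$, and then an application of Proposition~\ref{prop:Pos} with $t_1$ as the new initial time. The circularity you flag at the end is in fact benign, because the feedforward terms in \refeqn{M} cancel $\Omega_c$ and $\dot\Omega_c$ exactly from the closed-loop attitude error dynamics, so the decay rate of $(e_R,e_\Omega)$ does not depend on the translational state.
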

\begin{proof}
See \cite{farhadASME15}.
\end{proof}
\subsection{Direction of the First Body-Fixed Axis}\label{sec:b1c}
As described above, the construction of the orthogonal matrix $R_c$ involves having its third column $b_{3_c}$ specified by \refeqn{Rd3}, and its first column $b_{1_c}$ is arbitrarily chosen to be orthogonal to the third column, which corresponds to a one-dimensional degree of choice. By choosing $b_{1_c}$ properly, we constrain the asymptotic direction of the first body-fixed axis. This can be used to specify the heading direction of a quadrotor UAV in the horizontal plane~\cite{LeeLeoAJC13}.


\section{Extended Kalman Filter (EKF)}\label{sec:Kalman}
The Kalman Filter is an algorithm which utilizes a set of measurements observed over time while considering statistical noise, and generates estimates of uncertain variables that are more accurate and precise than those variables which are based on a single measurement. It contains of two general steps, prediction (or flow update), and the measurement update. It uses the prior knowledge of state along with the measurements to predict and update the state variables. The EKF is motivated by linearizing the nonlinear system considering the control input as the closed loop system. In the following, we presented the nonlinear equations used in the EKF derivations, along with a precise linearization on $\SE$, followed by the derivations for EKF implementation on the system.

\subsection{Flow Update}
The equations of motion, given by \refeqn{EL1}-\refeqn{EL4}, can be rearranged as
\begin{align}\label{eqn:NLEM}
\dot{\chi}=f(\chi,u)+w,
\end{align}
where the state vector $\chi\in\Re^{24\times1}$ is given by
\begin{align}
\chi=[x,v,R,\Omega,e_{i1},e_{i1}]^{T},
\end{align}
and the process noise is denoted by $w\in\Re^{24}$. Assume the covariance of the process noise is $Q=\text{E}[ww^{T}]\in\Re^{24\times 24}$. In the above nonlinear equation of motion, the translational and rotational dynamics are coupled as rotation matrix $R$ and angular velocity $\Omega$ directly appear in the dynamics of the rigid body.

Let the measurement $z\in\Re^{24}$ be a nonlinear function of state
\begin{align}
z=h(\chi)+v,
\end{align}
where $v\in\Re^p$ denotes measurement noise with covariance $\mathcal R=\text{E}[vv^{T}]$.

Next, we show the linearized equations of motion. The key idea is to represent the infinitesimal variation of $R\in\SO$ using the exponential map
\begin{align}
\delta R = \frac{d}{d\epsilon}\bigg|_{\epsilon = 0} R \exp (\epsilon \hat\eta) = R\hat\eta,\label{eqn:delR}
\end{align}
for $\eta\in\Re^3$. The unit vector $\frac{\eta}{\|\eta\|}$ corresponds to the axis of the rotation, expressed in the body-fixed frame, and $\norm{\eta}$ is the rotation angle. 

Using this, we linearize the controlled equation of motion. 
\begin{prop}\label{prop:linearizedcontroled}
Consider the controlled dynamical model presented in~\refeqn{EL1}, \refeqn{EL2}, \refeqn{EL3}, and \refeqn{EL4}, along with the control inputs presented in~\refeqn{f} and \refeqn{M}. The linearized equations of motion for the closed loop dynamics are given by 
\begin{align}\label{eqn:EOMLin}
\delta\dot{\xb}=A_{L}\delta\xb,
\end{align}
where $\delta$ represents an infinitesimal change of the state vector $\xb\in\Re^{18\times1}$ given by
\begin{align}
\xb=\begin{bmatrix}
{x},v,\eta,\Omega,{e}_{i1},{e}_{i2}
\end{bmatrix}^{T},\label{eqn:xb}
\end{align}
and $A_{L}\in\Re^{18\times 18}$ is given in details in the appendix.
\end{prop}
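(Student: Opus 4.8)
The plan is to compute the first-order variation of each governing equation, representing the perturbation of the attitude through the exponential-map parameterization $\delta R=R\hat\eta$ introduced in \refeqn{delR}, while the perturbations of the remaining states are ordinary vectors $\delta x,\delta v,\delta\Omega,\delta e_{i1},\delta e_{i2}\in\Re^3$. Two rows linearize immediately: \refeqn{EL1} gives $\delta\dot x=\delta v$, and the integral definitions give $\delta\dot e_{i1}=\delta e_v+c_1\delta e_x$ and $\delta\dot e_{i2}=\delta e_\Omega+c_2\delta e_R$. For the attitude kinematics \refeqn{EL3} I would equate the two expressions for $\delta\dot R$: differentiating $\delta R=R\hat\eta$ gives $\delta\dot R=R\hat\Omega\hat\eta+R\hat{\dot\eta}$, while varying \refeqn{EL3} gives $\delta\dot R=R\hat\eta\hat\Omega+R\,\widehat{\delta\Omega}$. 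Cancelling $R$ and using the Lie-bracket identity $\hat\Omega\hat\eta-\hat\eta\hat\Omega=\widehat{\Omega\times\eta}$ then yields the kinematic relation $\dot\eta=\delta\Omega-\hat\Omega\eta$.

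Next I would linearize the two dynamic equations, reducing the problem to computing $\delta f$ and $\delta M$. Varying \refeqn{EL2} with $\Delta_x$ fixed gives $m\,\delta\dot v=-\delta f\,Re_3-fR\hat\eta e_3=-\delta f\,Re_3+fR\hat e_3\eta$, since $\delta(Re_3)=R\hat\eta e_3=-R\hat e_3\eta$. Varying \refeqn{EL4} with $\Delta_R$ fixed and using $\delta\Omega\times J\Omega=-\widehat{J\Omega}\,\delta\Omega$ gives $J\,\delta\dot\Omega=\delta M+(\widehat{J\Omega}-\hat\Omega J)\delta\Omega$. Both right-hand sides are now linear in the state perturbations apart from the control variations, which carry all the remaining structure.

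The core of the proof is the linearization of the control inputs \refeqn{f} and \refeqn{M}, and this is where the translational--rotational coupling enters. Writing the bracketed vector in \refeqn{f} as $A=k_xe_x+k_ve_v+k_i\sat_\sigma(e_i)+mge_3-m\ddot x_d$, so that $f=A\cdot Re_3$, I would compute $\delta f=(\delta A)^T Re_3+A^T R\hat\eta e_3$, where $\delta A=k_x\delta x+k_v\delta v+k_i\,\delta\sat_\sigma(e_i)$; the saturation contributes its diagonal Jacobian at $e_i$, which reduces to $k_i\,\delta e_{i1}$ in the unsaturated regime. For $\delta M$ I would need the variations of the error vectors: from \refeqn{eomega}, $\delta e_\Omega=\delta\Omega+\hat\eta R^TR_c\Omega_c$ plus terms arising from $\delta R_c,\delta\Omega_c$; from \refeqn{errr}, $\delta e_R$ follows by varying $e_R=\tfrac12(R_c^TR-R^TR_c)^\vee$ with $\delta R=R\hat\eta$ and applying the identity $(B\hat\eta+\hat\eta B^T)^\vee=(\tr{B}I-B^T)\eta$ with $B=R_c^TR$, giving the coefficient of $\eta$ as $\tfrac12(\tr{R^TR_c}I-R^TR_c)$. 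The genuinely new and hardest ingredient is that in the position-controlled mode the commanded attitude $R_c$ is itself a function of the translational errors through \refeqn{Rd3}: since $b_{3_c}=A/\norm{A}$, perturbing gives $\delta b_{3_c}=\tfrac{1}{\norm{A}}(I-b_{3_c}b_{3_c}^T)\,\delta A$, which I would convert into an infinitesimal rotation $\delta R_c=R_c\hat\eta_c$ with $\eta_c$ a linear map of $(\delta x,\delta v,\delta e_{i1})$, and likewise differentiate $\hat\Omega_c=R_c^T\dot R_c$ from \refeqn{RdWc} to obtain $\delta\Omega_c$. Substituting these into $\delta e_R$, $\delta e_\Omega$, and the feedforward terms of \refeqn{M} produces the off-diagonal blocks that couple the rotational dynamics to $\delta x,\delta v,\delta e_{i1}$. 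I expect this propagation of the translational perturbation through $R_c$ and $\Omega_c$---in particular the projection operator $(I-b_{3_c}b_{3_c}^T)$ and the passage from $\delta b_{3_c}$ to $\eta_c$---to be the main obstacle, together with keeping the saturation Jacobian and the $e_R$ identity in coordinate-free form.

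Finally I would assemble the results. Stacking the perturbations in the order of \refeqn{xb} as $\delta\xb=[\delta x;\,\delta v;\,\eta;\,\delta\Omega;\,\delta e_{i1};\,\delta e_{i2}]$ and collecting the coefficients obtained above---$\delta\dot x=\delta v$, the $m^{-1}$- and $J^{-1}$-scaled dynamic rows, the kinematic row $\dot\eta=\delta\Omega-\hat\Omega\eta$, and the two integral rows---into a single $18\times18$ array gives $\delta\dot\xb=A_L\,\delta\xb$, whose explicit blocks are recorded in the appendix. This establishes \refeqn{EOMLin} and completes the proof.
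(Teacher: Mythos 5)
Your proposal follows essentially the same route as the paper's appendix: variation of each governing equation with the attitude perturbation written as $\delta R = R\hat\eta$, the commutator identity yielding $\dot\eta = \delta\Omega - \hat\Omega\eta$, linearization of $f$ and $M$ through the variations of $e_R$, $e_\Omega$, and, crucially, the propagation of translational perturbations through $\delta b_{3_c}$ (your projection $(I-b_{3_c}b_{3_c}^T)\delta A/\norm{A}$ is exactly the paper's $b_{3_c}\times(b_{3_c}\times \delta A/\norm{A})$ up to its sign convention for $A$), followed by extraction of $\eta_c=(R_c^T\delta R_c)^\vee$ and $\delta\Omega_c$ and assembly into the $18\times 18$ block matrix $A_L$. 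This matches the paper's proof; no substantive difference in approach.
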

\begin{proof}
See Appendix.
\end{proof}

Next, we describe the proposed extended Kalman filter.  Let $\bar{\;}$ denote an estimate of state and assume that the mean of the initial state are given by $\bar{\chi}_0$. The initial state for the linearized system, namely $\bar{\xb}_0$ can be obtained from \refeqn{delR}, and suppose the variance of $\bar{\xb}_0$ is given by $\mathcal{P}_0\in\Re^{18\times 18}$.


The extended Kalman filter is formulated in the discrete-time setting, and let subscript $k$ denote the value of a variable at the $k$-th time step. Also, superscripts $-$ and $+$ mean the a priori (before measurements) and a posteriori (after measurements) variables,  respectively.

Runge-Kutta method~\cite{RungeKutta87} is utilized for \refeqn{NLEM} to propagate the mean from the current a posteriori estimate $\bar{\chi}_{k}^+$ to the next a priori estimate $\bar{\chi}_{k+1}^-$. The a priori state covariance uses the linearized matrix $A_{L_k}$,
\begin{align}
\mathcal{P}_{k+1}^- = A_{L_k} \mathcal{P}_k^+ A_{L_k}^T + \mathcal{Q}_k,\label{eqn:Pkpm}
\end{align}
where $A_{L_k}$ serves to reshape the covariance and $\mathcal{Q}_k$ is the added uncertainty along the propagation.

\subsection{Measurement Update}
The measurement matrix is linearized with respect to the state to obtain
\begin{align}
H_{k+1}=\frac{\partial h}{\partial \chi}\bigg|_{x=\bar{\chi}_{k+1}^-}\in\Re^{24\times 18},
\end{align}
where the derivative with respect to $R$ is taken by using \refeqn{delR}.

The predicted knowledge $(\bar{\chi}_{k+1}^-,\mathcal{P}_{k+1}^-)$ is converted into the reduced state $(\bar{\xb}_{k+1}^-,\mathcal{P}_{k+1}^-)$ from \refeqn{delR} and \refeqn{xb}. Then, the measurement update provides a correction to obtain an \textit{a posteriori} estimate $(\bar{\xb}_{k+1}^+,\mathcal{P}_{k+1}^+)$:
\begin{gather}
\mathcal{K}_{k+1} = \mathcal{P}_{k+1}^- H_{k+1}^T(H_{k+1} \mathcal{P}_{k+1}^- H_{k+1}^T + \mathcal{R}_{k+1})^{-1}\label{eqn:K}\\
\bar \xb^+_{k+1} = \bar \xb^-_{k+1} + \mathcal{K}_{k+1} ( \zb_{k+1}-h(\bar \xb^-_{k+1})),\label{eqn:xhatp}\\
\mathcal{P}^+_{k+1} = (I-\mathcal{K}_{k+1} H_{k+1} )\mathcal{P}^{-}_{k+1},\label{eqn:Pp}
\end{gather}
where the updated state $\bar \xb^+_{k+1}$ takes both the predicted process and measurement into account and the matrix $\mathcal{K}_{k+1}$ is referred to as \textit{Kalman gain} at the $(k+1)$-th time step. Also, $\zb_{k+1}$ is the measurement vector obtained from the reference attitude using Eq.~\refeqn{delR}. The key idea of implementing the attitude in the measurement update, is that the difference between the predicted attitude and the measured attitude is given by $\eta$ in Eq.~\refeqn{xhatp} for $\zb_{k+1}$. Finally, the $\bar \chi_{k+1}^+$ is obtained from $\bar \xb^+_{k+1}$ using Eq.~\refeqn{delR}.


\section{Numerical Simulations}\label{sec:NS}
We demonstrate the desirable properties of the proposed extended kalman filter with two numerical examples. Consider the geometric nonlinear control system designed in previous section, we need to provide the controller with state variables namely, position, velocity, attitude and angular velocity of the quadrotor. We implement the extended kalman filter to estimate the un-measured states and also improve the noisy measurements from sensors. For both examples, properties of the quadrotor are chosen as
\begin{gather}
m=0.755\,\mathrm{kg},\; J=\mathrm{diag}[0.557,\,0.557,\,1.05]\times 10^{-2}\,\mathrm{kgm^2},\nonumber
\end{gather}
and controller parameters are selected as follows: $k_x=13.84$, $k_v=4.84$, ${k_R}=0.67$, ${k_\Omega}= 0.11$, $k_{I}=0.01$, $B_{5}=B_{6}=0.1$. 
The following two fixed disturbances are included in this numerical example.
\begin{gather*}
\Delta_R=[0.01,-0.02,0.01]^T,\\
\Delta_x=[-0.02,0.01,-0.03]^T.
\end{gather*}
The initial conditions for the quadrotor are given by $x(0)=0_{3\times 1}$, $\dot{x}(0)=0_{3\times 1}$, $R(0)=I_{3\times 3}$, and $\Omega(0)=0_{3\times 1}$. Initial estimates of the attitude and angular velocity are given by $\bar{R}(0)=I_{3\times 3}$ and $\bar{\Omega}(0)=[0.1,-0.2,0.1]^{T}$ respectively for both examples.
\subsection{\textbf{Example 1- Estimating the translational velocity with large initial error and high-noisy measurements}}
In this example, we investigate a case where noisy measurements on position, attitude and angular velocity are available via sensors and we wish to estimate the translational velocity while improving the noisy measurements. The measurements noise covariance is chosen as $\mathcal{R}=1.0$ with process noise of $\mathcal{Q}=0.01$. Also the initial estimation values are chosen as $\bar{x}(0)=\bar{v}(0)=[4,4,-3]^{T}$ to have large initial estimation errors. Desired trajectory is chosen to be a Lissajous curve as
\begin{gather*}
x_d(t) =[\sin(t)+\frac{\pi}{2},\; \sin(2t),\; -0.5]^{T},\;b_{1d}=[1,\; 0,\; 0]^{T}.
\end{gather*}
and geometric nonlinear controller is employed along with proposed EKF to track the above desired trajectory. The observation matrix, $H\in\Re^{9\times18}$ utilized in this example is 
\begin{align*}
H=\begin{bmatrix}
I_{3}&0&0&0&0&0\\
0&0&I_{3}&0&0&0\\
0&0&0&I_{3}&0&0
\end{bmatrix}.
\end{align*}

\begin{figure}[h]
\centerline{
	        \subfigure[Quadrotor position, $x$, $\hat{x}$]{
		\includegraphics[width=0.5\columnwidth]{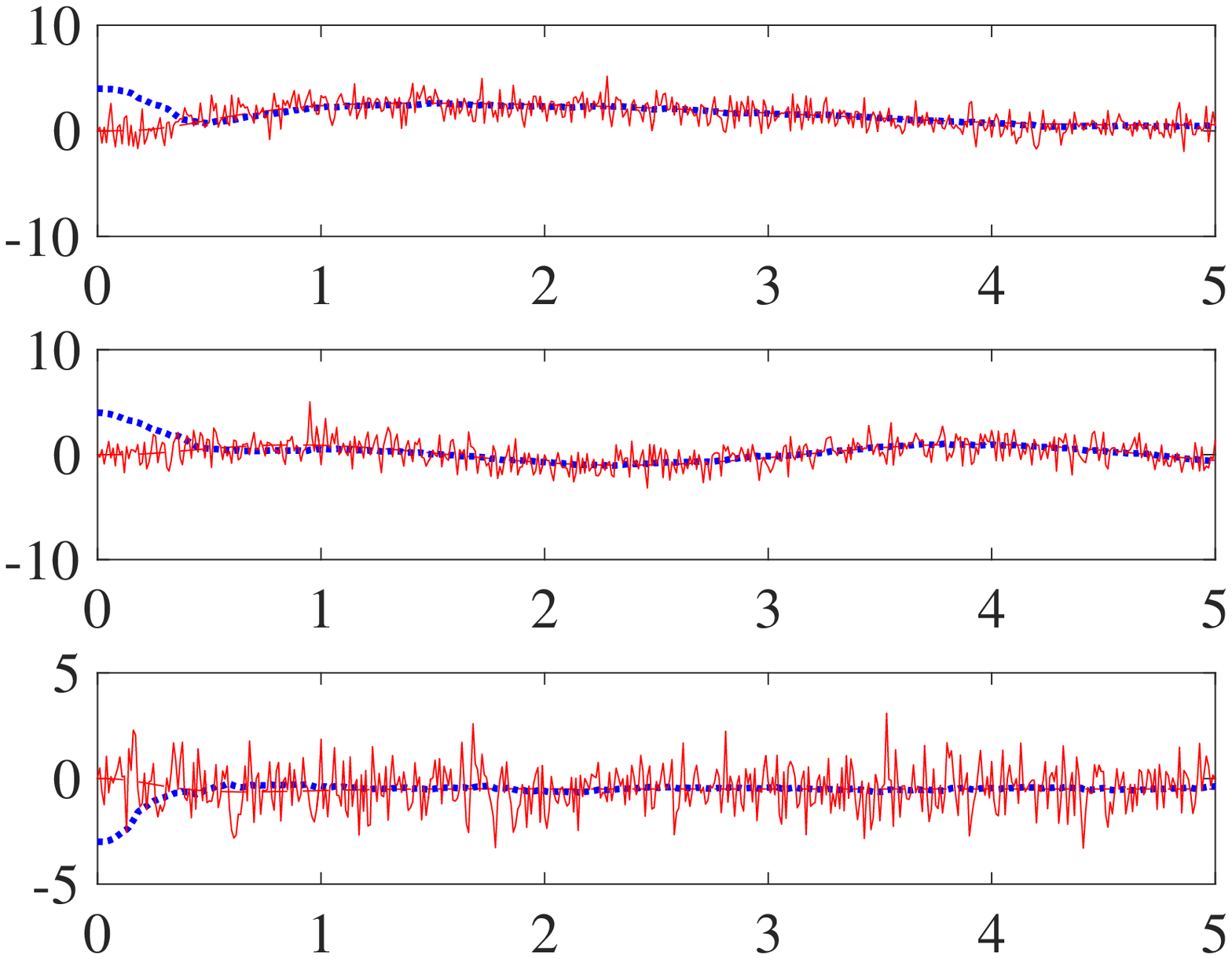}\label{fig:sim_x4}}
		\subfigure[$\Omega_{1}$, $\hat{\Omega}_{1}$]{
		\includegraphics[width=0.5\columnwidth]{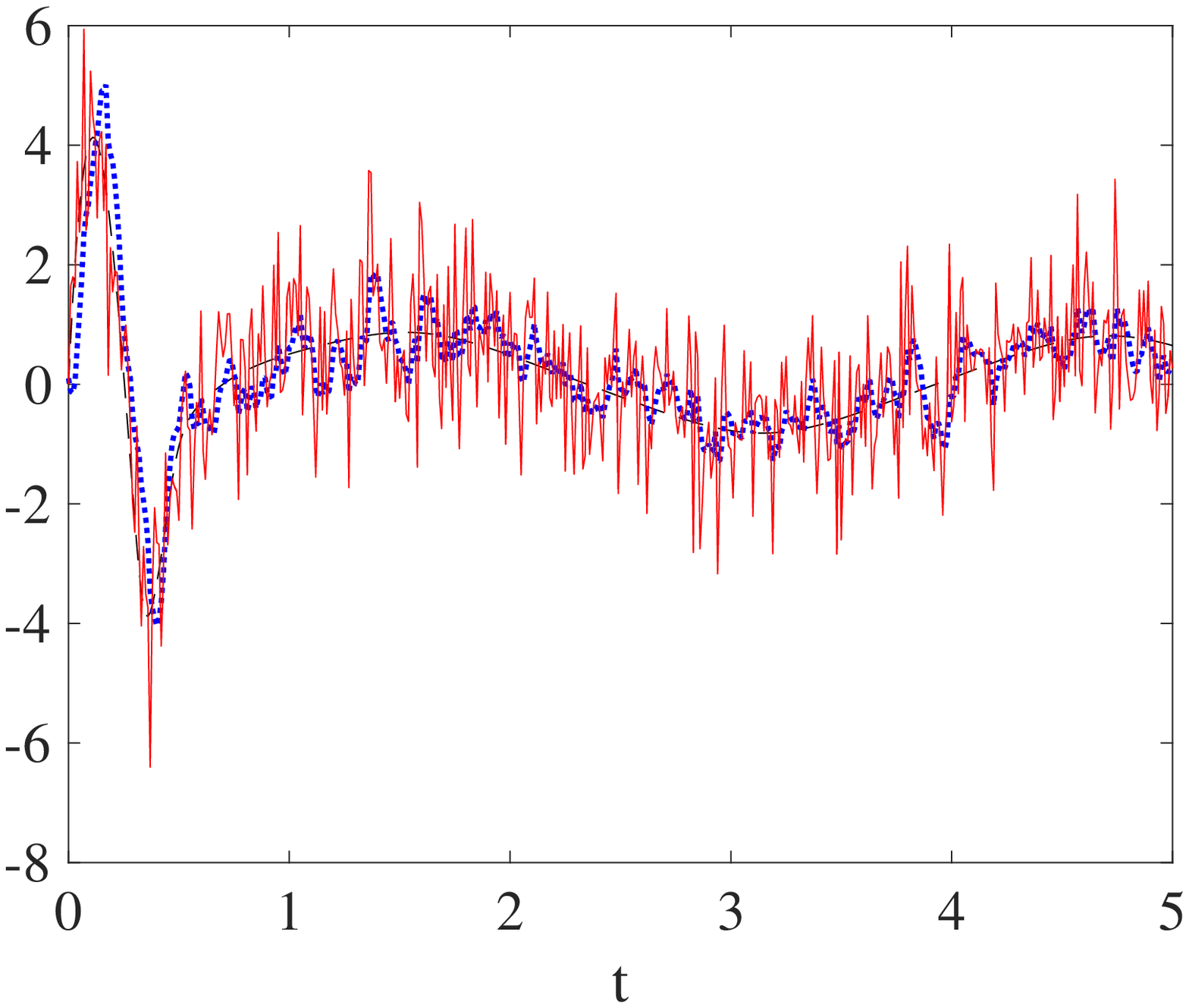}\label{fig:sim_W14}}}\centerline{
	        \subfigure[Quadrotor velocity, $v$, $\hat{v}$]{
		\includegraphics[width=0.5\columnwidth]{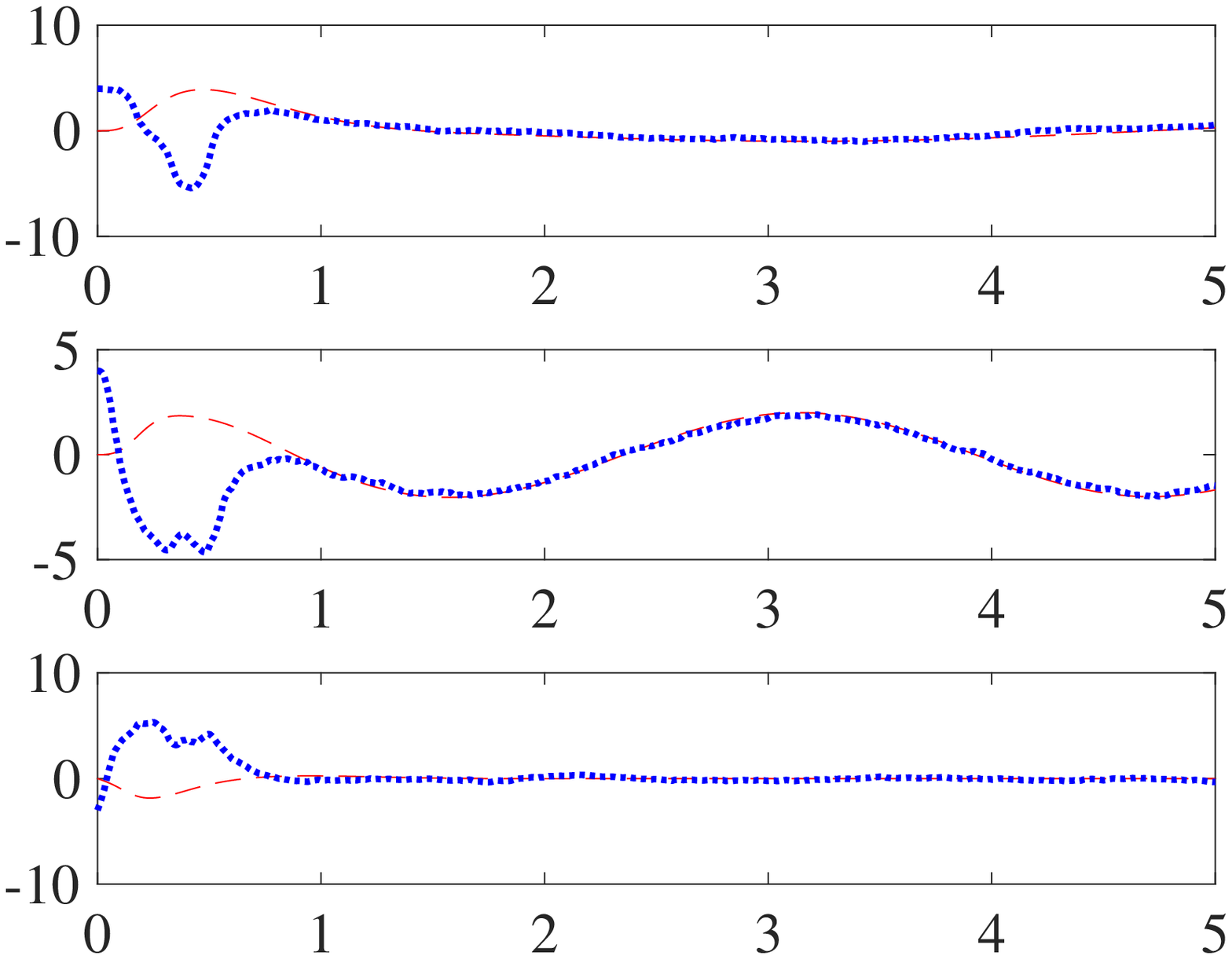}\label{fig:sim_v4}}
		\subfigure[$\Omega_{2}$, $\hat{\Omega}_{2}$]{
		\includegraphics[width=0.5\columnwidth]{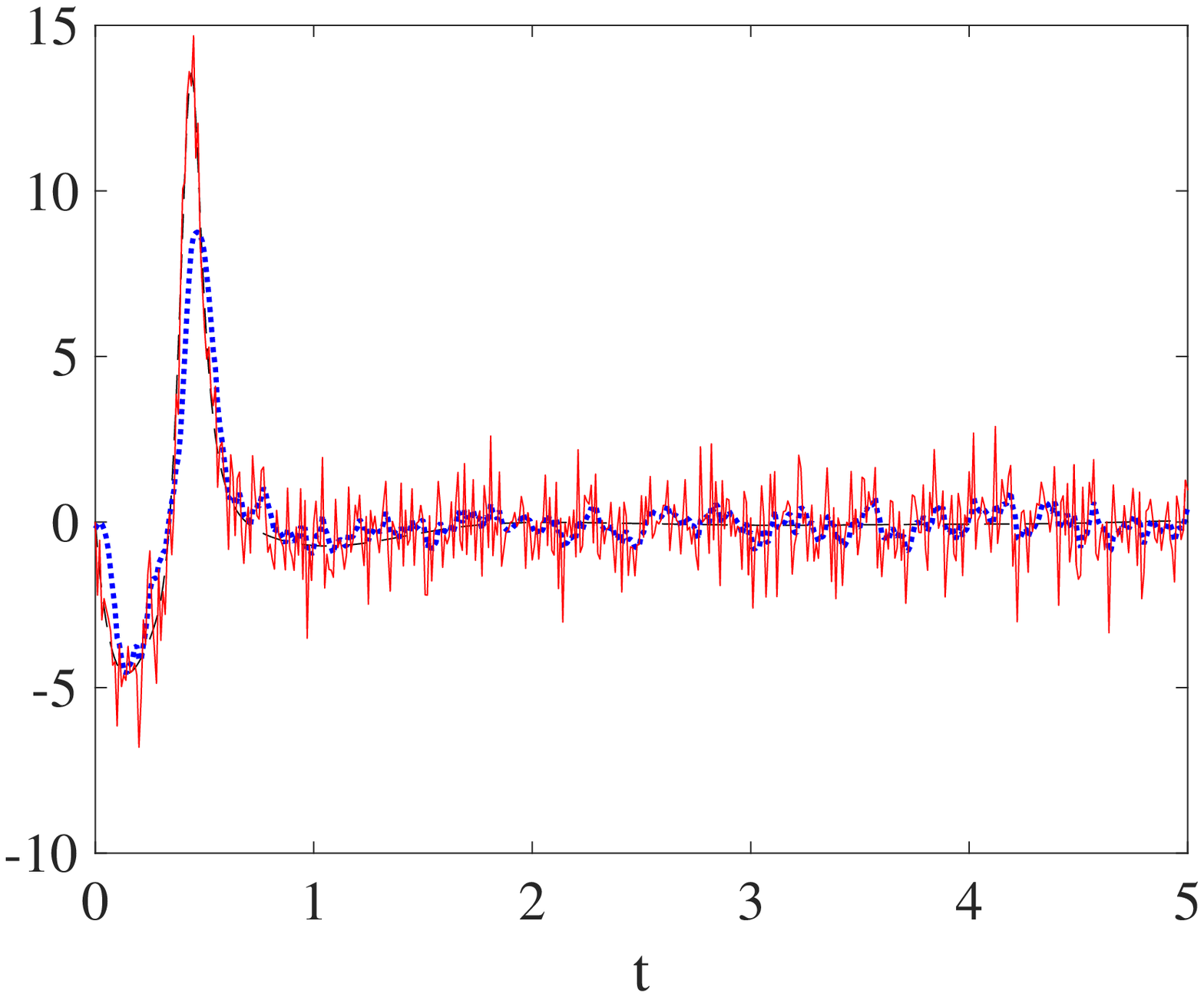}\label{fig:sim_W24}}}\centerline{
		\subfigure[3D view, $x$, $\hat{x}$]{
		\includegraphics[width=0.5\columnwidth]{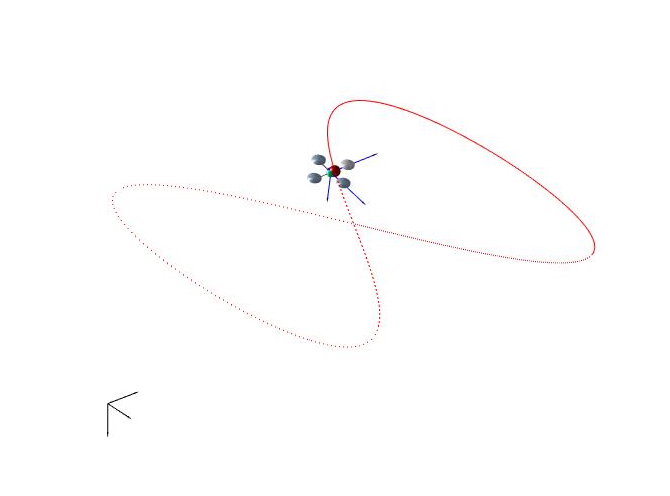}\label{fig:eq4}}			
		 \subfigure[$\Omega_{3}$, $\hat{\Omega}_{3}$]{
		\includegraphics[width=0.5\columnwidth]{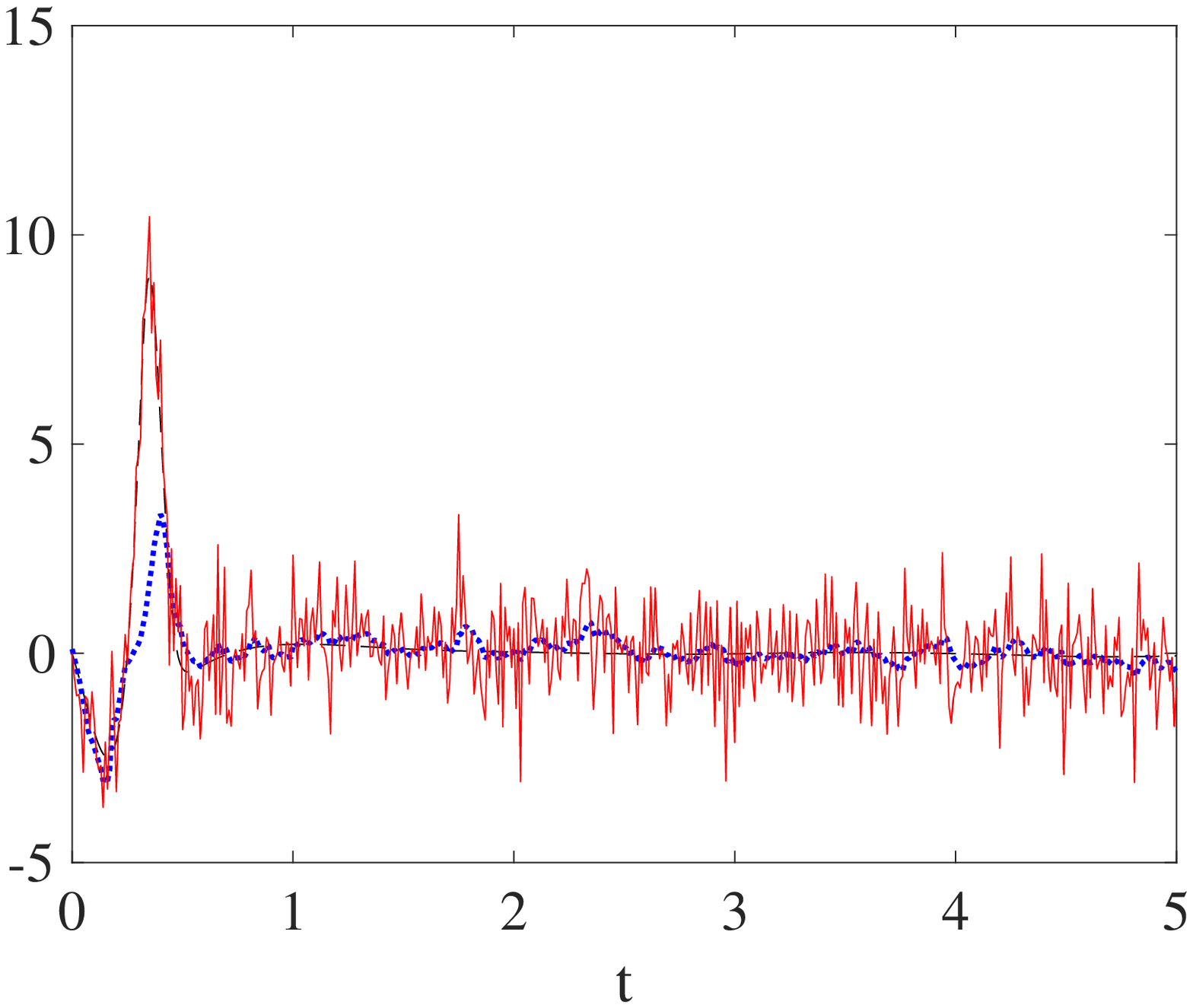}\label{fig:sim_W34}}}\centerline{
		\subfigure[Position error]{
		\includegraphics[width=0.5\columnwidth]{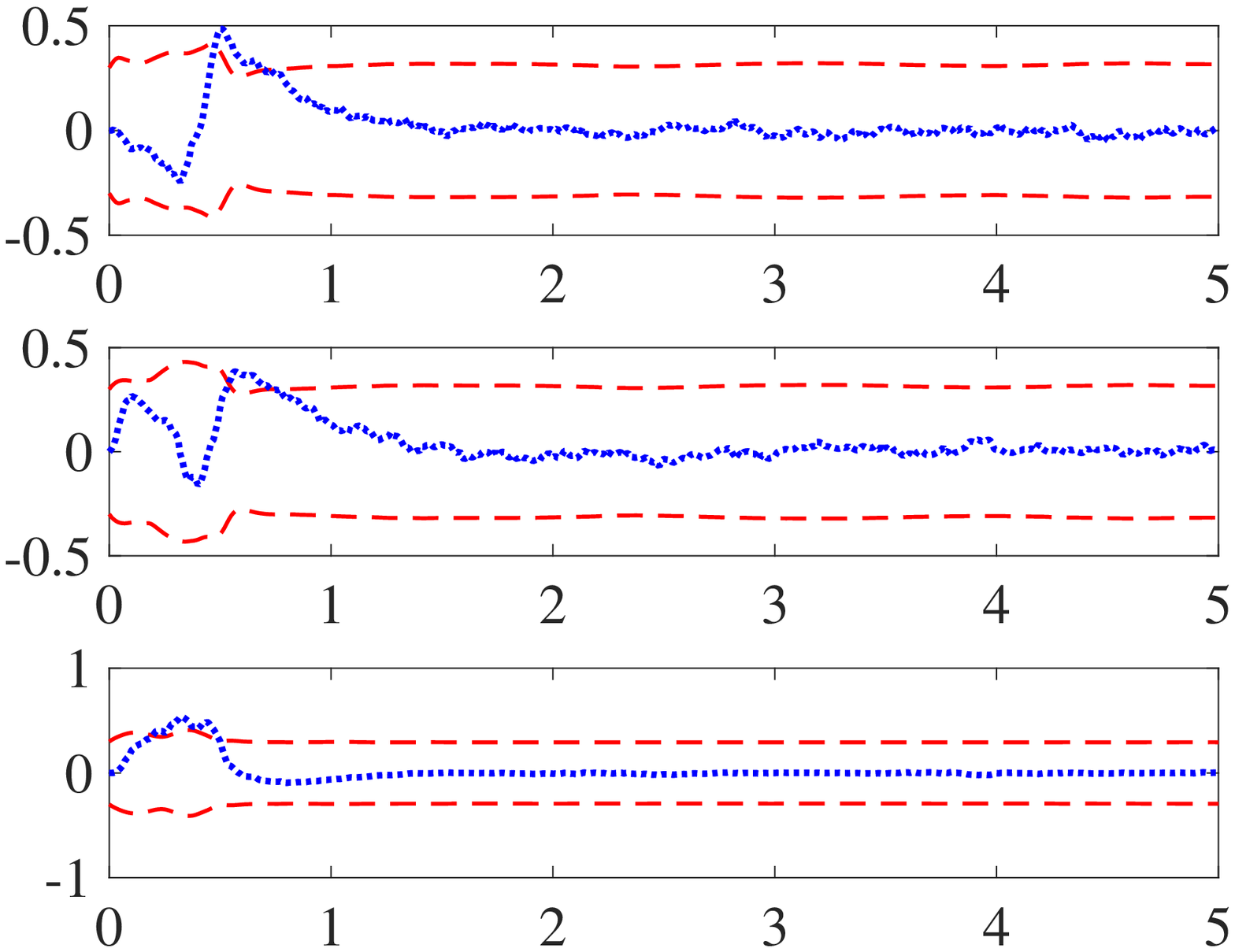}\label{fig:errorsx4}}
		\subfigure[Velocity error]{
		\includegraphics[width=0.5\columnwidth]{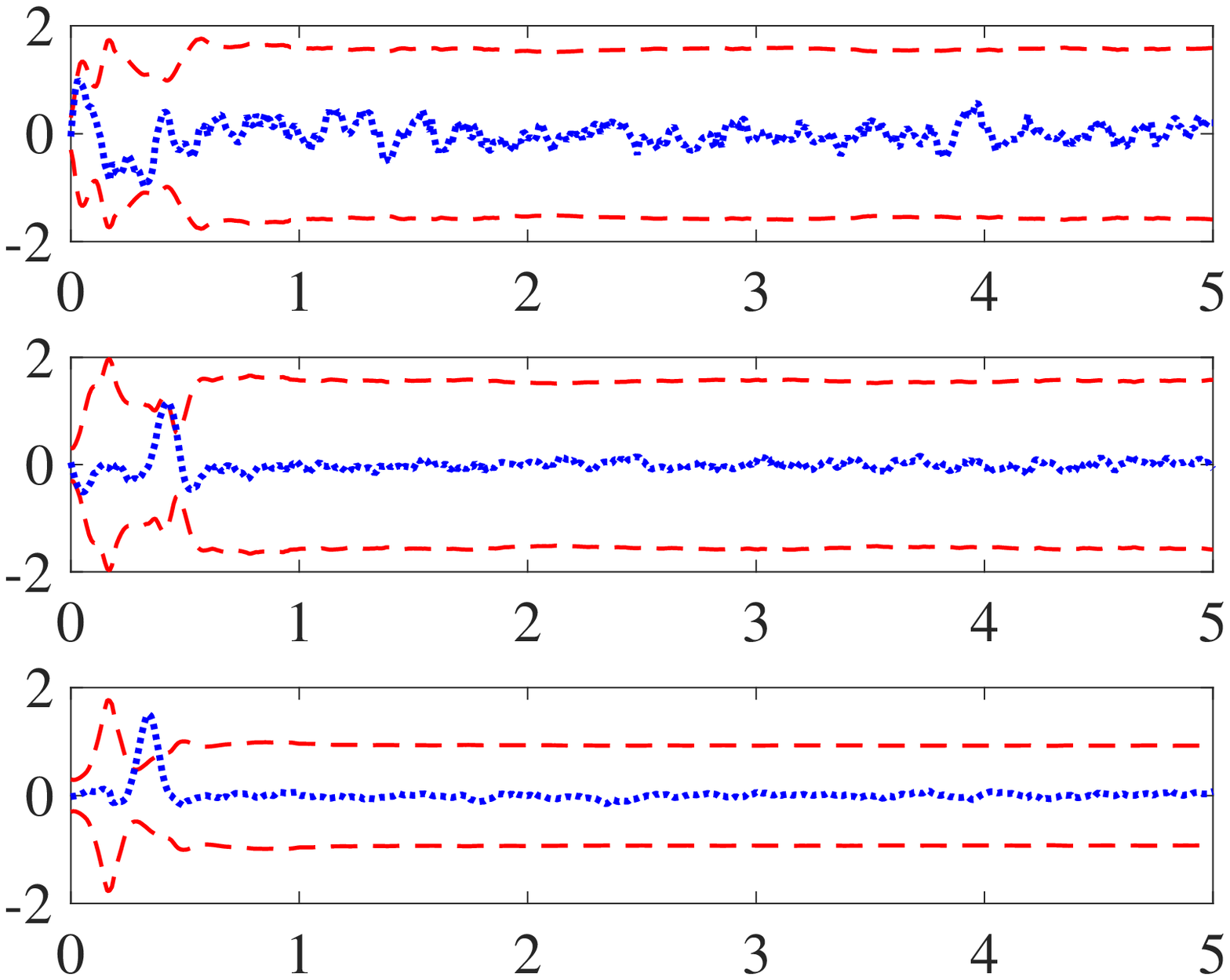}\label{fig:errorsv4}}
}
\caption{EKF performance in the first numerical example (dotted: EKF, dashed: desired, solid: noisy measurements). A short animation is also available at  \href{https://youtu.be/F4Vntws97RU}{https://youtu.be/F4Vntws97RU}}\label{fig:sim1}
\end{figure}

Figure \ref{fig:sim1} illustrates the performance of the proposed controller and EKF in this numerical simulation. Estimated position, $\bar{x}$ and angular velocity, $\Omega$ of the quadrotor is presented along with the noisy measurements at Figures \ref{fig:sim_x4}, \ref{fig:sim_W14}, \ref{fig:sim_W24}, and \ref{fig:sim_W34} respectively. Desired translational velocity and the estimated velocity outputted from proposed EKF is plotted at Figure \ref{fig:sim_v4} which shows s satisfactory estimate during the maneuver. Figures \ref{fig:errorsx4} and \ref{fig:errorsv4} are the position and velocity estimation errors calculated from the following expressions.
\begin{gather*}
\bar{e}_{x}=\bar{x}-x_{d},\;\bar{e}_{v}=\bar{v}-v_{d}.
\end{gather*}
\begin{figure}[h]
\centerline{
	        \subfigure[$R_{11}$]{
		\includegraphics[width=0.33\columnwidth]{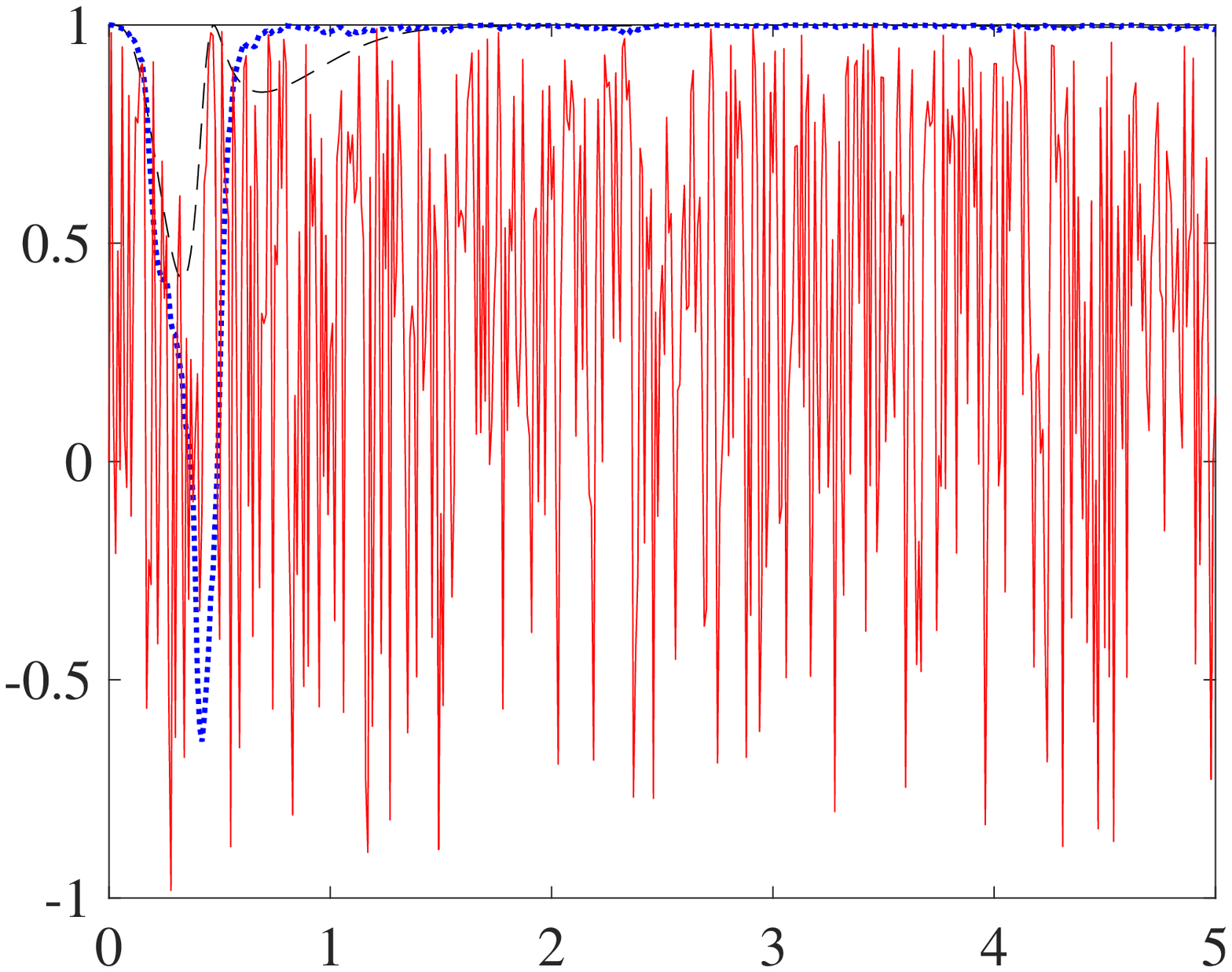}\label{fig:sim_R11_4}}\hspace{-0.3cm}
		\subfigure[$R_{12}$]{
		\includegraphics[width=0.33\columnwidth]{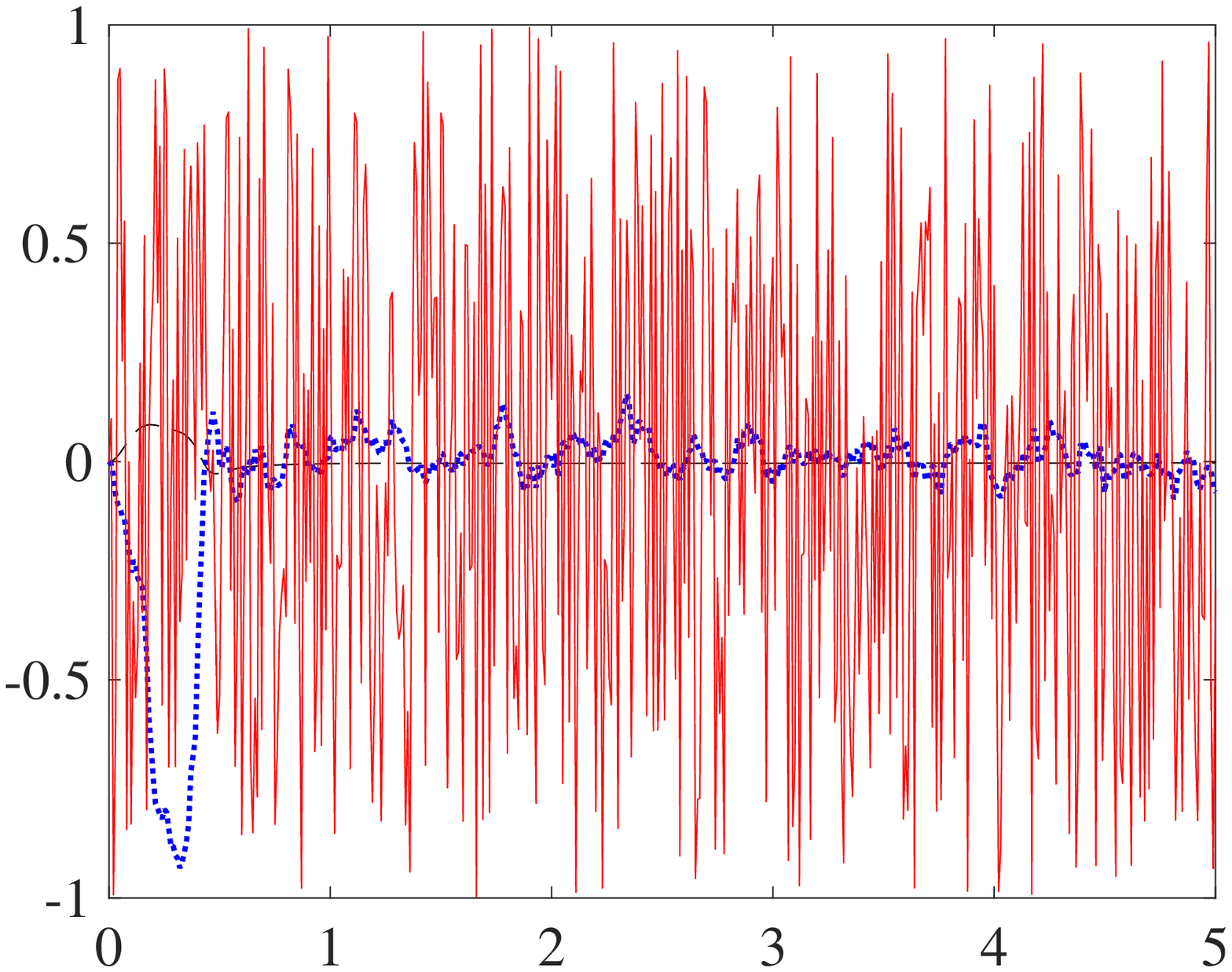}\label{fig:sim_R11_4}}\hspace{-0.3cm}
		\subfigure[$R_{13}$]{
		\includegraphics[width=0.33\columnwidth]{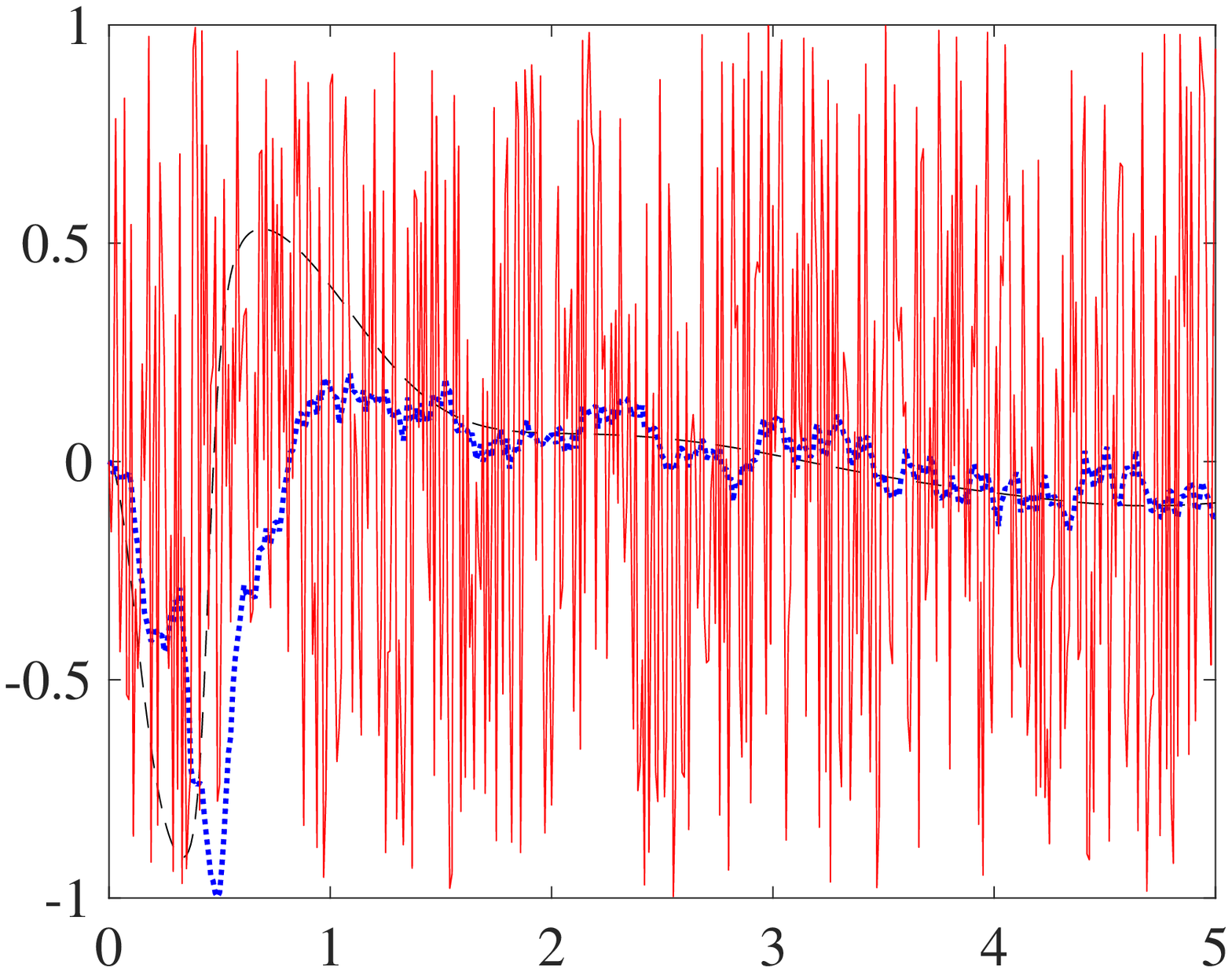}\label{fig:sim_R11_4}}}\centerline{
	        \subfigure[$R_{21}$]{
		\includegraphics[width=0.33\columnwidth]{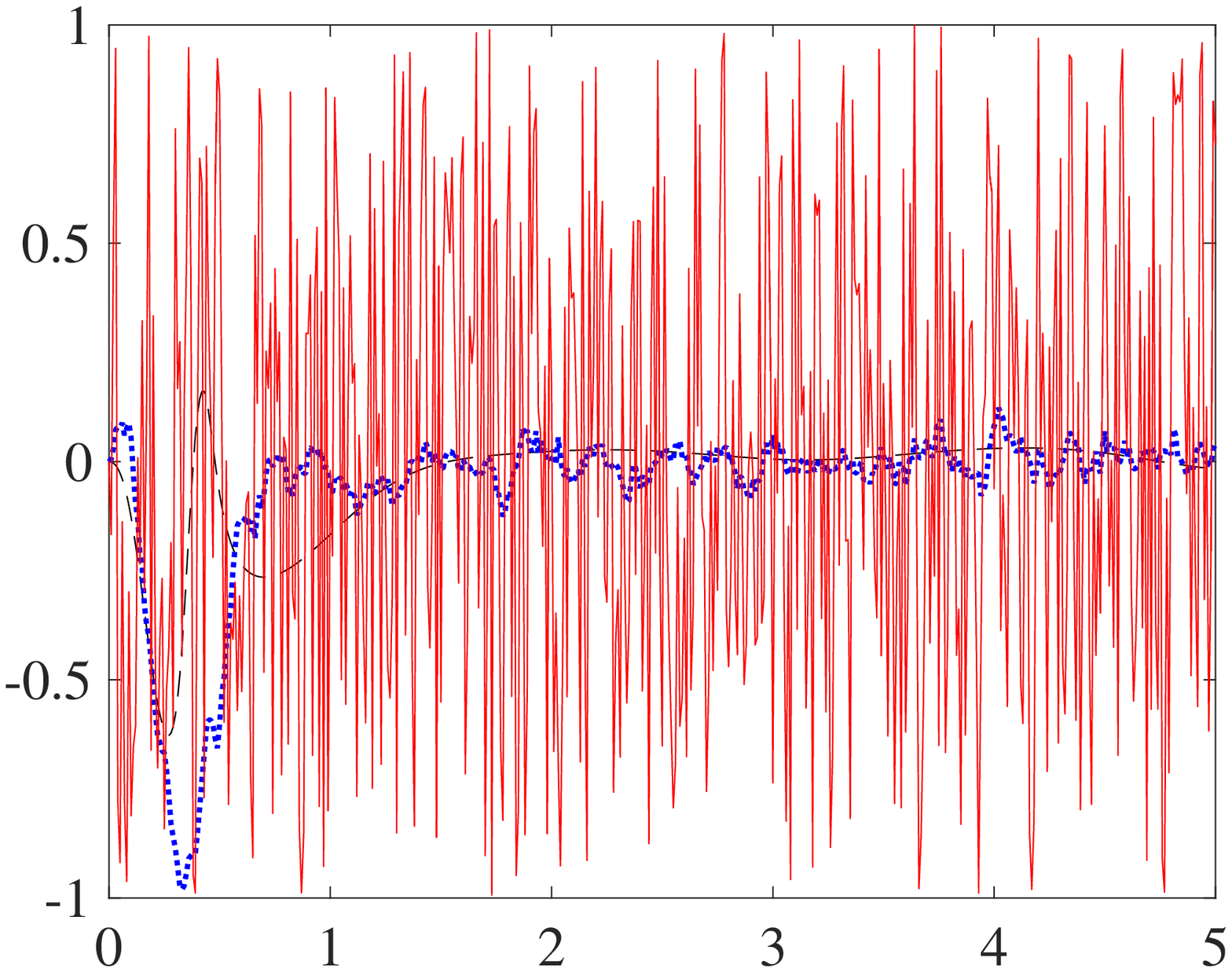}\label{fig:sim_R11_4}}\hspace{-0.3cm}
		\subfigure[$R_{22}$]{
		\includegraphics[width=0.33\columnwidth]{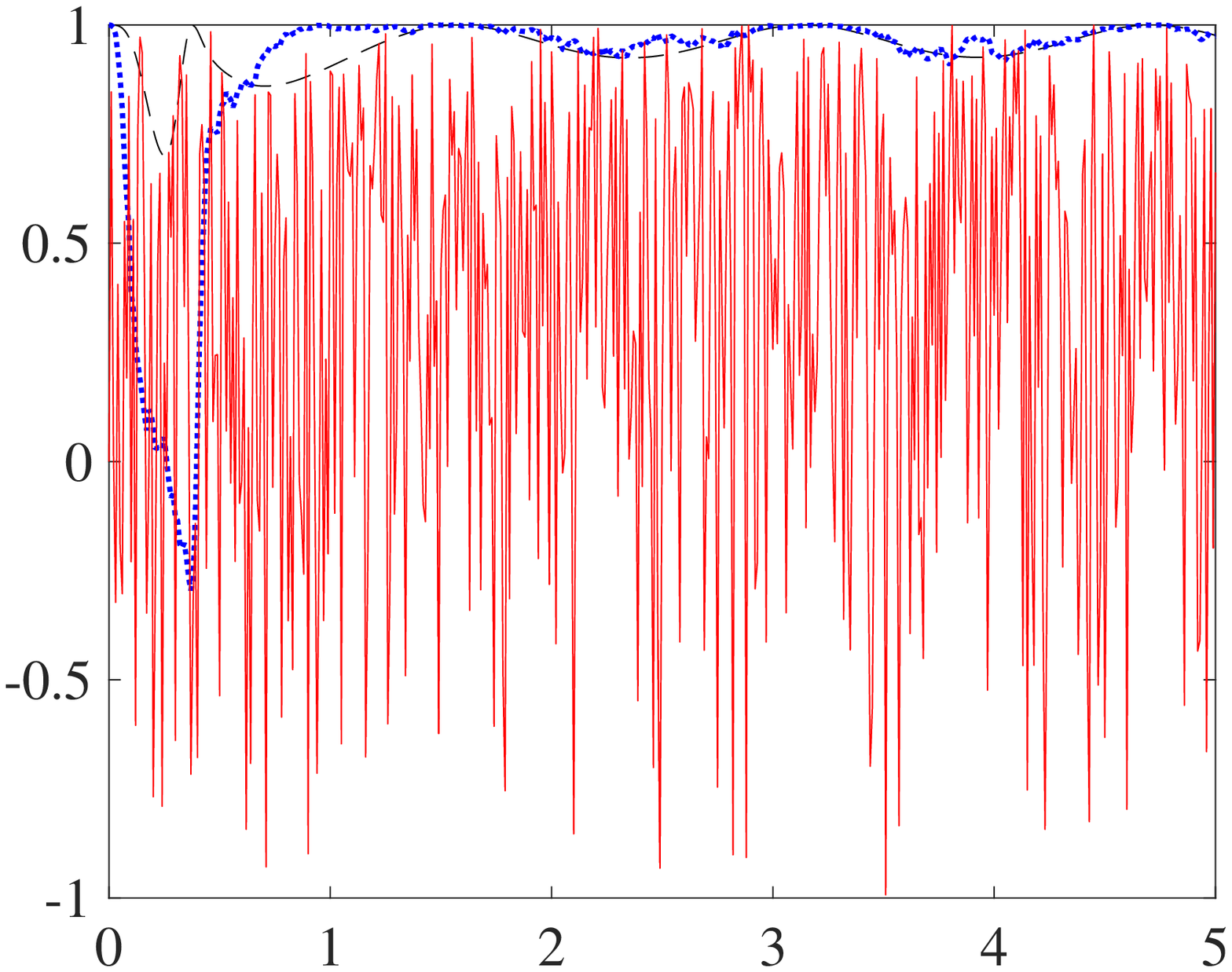}\label{fig:sim_R11_4}}\hspace{-0.3cm}
		\subfigure[$R_{23}$]{
		\includegraphics[width=0.33\columnwidth]{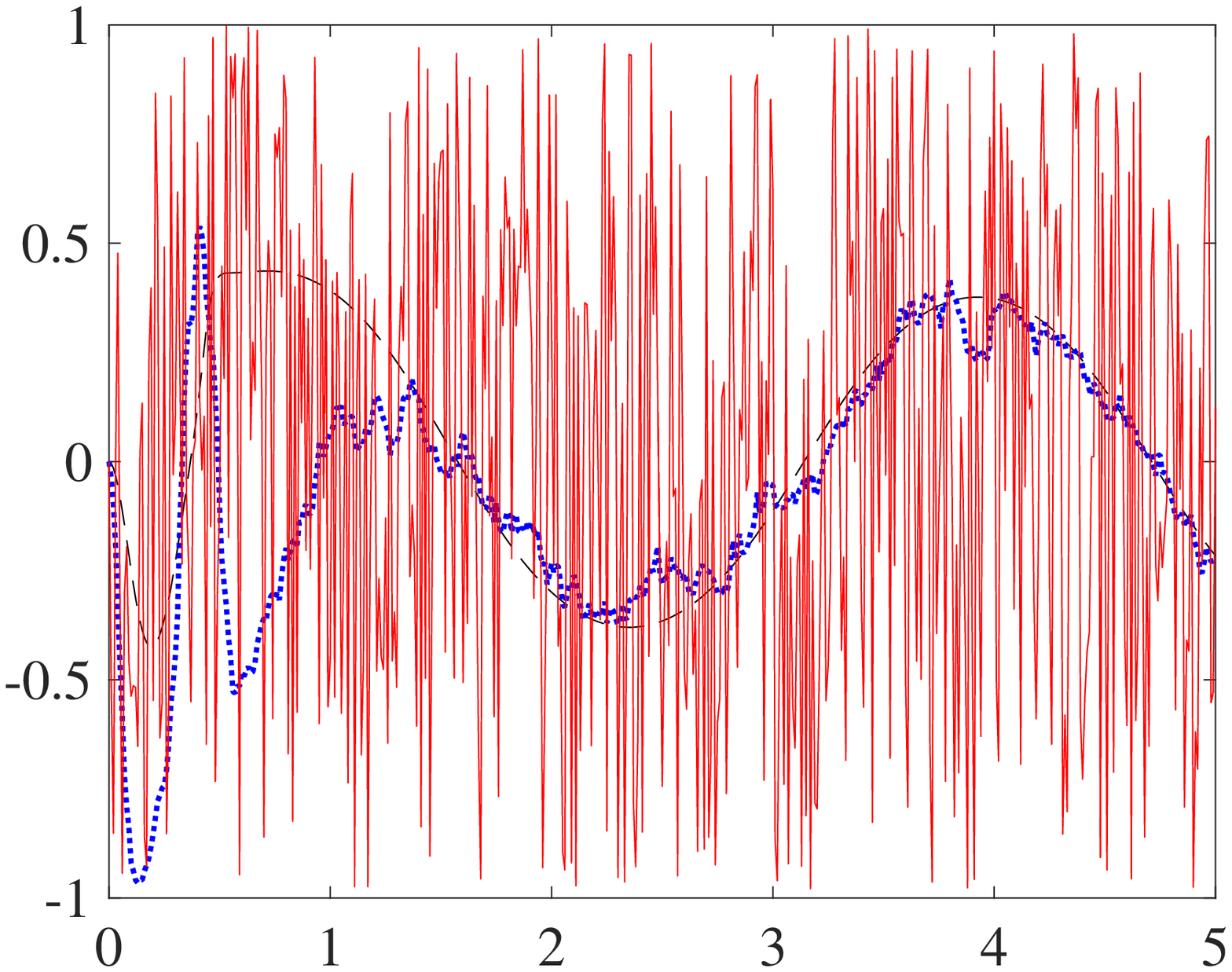}\label{fig:sim_R11_4}}}\centerline{
		\subfigure[$R_{31}$]{
		\includegraphics[width=0.33\columnwidth]{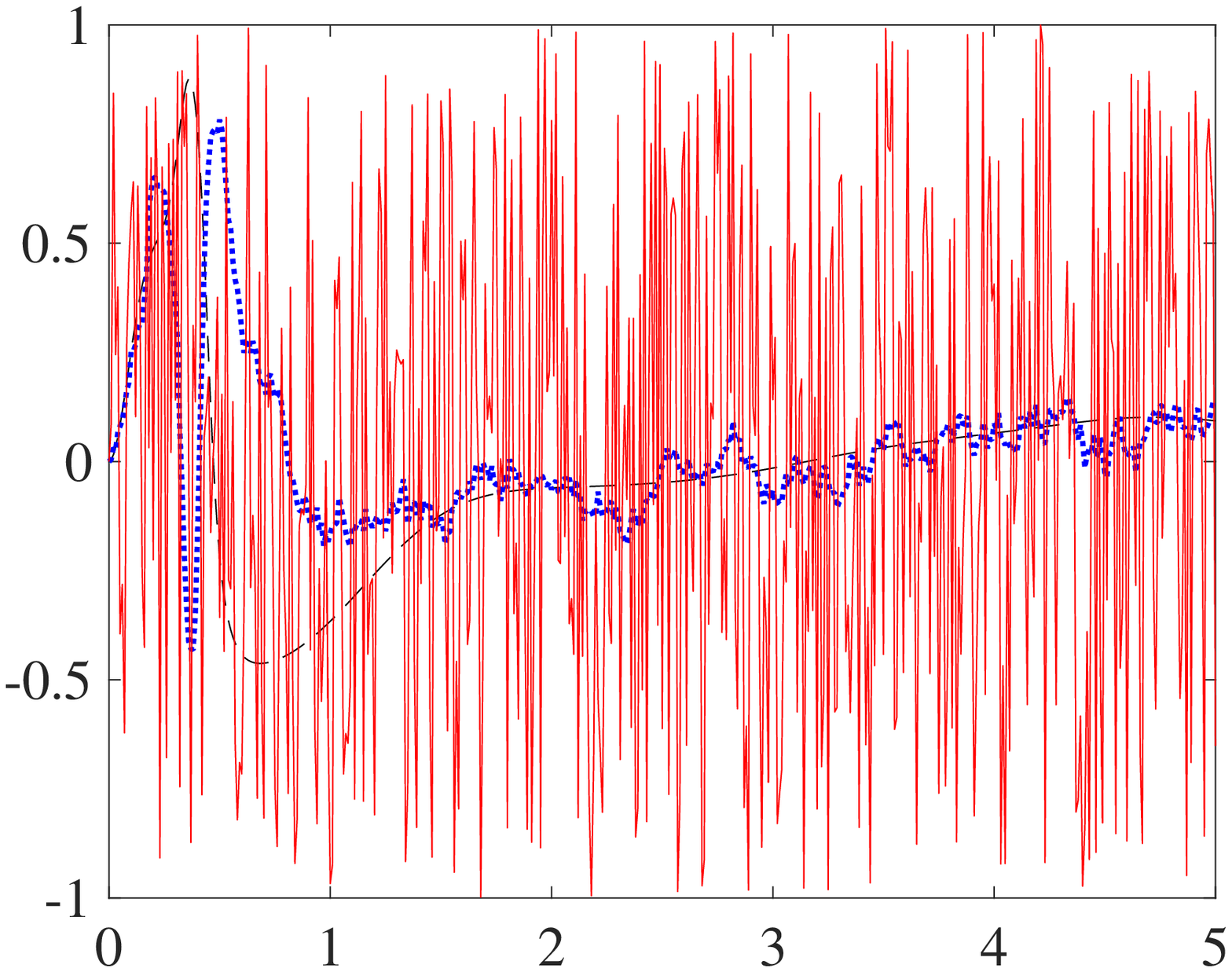}\label{fig:sim_R11_4}}		\hspace{-0.3cm}	
		 \subfigure[$R_{32}$]{
		\includegraphics[width=0.33\columnwidth]{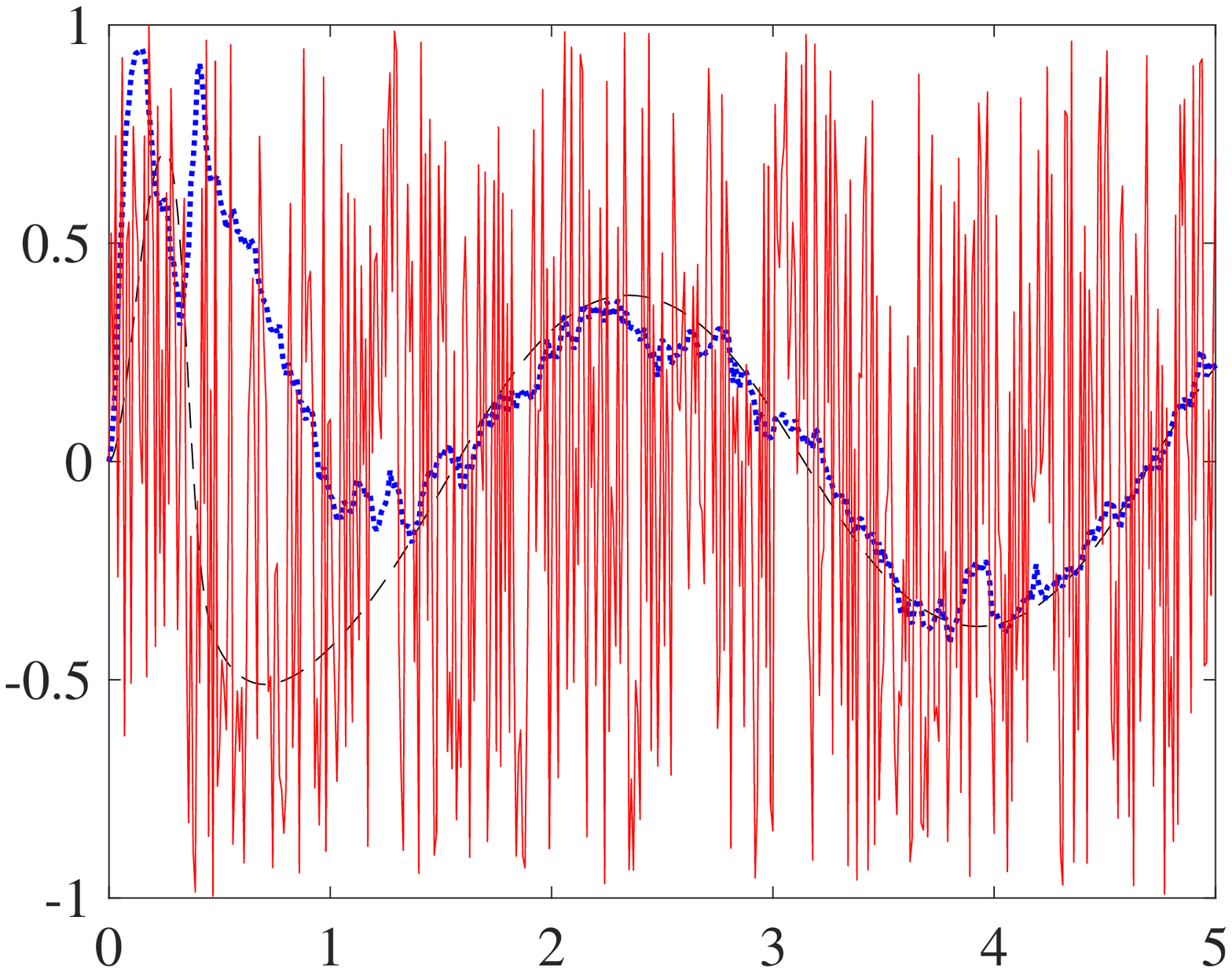}\label{fig:sim_R11_4}}\hspace{-0.3cm}
		\subfigure[$R_{33}$]{
		\includegraphics[width=0.33\columnwidth]{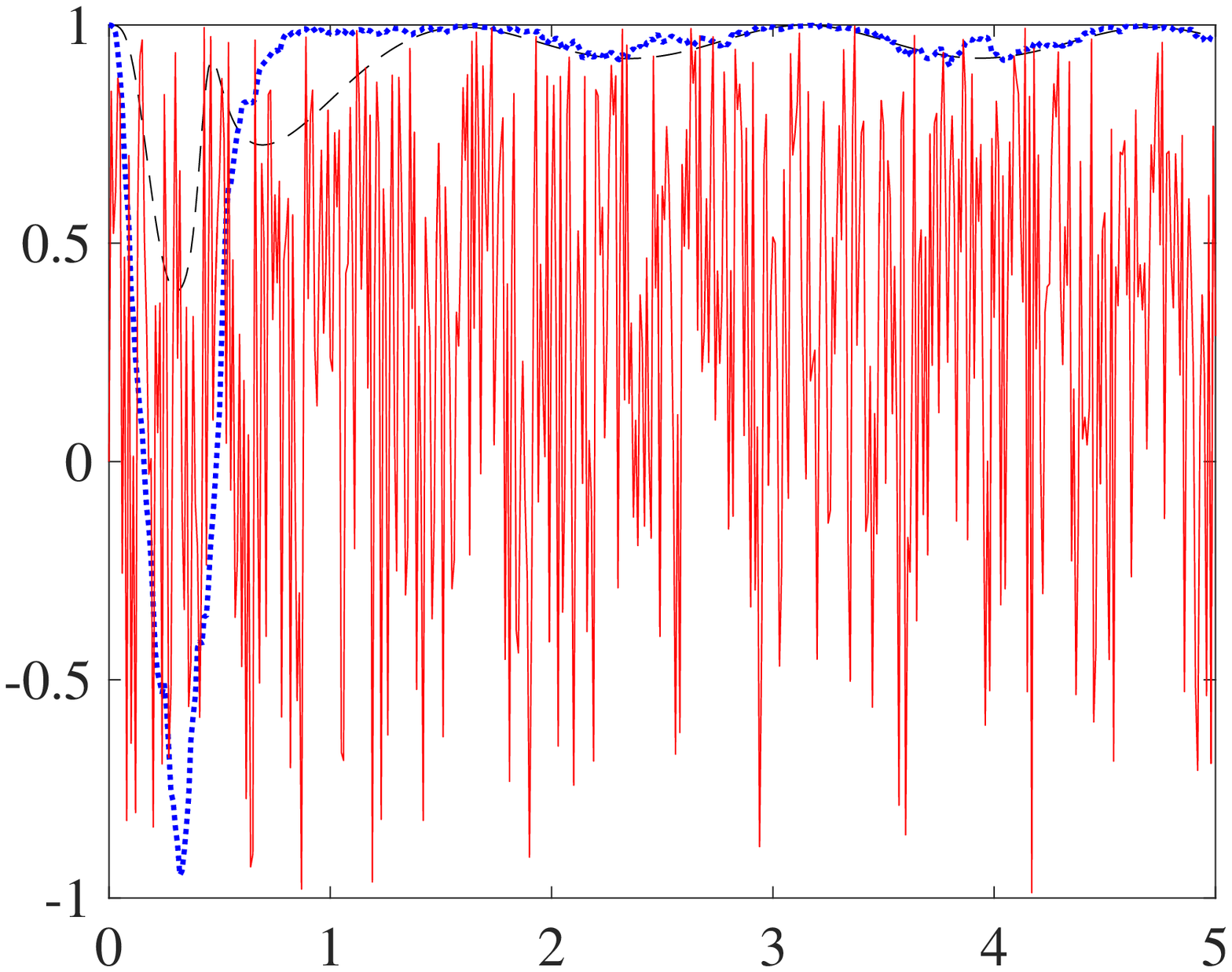}\label{fig:sim_R11_4}}
}
\caption{EKF performance in the first numerical example for quadrotor rotation matrix (dotted: EKF, dashed: desired, solid: noisy measurements). A short animation is also available at  \href{https://youtu.be/Dfi3IljfS-U}{https://youtu.be/Dfi3IljfS-U}}\label{fig:sim1R}
\end{figure}
We have also illustrated the attitude of the quadrotor as a rotation matrix in Figure \ref{fig:sim1R}. Noisy measurement data from sensor are presented along with the estimated attitude obtained from the proposed EKF and the desired attitude, $R_{d}$.

\subsection{\textbf{Example 2- GPS denied environment}}
Assume that we receive measurements for attitude and angular velocity from an (Inertial Measurement Unit), IMU installed onboard and there is no sensor or measurements available for position and translational velocity for the quadrotor. In other words, we present a scenario where GPS systems fails to work. In this case our measurement vector, $z\in\Re^3\times\SO$ consists of the angular velocity and the gravitational acceleration. The measurements noise covariance is chosen as $\mathcal{R}=0.1$ with process noise of $\mathcal{Q}=0.001$ and initial estimates of the state variables are given by $\bar{x}(0)=[0.2,-0.5,-0.5]^{T}$ and $\bar{{v}}(0)=[0.1,-0.1,-0.1]^{T}$. Desired trajectory is chosen to be an Elliptic Helix as
\begin{gather*}
x_d(t) =[0.4t,\; a\sin(wt),\; -b\cos(wt)]^{T},\nonumber\\
b_{1d}=[\cos(wt),\; \sin(wt),\; 0]^{T},
\end{gather*}
where constants $a=0.4$, $b=0.6$, $w=\pi$ and chosen particularly with several rotations of the quadrotor to illustrate the effectiveness of the proposed controller and extended kalman filter on $\SE$ which does not concerns with singularities and ambiguities as seen in derivations with Euler angles or quaternions. 
\begin{figure}[h]
\centerline{
	        \subfigure[Quadrotor position, $x$, $\hat{x}$]{
		\includegraphics[width=0.5\columnwidth]{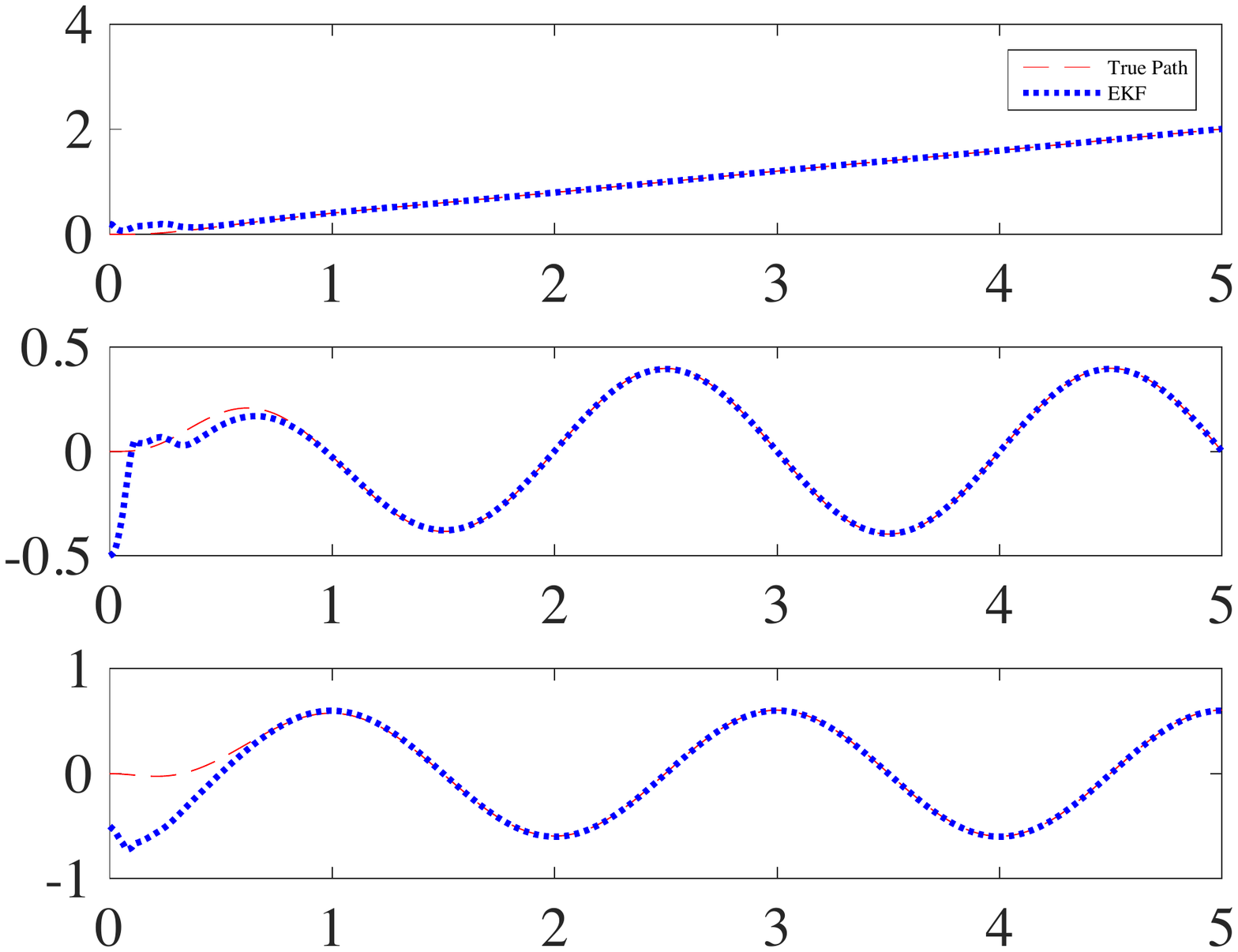}\label{fig:sim_x}}
		\subfigure[$\Omega_{1}$, $\hat{\Omega}_{1}$]{
		\includegraphics[width=0.5\columnwidth]{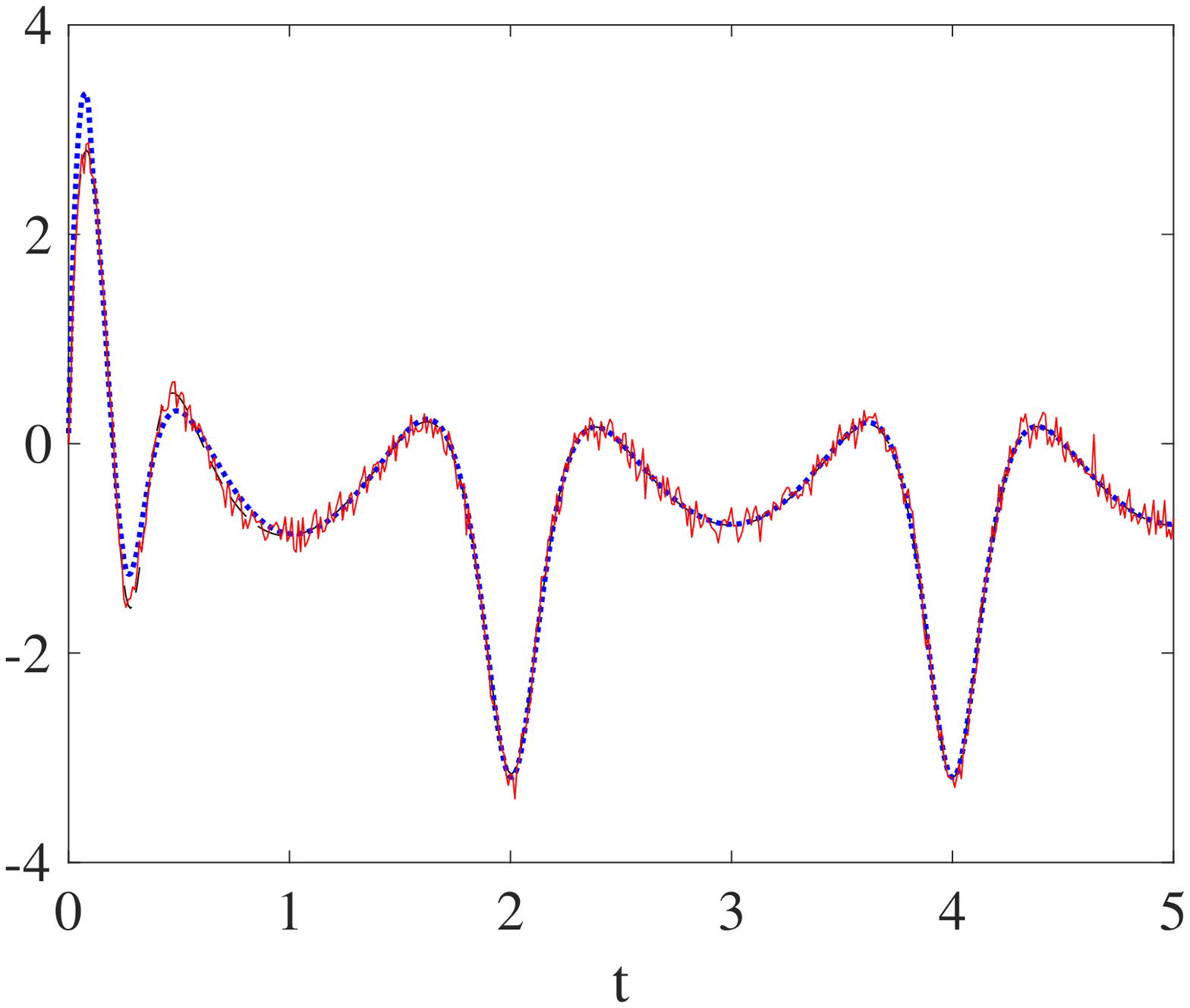}\label{fig:sim_W1}}}\centerline{
	        \subfigure[Quadrotor velocity, $v$, $\hat{v}$]{
		\includegraphics[width=0.5\columnwidth]{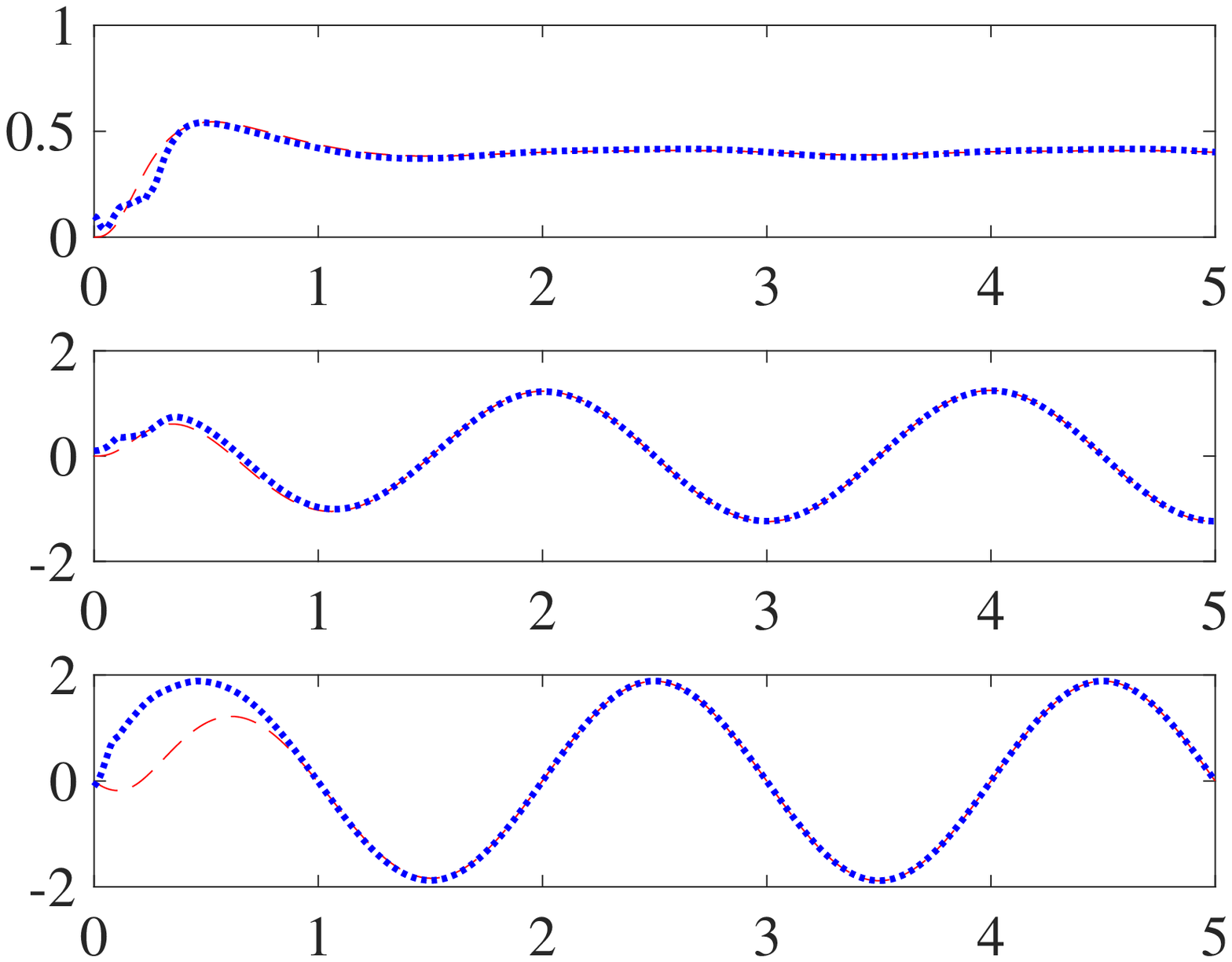}\label{fig:sim_v}}
		\subfigure[$\Omega_{2}$, $\hat{\Omega}_{2}$]{
		\includegraphics[width=0.5\columnwidth]{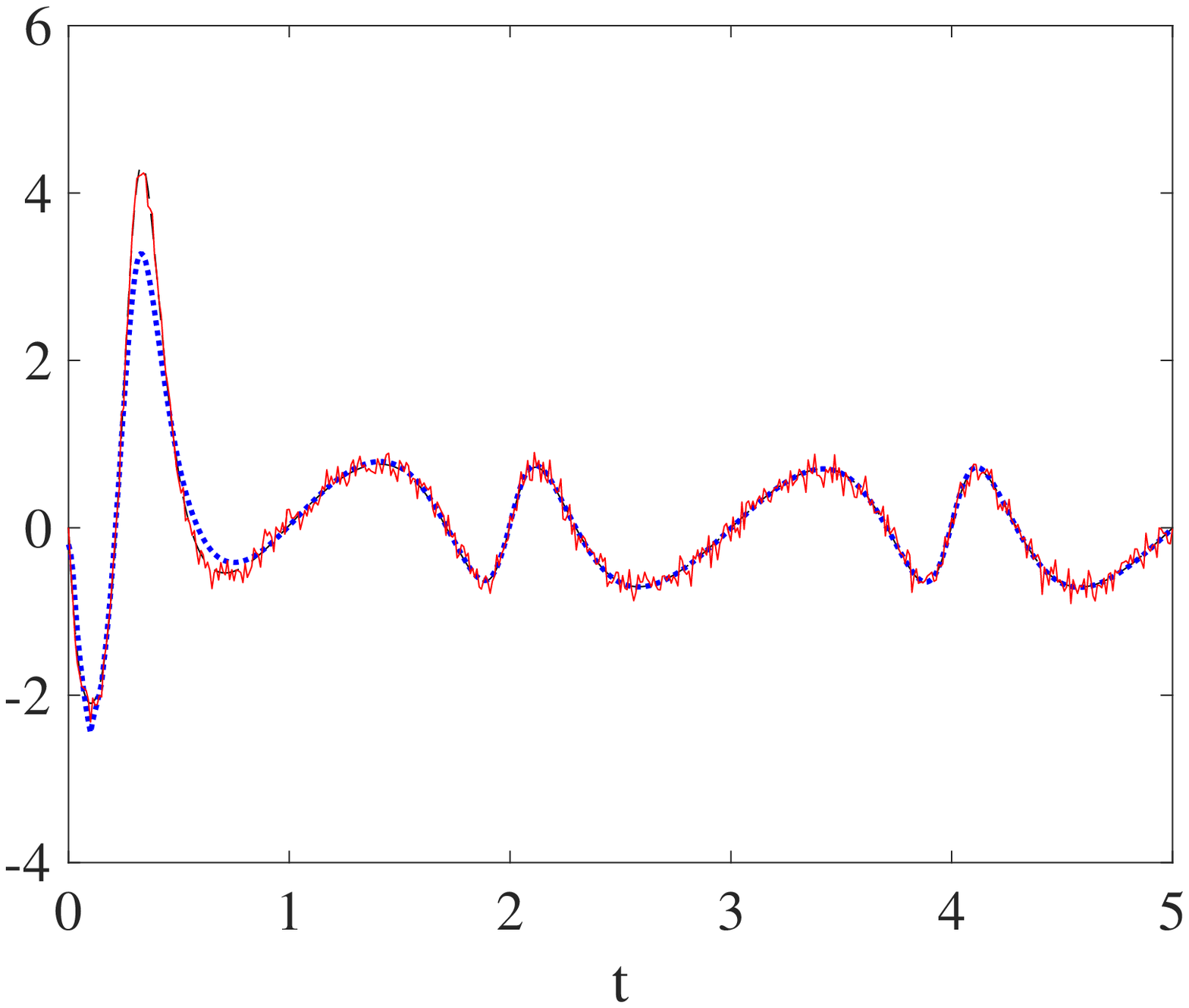}\label{fig:sim_W2}}}\centerline{
		\subfigure[3D view, $x$, $\hat{x}$]{
		\includegraphics[width=0.5\columnwidth]{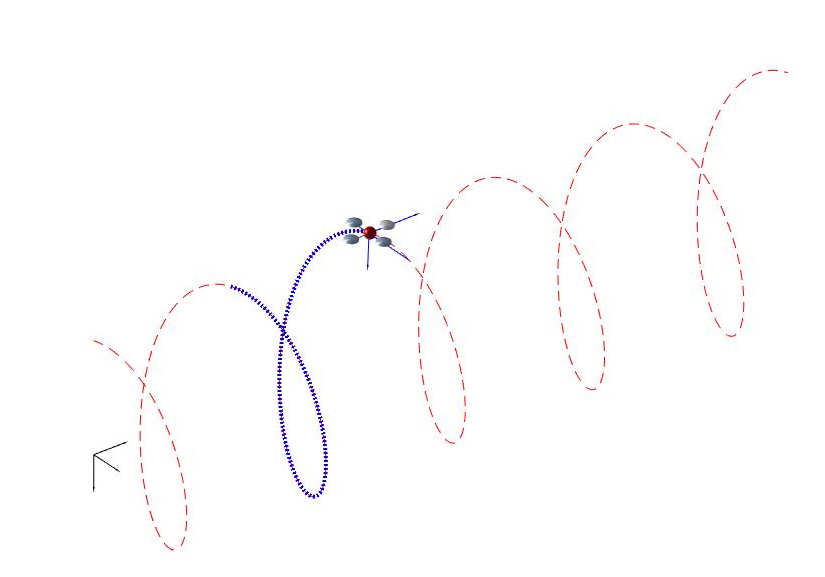}\label{fig:eq}}			
		 \subfigure[$\Omega_{3}$, $\hat{\Omega}_{3}$]{
		\includegraphics[width=0.5\columnwidth]{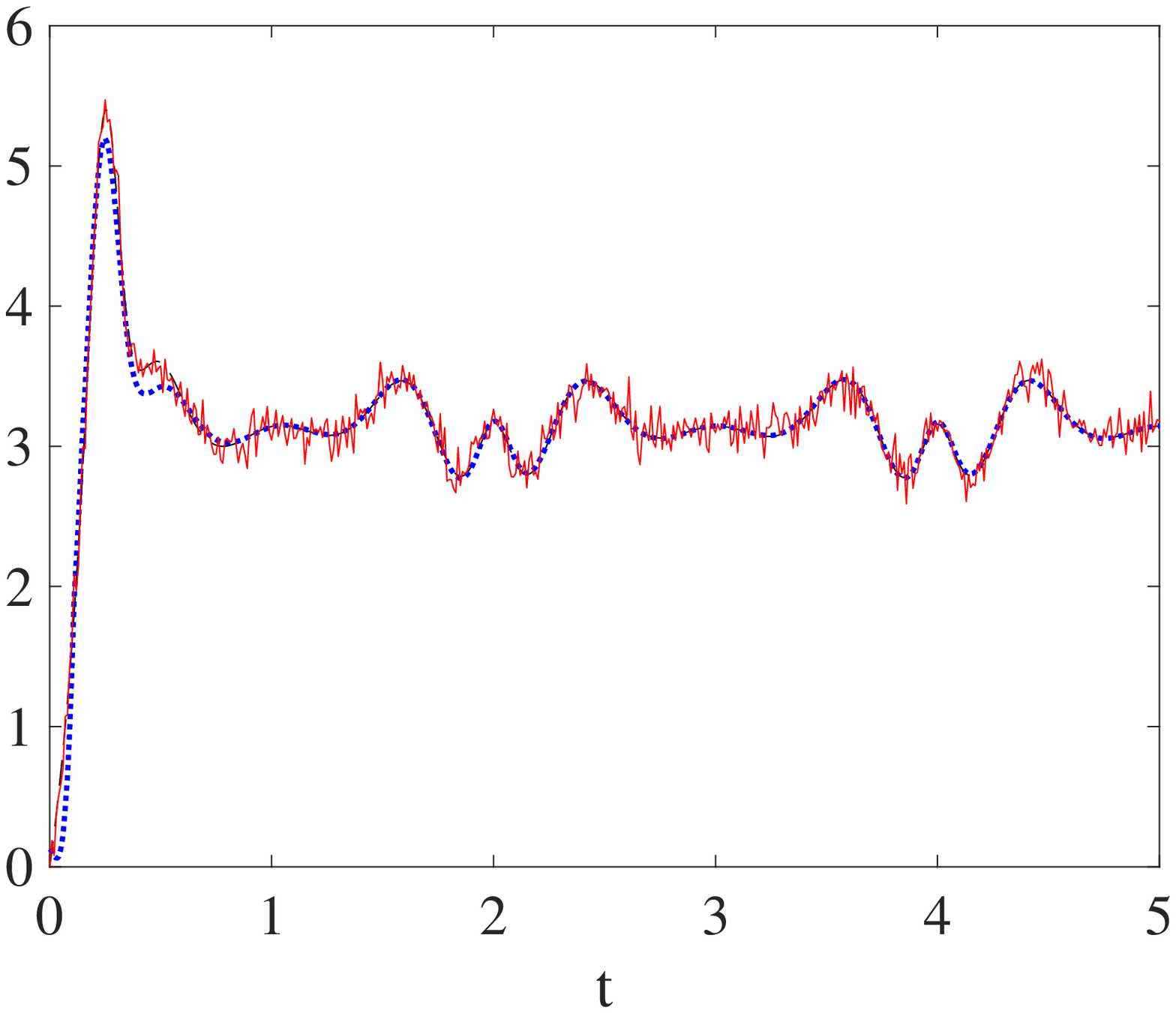}\label{fig:sim_W3}}}\centerline{
		\subfigure[Position error]{
		\includegraphics[width=0.5\columnwidth]{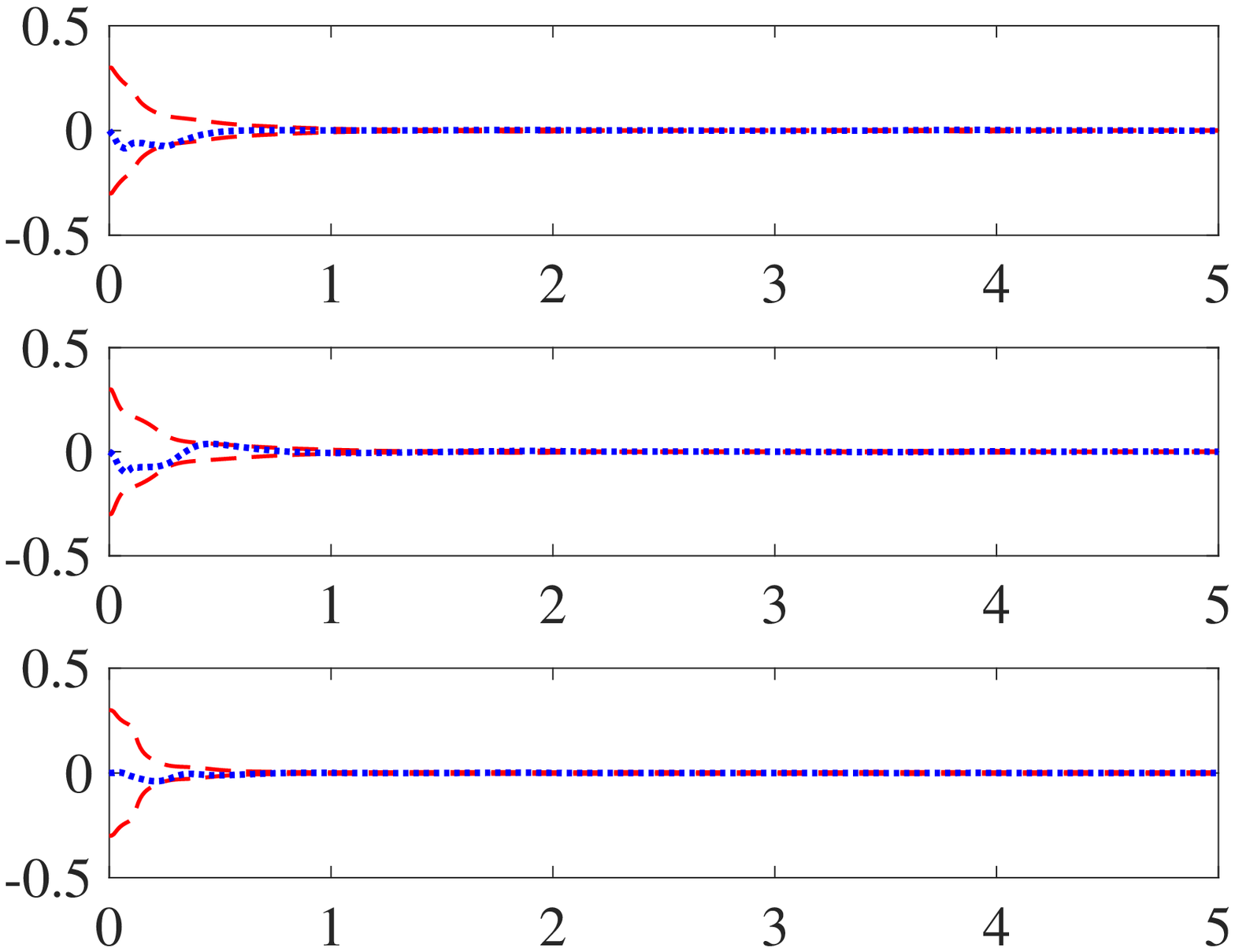}\label{fig:errorsx}}
		\subfigure[Velocity error]{
		\includegraphics[width=0.5\columnwidth]{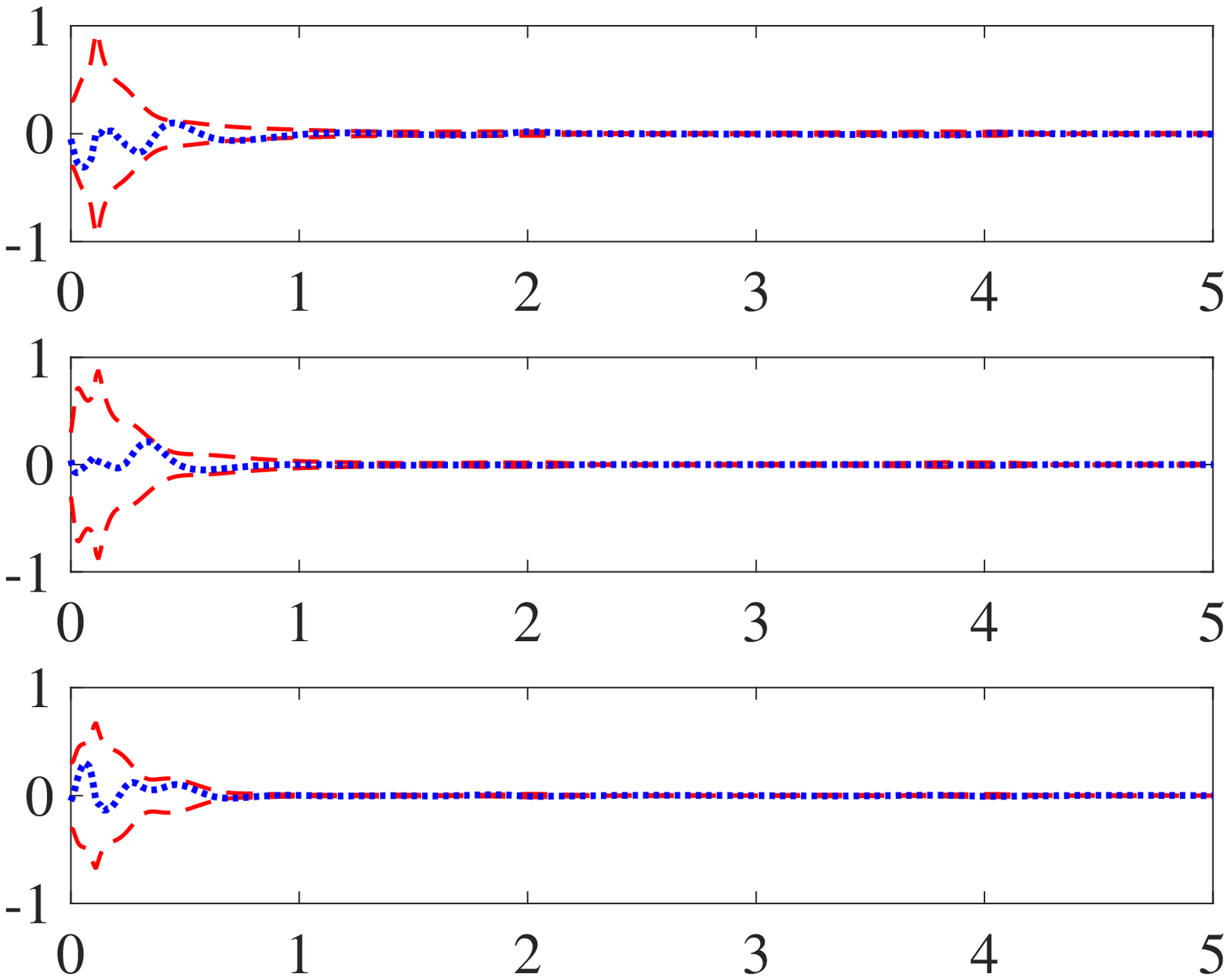}\label{fig:errorsv}}
}
\caption{EKF performance in the second numerical example (dotted: EKF, dashed: desired, solid: noisy measurements). A short animation is also available at  \href{https://youtu.be/F4Vntws97RU}{https://youtu.be/F4Vntws97RU}}\label{fig:sim2}
\end{figure}
Figures \ref{fig:sim_x} and \ref{fig:sim_v} illustrate the position and velocity of the quadrotor estimated with EKF and plotted verse the true path respectively. As it is clear from this figure, although we did not have measurements on position and translational velocity, EKF was able to estimate this values significantly. Position and velocity estimation errors are also presented in Figures \ref{fig:errorsx} and \ref{fig:errorsv} respectively. \\
Figures \ref{fig:sim_W1}, \ref{fig:sim_W2}, and \ref{fig:sim_W3} illustrate the angular velocity of the quadrotor during this maneuver. Noisy measurement data and the estimated value from EKF are presented along with the true path to show the performance and effectiveness of the EKF to reduce the error and improve noisy measurements. Position and velocity estimation errors for this example are presented in Figures \ref{fig:errorsx} and \ref{fig:errorsv}.



\section{Experimental Results}\label{sec:EXP}
The quadrotor UAV developed at the flight dynamics and control laboratory at the George Washington University is shown at Figure~\ref{fig:Quad}. We developed an accurate CAD model as shown in Figure~\ref{fig:QM} to identify several parameters of the quadrotor, such as moment of inertia and center of mass. Furthermore, a precise rotor calibration is performed for each rotor, with a custom-made thrust stand as shown in Figure~\ref{fig:stand} to determine the relation between the command in the motor speed controller and the actual thrust. For various values of motor speed commands, the corresponding thrust is measured, and those data are fitted with a second order polynomial. 
\begin{figure}[h]
\centerline{
	\subfigure[Hardware configuration]{
\setlength{\unitlength}{0.1\columnwidth}\scriptsize
\begin{picture}(7,4)(0,0)
\put(0,0){\includegraphics[width=0.7\columnwidth]{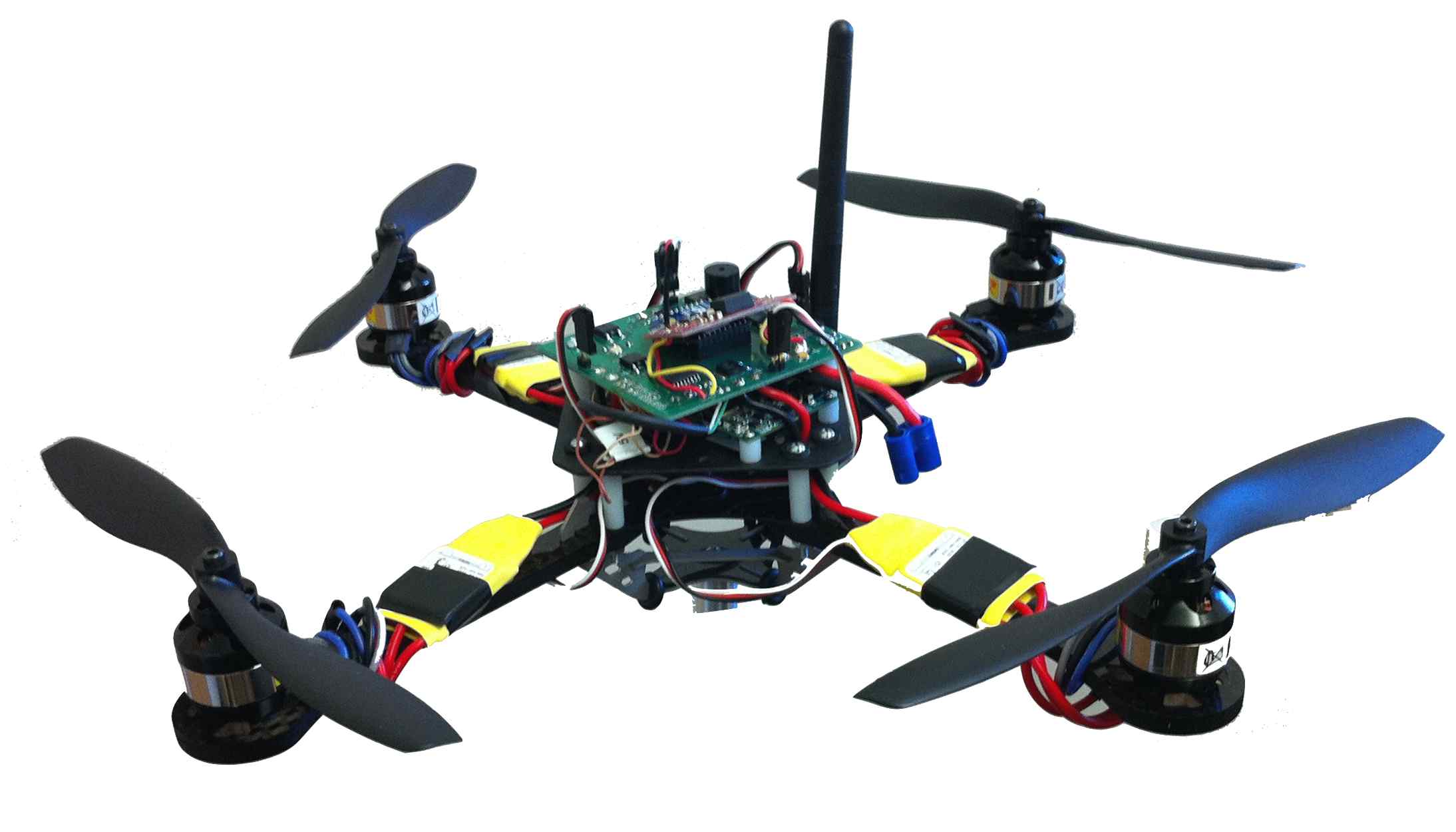}}
\put(1.95,3.2){\shortstack[c]{OMAP 600MHz\\Processor}}
\put(2.3,0){\shortstack[c]{Attitude sensor\\3DM-GX3\\ via UART}}
\put(0.85,1.4){\shortstack[c]{BLDC Motor\\ via I2C}}
\put(0.1,2.5){\shortstack[c]{Safety Switch\\XBee RF}}
\put(4.3,3.2){\shortstack[c]{WIFI to\\Ground Station}}
\put(5,2.0){\shortstack[c]{LiPo Battery\\11.1V, 2200mAh}}
\end{picture}\label{fig:Quad}}
	\subfigure[Motor calibration setup]{
	\includegraphics[width=0.25\columnwidth]{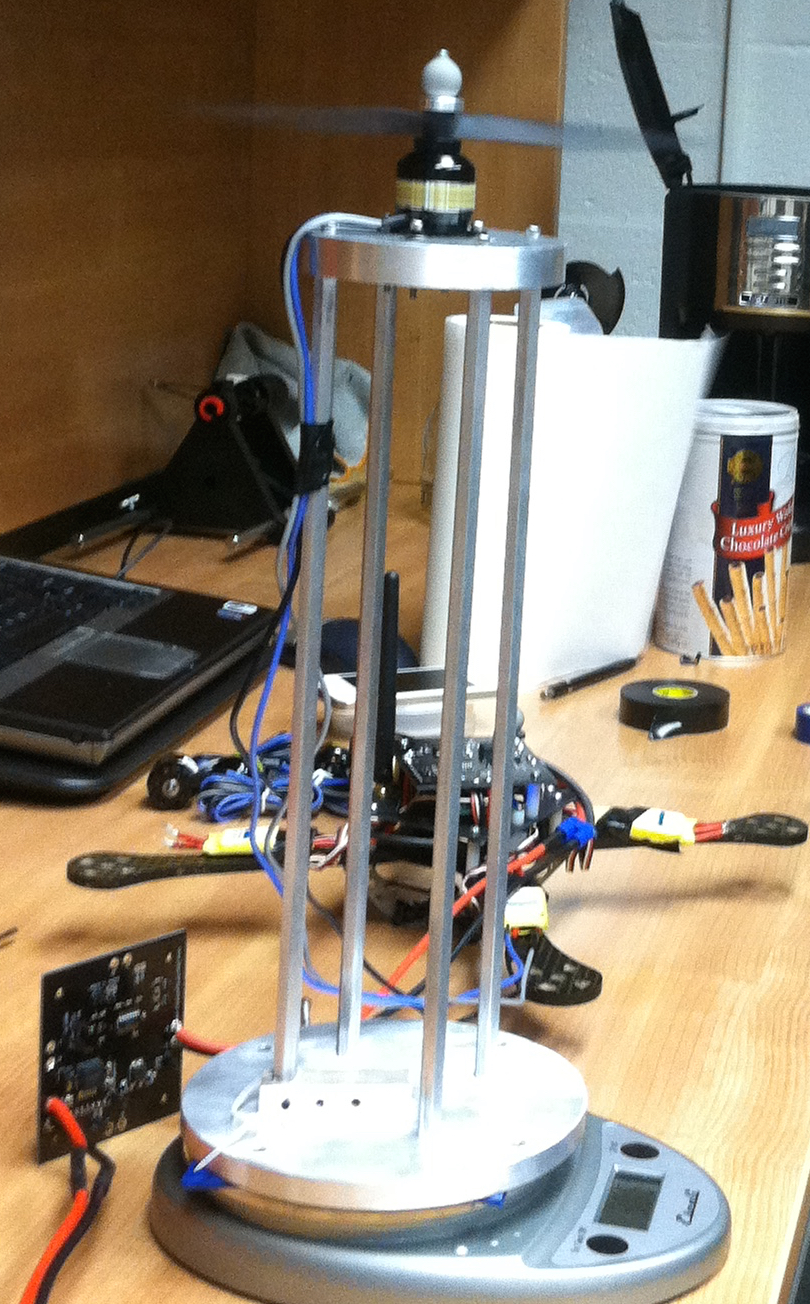}\label{fig:stand}}
}
\caption{Hardware development for a quadrotor UAV}
\end{figure}
Angular velocity and attitude are measured from inertial measurement unit (IMU). Position of the UAV is measured from motion capture system (Vicon). Ground computing system receives the Vicon data and send it to the UAV via XBee. The Gumstix is adopted as micro computing unit on the UAV. A multi-threaded C/C++ software developed for autonomous control of the quadrotor. It has two main threads, namely Vicon thread, control and estimation thread. The Vicon thread receives the Vicon measurement. In second thread, it receives the IMU measurement, estimates the velocity with EKF and handles the control outputs. Figures \ref{fig:exp} presents the experimental results for a maneuver where quadrotor autonomously tracks the following desired trajectory which is defined as a Lissajous curve
\begin{align*}
x_d (t) = [\sin(t)+\frac{\pi}{2},\;\sin 2(t),\; -0.3].
\end{align*} 
\begin{figure}[h]
\centerline{
	        \subfigure[$x$, $x_{d}$]{
		\includegraphics[width=0.5\columnwidth]{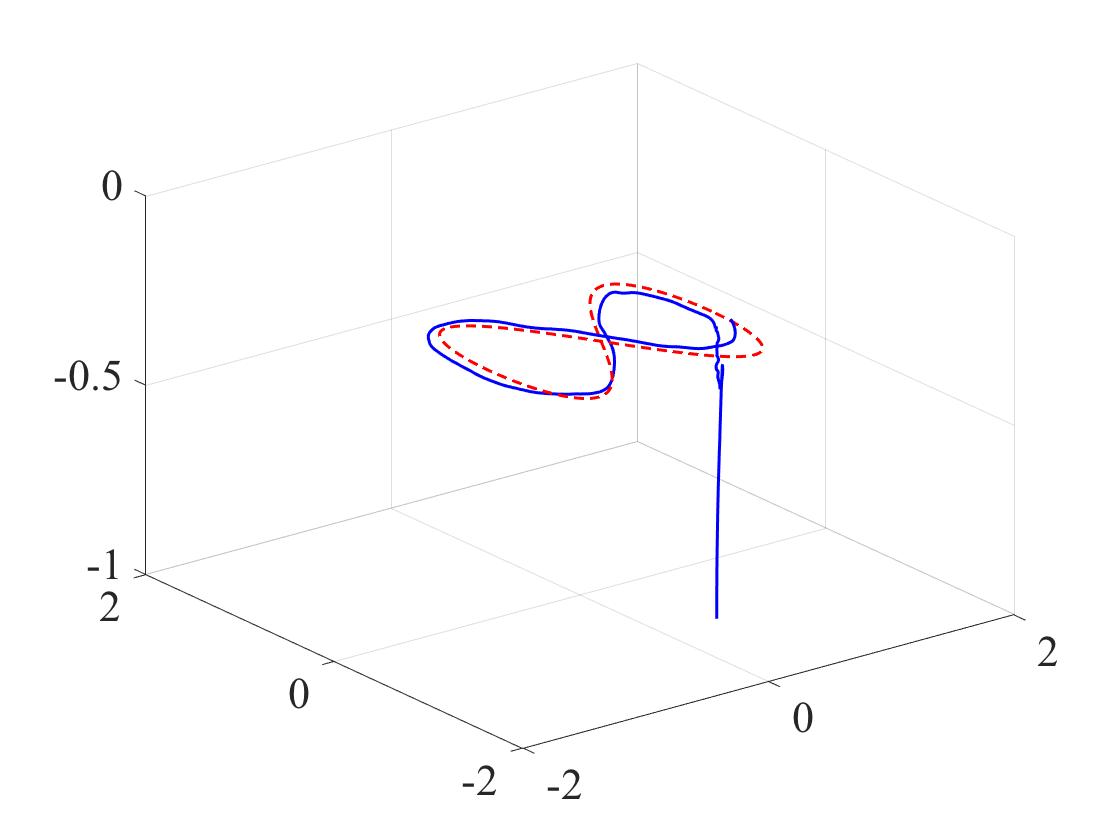}\label{fig:exp_x}}
		 \subfigure[$\Psi$, $e_{R}$, $e_{\Omega}$]{
		\includegraphics[width=0.5\columnwidth]{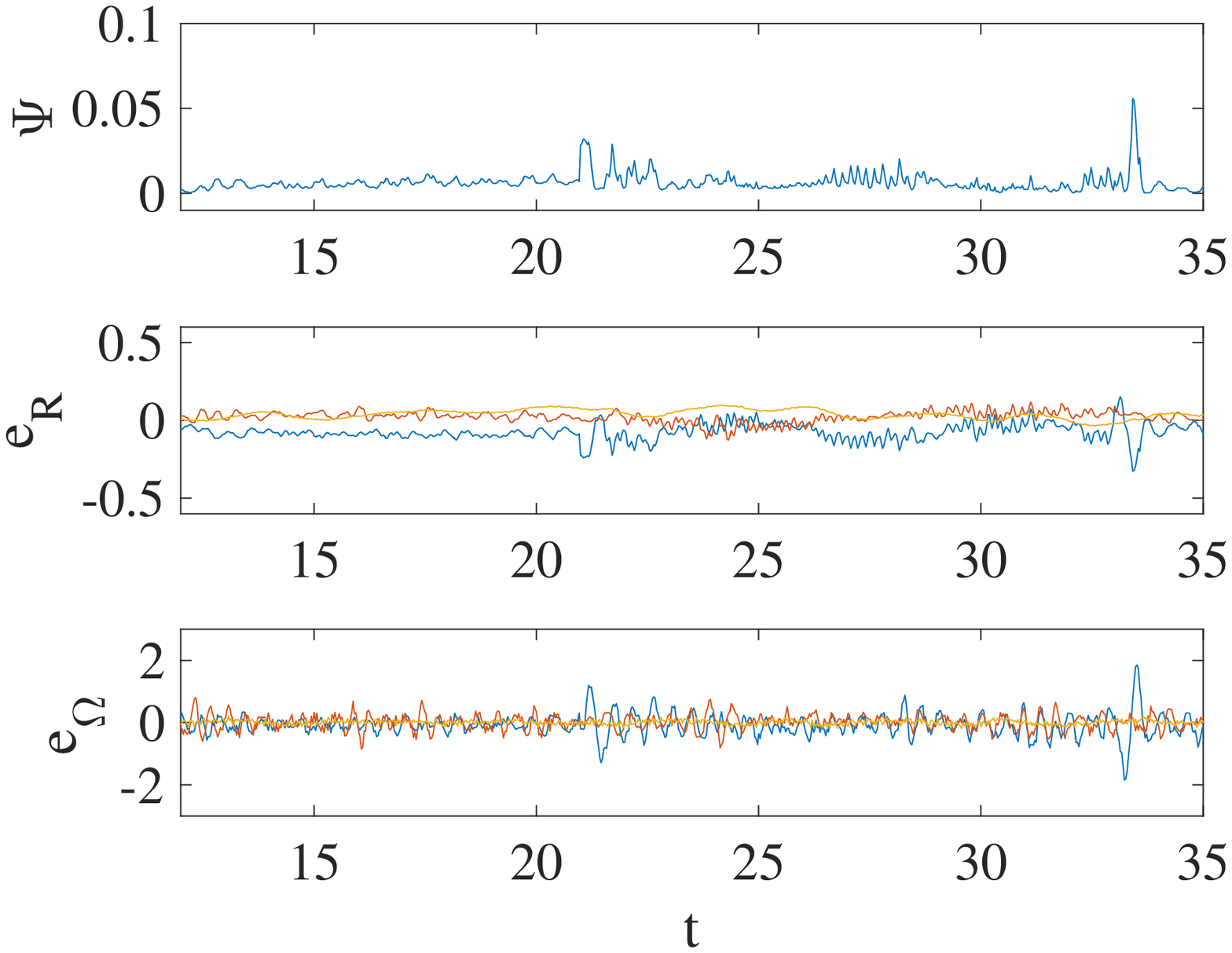}\label{fig:exp_error}}}\centerline{
		\subfigure[$\frac{dx_{1}}{dt}$, $v_{d_{1}}$, $\hat{v}_{1}$]{
		\includegraphics[width=0.5\columnwidth]{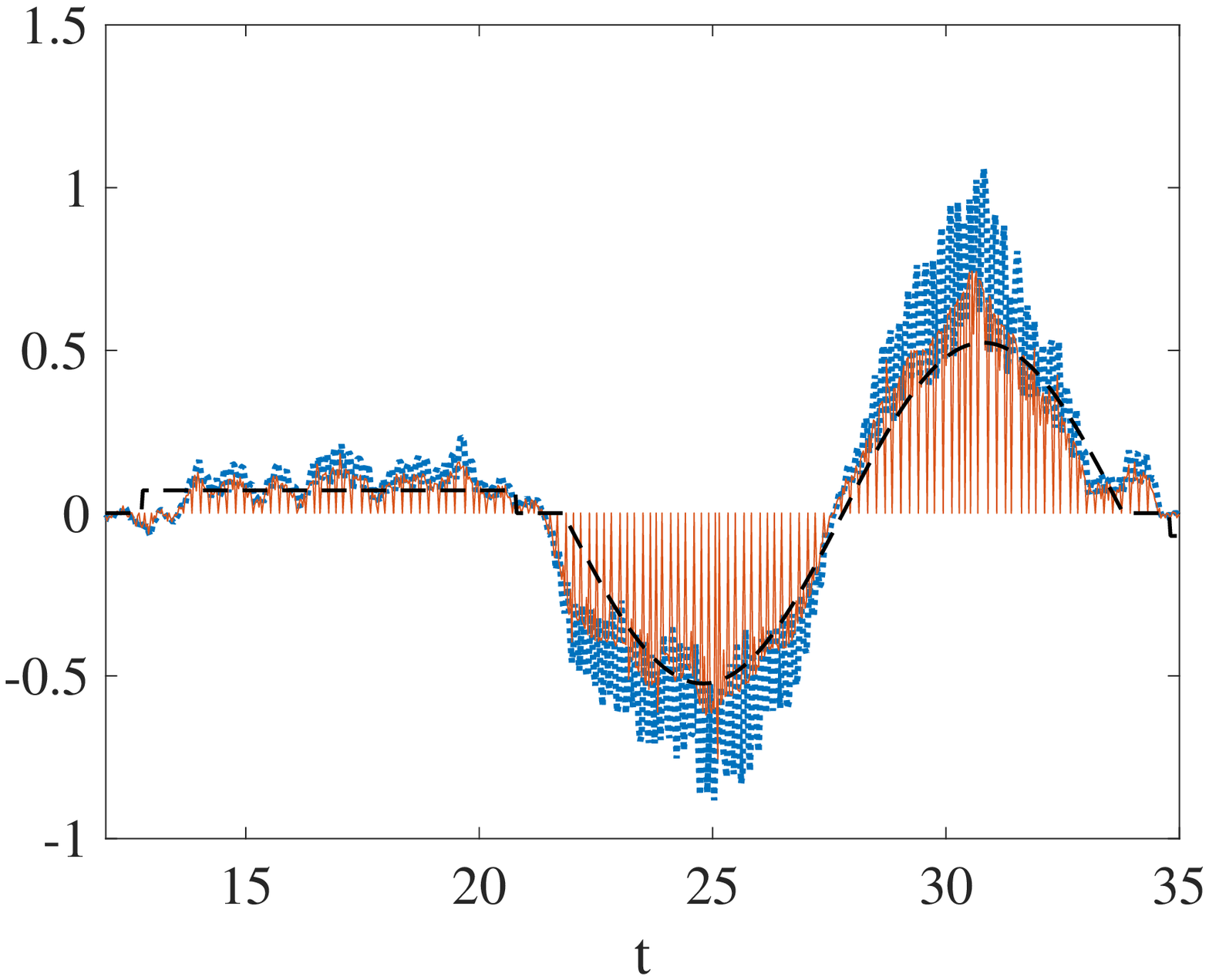}\label{fig:exp_v1}}
		 \subfigure[$\Omega_{1}$, $\hat{\Omega}_{1}$]{
		\includegraphics[width=0.5\columnwidth]{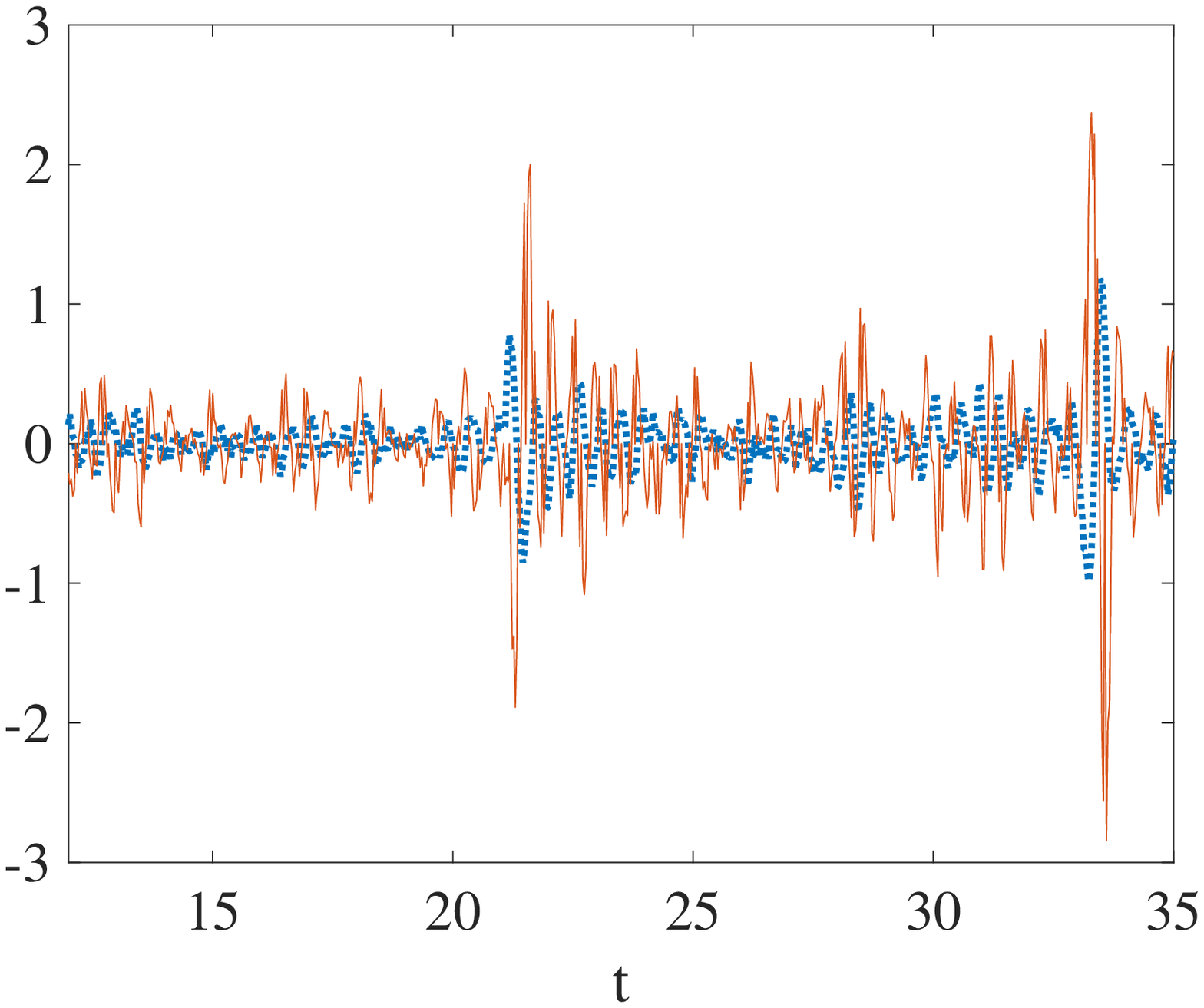}\label{fig:exp_W1}}}\centerline{
		 \subfigure[$\frac{dx_{2}}{dt}$, $v_{d_{1}}$, $\hat{v}_{2}$]{
		\includegraphics[width=0.5\columnwidth]{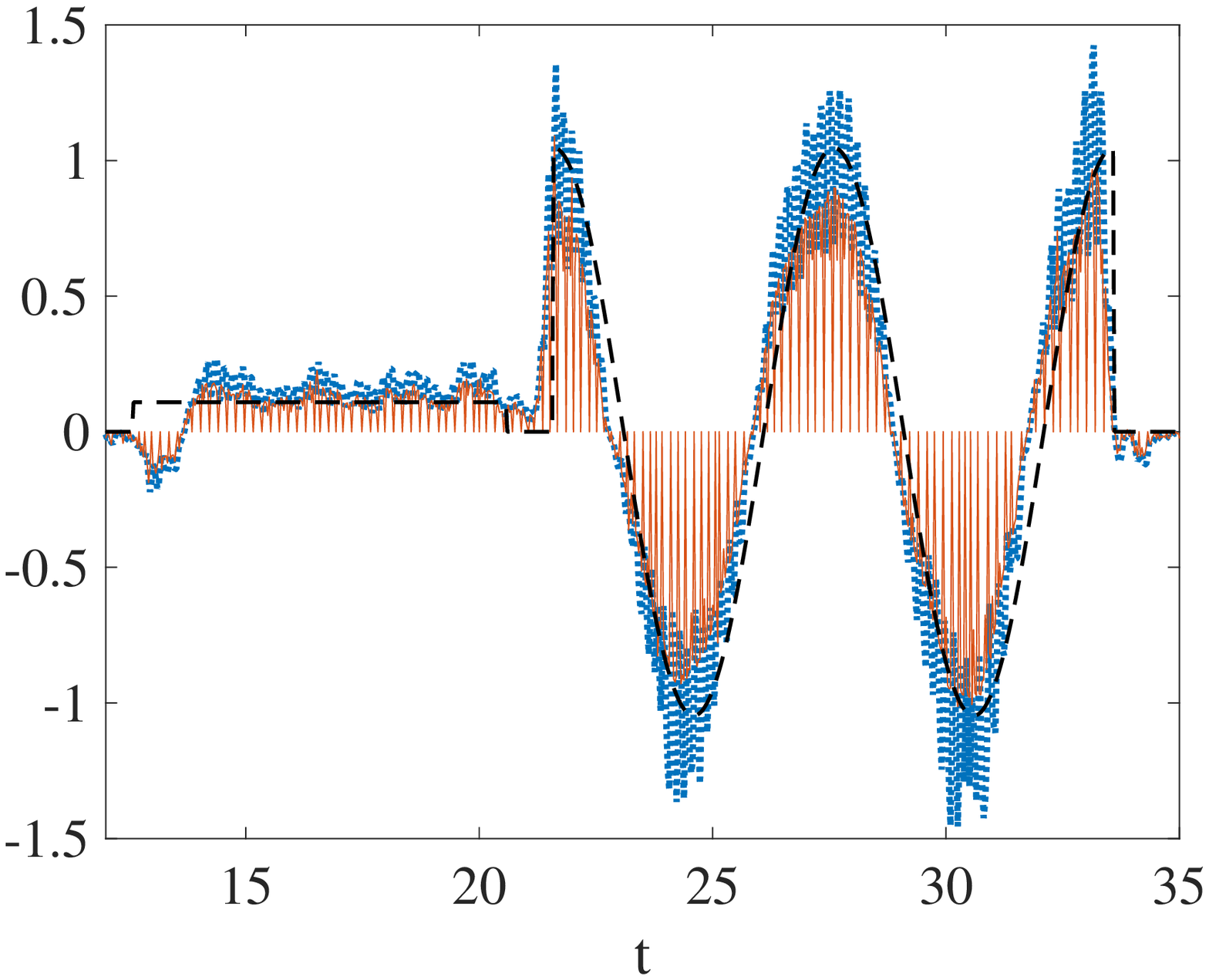}\label{fig:exp_v2}}
		 \subfigure[$\Omega_{2}$, $\hat{\Omega}_{2}$]{
		\includegraphics[width=0.5\columnwidth]{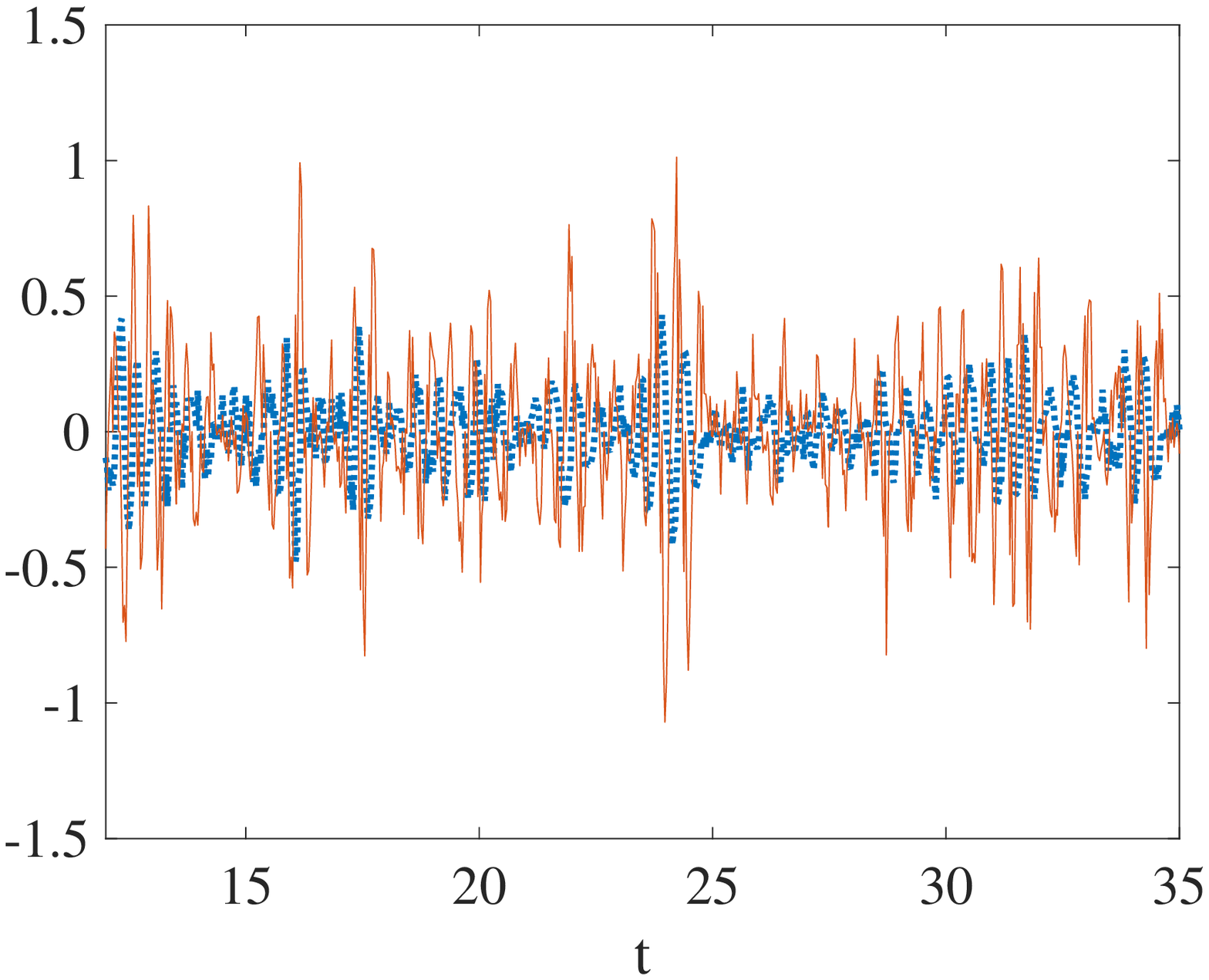}\label{fig:exp_W2}}}\centerline{
		 \subfigure[$\frac{dx_{3}}{dt}$, $v_{d_{1}}$, $\hat{v}_{3}$]{
		\includegraphics[width=0.5\columnwidth]{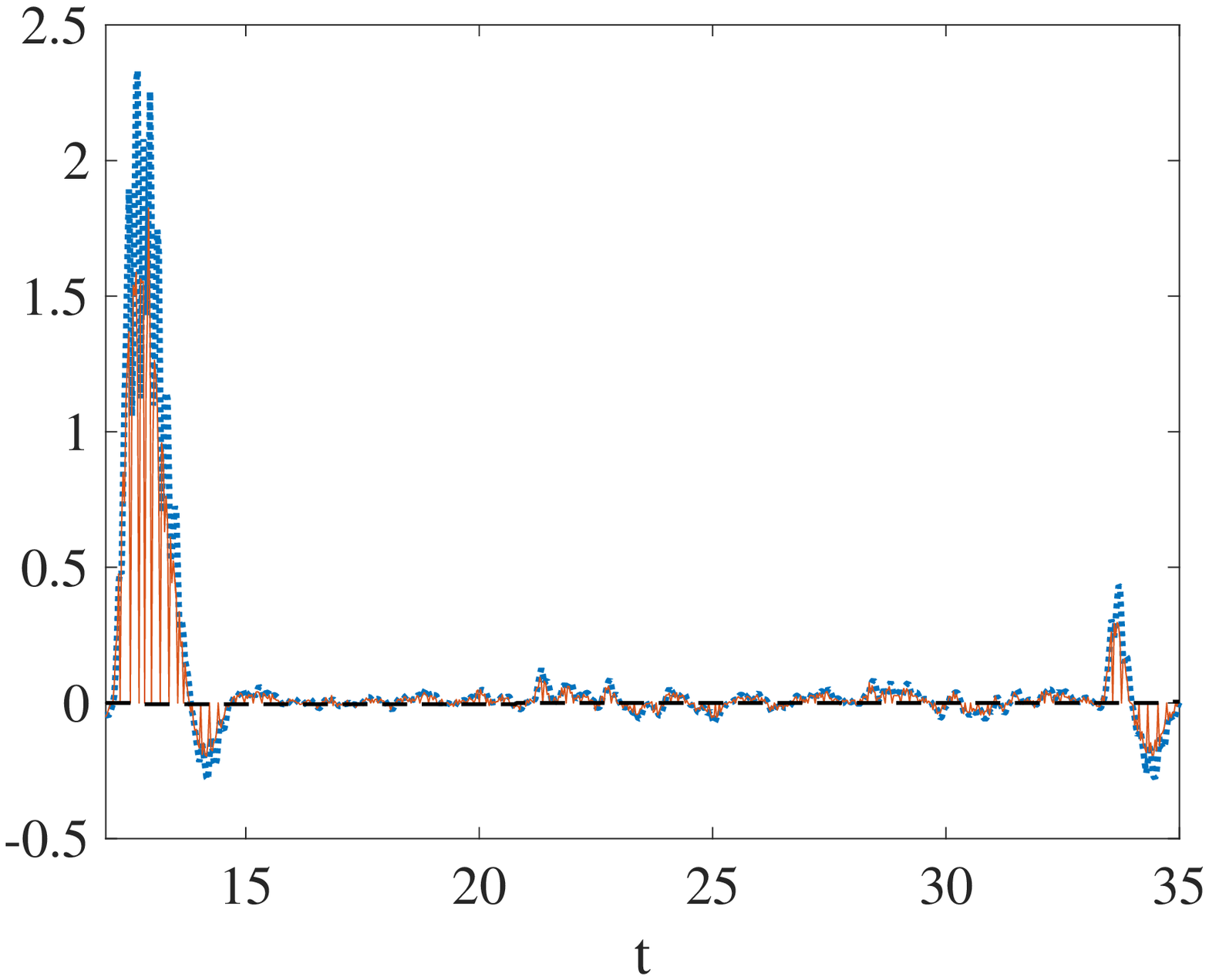}\label{fig:exp_v3}}
		 \subfigure[$\Omega_{3}$, $\hat{\Omega}_{3}$]{
		\includegraphics[width=0.5\columnwidth]{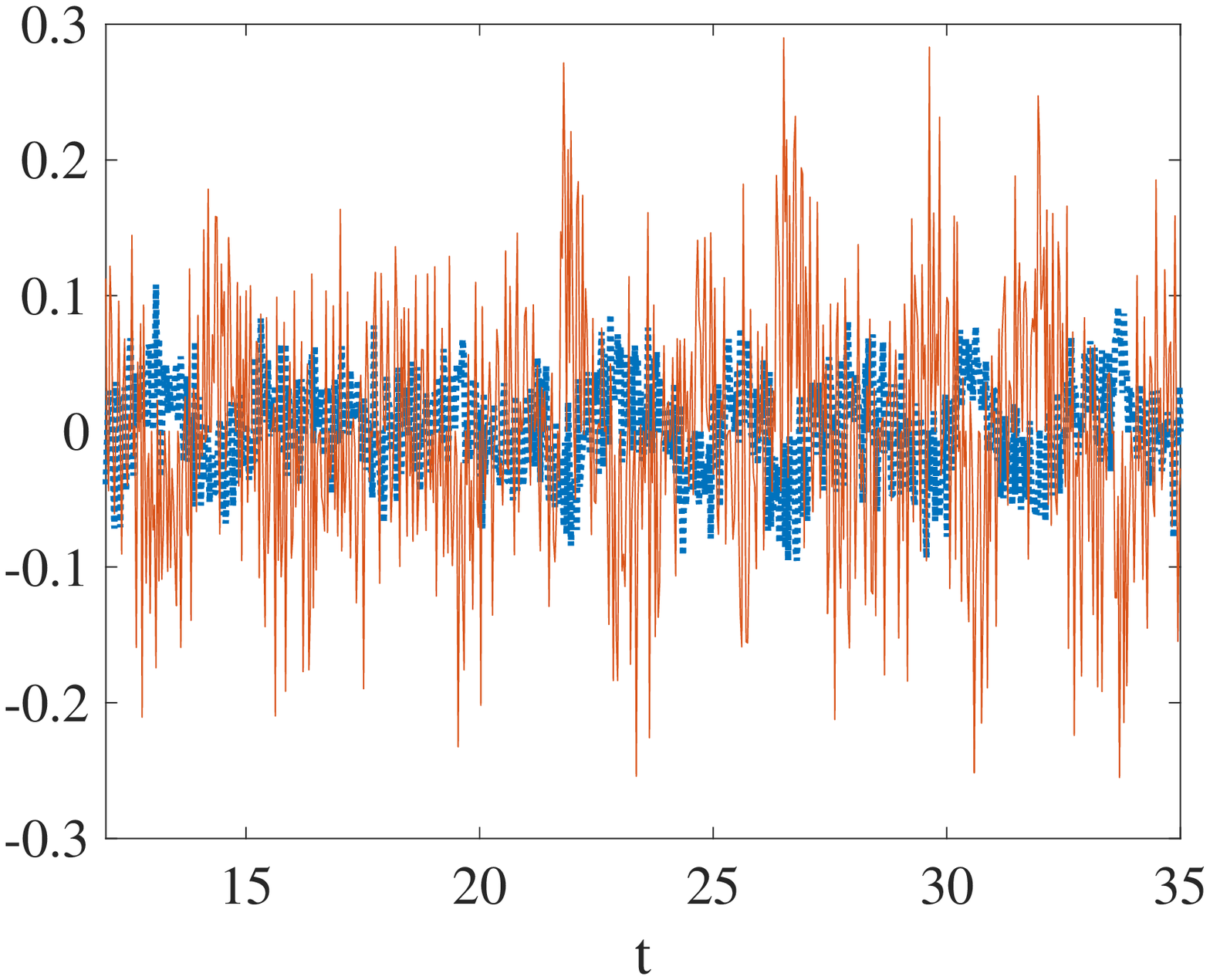}\label{fig:exp_W3}}
}
\caption{EKF performance in real-time experiment (dotted: EKF, dashed: desired, solid: noisy measurements).}\label{fig:exp}
\end{figure}
Quadrotor parameters are same as presented in the numerical simulations section and controller gains are tuned to $k_{x}=4.0$, $k_{v}=2.0$, $k_{R}=0.62$, $k_{\Omega}=0.15$, and $k_{I}=0.1$ with $c_{1}=c_{2}=0.1$. The observation matrix, $H\in\Re^{9\times18}$ utilized in this example same as one presented in the first numerical simulation and so for measurement state, $\zb_{k}\in\Re^{9\times1}$.

Figure \ref{fig:exp_x} illustrates the position of the quadrotor following the desired trajectory obtained from the experiment. The quadrotor starts from the ground, increases its altitude to get to $z_3=-0.3\ \mathrm{m}$, and starts following the trajectory. Quadrotor attitude errors, $\Psi$, $e_{R}$, and angular velocity error, $e_{\Omega}$, which are previously defined in Eq. \refeqn{psii}, \refeqn{errr}, and \refeqn{eomega}, respectively, are presented in Figure \ref{fig:exp_error}, which illustrate the performance of the geometric nonlinear controller during the autonomous trajectory tracking in this test. 
Figures \ref{fig:exp_v1}, \ref{fig:exp_v2}, and \ref{fig:exp_v3} illustrate the estimated velocity obtained form EKF namely, $\hat{v}$ and plotted verse the desired velocity, $v_{d}$. We have also presented the integrated velocity, $\frac{dx}{dt}$ in this figure by taking direct derivative from the position measurements to magnify the performance of the filter. Figures \ref{fig:exp_W1}, \ref{fig:exp_W2}, and \ref{fig:exp_W3} are also present the angular velocity measurements from IMU, and the estimated angular velocity from the EKF, $\hat{\Omega}$. As it is clear from the figures, EKF is able to improve the measurement noises significantly and provide more smoother data for real-time experimentation. The effect of noise from velocity measurements is substantially reduced with the extended Kalman filter, while avoiding any lagging issues common with low-pass filtering. 

%
\section{CONCLUSIONS}
An extensive Extended Kalman Filter on $\SE$ for full nonlinear dynamic model of quadrotor UAV employing geometric nonlinear controller presented in this paper. Linearization performed considering all the coupling effects between translational and rotational dynamics to provide an precise estimation characteristics in different scenarios of sensor failure or receiving highly noised measurements. These results validated by numerical simulations followed by experimental results for velocity estimation during an autonomous trajectory tracking.

\section*{Acknowledgment}
\addcontentsline{toc}{section}{Acknowledgment}
The authors would like to acknowledge Evan Kaufman for his helpful comments that served to improve the clarity of the discussion and writing of this paper.

\bibliographystyle{IEEEtran}   
\bibliography{ICUAS16}  

\begin{thebibliography}{10}
\providecommand{\url}[1]{#1}
\csname url@rmstyle\endcsname
\providecommand{\newblock}{\relax}
\providecommand{\bibinfo}[2]{#2}
\providecommand\BIBentrySTDinterwordspacing{\spaceskip=0pt\relax}
\providecommand\BIBentryALTinterwordstretchfactor{4}
\providecommand\BIBentryALTinterwordspacing{\spaceskip=\fontdimen2\font plus
\BIBentryALTinterwordstretchfactor\fontdimen3\font minus
  \fontdimen4\font\relax}
\providecommand\BIBforeignlanguage[2]{{%
\expandafter\ifx\csname l@#1\endcsname\relax
\typeout{** WARNING: IEEEtran.bst: No hyphenation pattern has been}%
\typeout{** loaded for the language `#1'. Using the pattern for}%
\typeout{** the default language instead.}%
\else
\language=\csname l@#1\endcsname
\fi
#2}}

\bibitem{kumargrasp2013}
D.~Mellinger, M.~Shomin, N.~Michael, and V.~Kumar, ``Cooperative grasping and
  transport using multiple quadrotors,'' in \emph{Distributed Autonomous
  Robotic Systems}.\hskip 1em plus 0.5em minus 0.4em\relax Springer Berlin
  Heidelberg, 2013, vol.~83, pp. 545--558.

\bibitem{kim2013grasping}
S.~Kim, S.~Choi, and H.~J. Kim, ``Aerial manipulation using a quadrotor with a
  two dof robotic arm,'' in \emph{Intelligent Robots and Systems}, November
  2013, pp. 4990--4995.

\bibitem{gooddaewontaeyoungacc14}
F.~A. Goodarzi, D.~Lee, and T.~Lee, ``Geometric stabilization of quadrotor
  {UAV} with a payload connected by flexible cable,'' in \emph{Proceedings of
  American Control Conference, Portland OR}, 2014, pp. 4925--4930.

\bibitem{05983144}
G.~N. Kostas~Alexis, Christos~Papachristos and A.~Tzes, ``Model predictive
  quadrotor indoor position control,'' \emph{IEEE, 19th Mediterranean
  Conference on Control and Automation}, pp. 1247--1252, June 2011.

\bibitem{07347700}
I.~Gumusboga and E.~Kiyak, ``An integrated navigation system design for
  quadrotors,'' \emph{Sensor Data Fusion: Trends, Solutions, Applications
  (SDF)}, pp. 1--5, October 2015.

\bibitem{07364907}
K.-H. Oh and H.-S. Ahn, ``Extended kalman filter with multi-frequency reference
  data for quadrotor navigation,'' \emph{IEEE, 15th International Conference on
  Control, Automation and Systems (ICCAS 2015)}, pp. 201--206, October 2015.

\bibitem{07361930}
F.~C. Majid~Moghadam, ``Actuator and sensor fault detection and diagnosis of
  quadrotor based on two-stage kalman filter,'' \emph{IEEE, 5th Australian
  Control Conference (AUCC)}, pp. 182--187, November 2015.

\bibitem{05649111}
N.~G. Frank~Hoffmann and T.~Bertram, ``Attitude estimation and control of a
  quadrocopter,'' \emph{The 2010 IEEE/RSJ International Conference on
  Intelligent Robots and Systems}, pp. 1072--1077, October 2010.

\bibitem{06390926}
S.~Wang and Y.~Yang, ``Quadrotor aircraft attitude estimation and control based
  on kalman filter,'' \emph{IEEE, Proceedings of 31st Chinese Control
  Conference}, pp. 5634--5639, July 2012.

\bibitem{07053092}
Y.~Z. Yilin~Hong, Xueqiu~Lin and Y.~Zhao, ``Real-time pose estimation and
  motion control for a quadrotor uav,'' \emph{Proceeding of the 11th World
  Congress on Intelligent Control and Automation}, pp. 2370--2375, 2014.

\bibitem{BhaBerSCL00}
S.~Bhat and D.~Bernstein, ``A topological obstruction to continuous global
  stabilization of rotational motion and the unwinding phenomenon,''
  \emph{Systems and Control Letters}, vol.~39, pp. 66--73, 2000.

\bibitem{06713891}
K.~K. A.~B. Samir~Zeghlache, Djamel~Saigaa, ``State vector estimation using
  extended filter kalman for the sliding mode controlled quadrotor helicopter
  in vertical flight,'' \emph{8th International Conference on Electrical and
  Electronics Engineering (ELECO)}, pp. 492--496, November 2013.

\bibitem{farhadacc15}
F.~A. Goodarzi and T.~Lee, ``Dynamics and control of quadrotor {UAVs}
  transporting a rigid body connected via flexible cables,'' in
  \emph{Proceedings of American Control Conference, Chicago IL}, 2015, pp.
  4677--4682.

\bibitem{IJCASPAPERfarhad14}
F.~A. Goodarzi, D.~Lee, and T.~Lee, ``Geometric control of a quadrotor {UAV}
  transporting a payload connected via flexible cable,'' \emph{International
  Journal of Control, Automation and Systems}, vol.~13, no.~6, December 2015.

\bibitem{farhadthesisphd}
F.~A. Goodarzi, ``Geometric nonlinear controls for multiple cooperative
  quadrotor {UAVs} transporting a rigid body,'' Ph.D. dissertation, The George
  Washington University, August 2015.

\bibitem{Farhad2013}
F.~Goodarzi, D.~Lee, and T.~Lee, ``Geometric nonlinear {PID} control of a
  quadrotor {UAV} on {{\SE}},'' in \emph{in Proceedings of the European Control
  Conference}, 2013, pp. 3845--3850.

\bibitem{farhadASME15}
F.~A. Goodarzi, D.~Lee, and T.~Lee, ``Geometric adaptive tracking control of a
  quadrotor unmanned aerial vehicle on {\SE} for agile maneuvers,''
  \emph{Journal of Dynamic Systems, Measurement, and Control}, vol. 137, no.~9,
  pp. 091\,007--12, September 2015.

\bibitem{BulLew05}
F.~Bullo and A.~Lewis, \emph{Geometric control of mechanical systems}, ser.
  Texts in Applied Mathematics.\hskip 1em plus 0.5em minus 0.4em\relax
  Springer, 2005, vol.~49, modeling, analysis, and design for simple mechanical
  control systems.

\bibitem{LeeITCST13}
T.~Lee, ``Robust adaptive tracking on {$\SO$} with an application to the
  attitude dynamics of a quadrotor {UAV},'' \emph{IEEE Transactions on Control
  Systems Technology}, vol.~21, no.~5, pp. 1924--1930, 2013.

\bibitem{SubJAS04}
K.~Subbarao, ``Nonlinear {PID}-like controllers for rigid-body attitude
  stabilization,'' \emph{Journal of the Astronautical Sciences}, vol.~52, no.
  1-2, pp. 61--74, 2004.

\bibitem{SubAkeJGCD04}
K.~Subbarao and M.~Akella, ``Differentiator-free nonlinear
  proportional-integral controllers for rigid-body attitude stabilization,''
  \emph{Journal of Guidance, Control, and Dynamics}, vol.~27, no.~6, pp.
  1092--1096, 2004.

\bibitem{ShoJuaPACC02}
L.~Show, J.~Juang, C.~Lin, and Y.~Jan, ``Spacecraft robust attitude tracking
  design: {PID} control approach,'' in \emph{Proceeding of the American Control
  Conference}, 1360-1365, Ed., 2002.

\bibitem{LeeLeoAJC13}
M.~L. T.~Lee and N.~McClamroch, ``Nonlinear robust tracking control of a
  quadrotor {UAV} on {\SE},'' \emph{Asian Journal of Control}, vol.~15, no.~2,
  pp. 391--408, Mar. 2013.

\bibitem{RungeKutta87}
J.~C. Butcher, \emph{The numerical analysis of ordinary differential equations:
  Runge-Kutta and general linear methods}.\hskip 1em plus 0.5em minus
  0.4em\relax New York, NY, USA: Wiley-Interscience, 1987.

\end{thebibliography}


\appendix

\subsection{Proof for Proposition \ref{prop:linearizedcontroled}}\label{sec:controllinear}
We use the definition and properties of time derivative and variation to linearize the closed loop system. The variation of time derivative of each state in the nonlinear equations can be presented as follow
\begin{gather}
\delta \dot{x}=\delta v,\label{eqn:ddx}\\
\delta{\dot{v}}=m_{21}\delta x+m_{22}\delta v +m_{23}\eta+m_{24}\delta e_{i1},\label{eqn:ddv}\\ 
\dot{\eta}=\delta\Omega-\hat{\Omega}\eta,\label{eqn:deta}
\end{gather}
\begin{align}
\delta\dot{\Omega}=&m_{41}\delta x+m_{42}\delta v+m_{43}\eta\nonumber\\
&+m_{44}\delta\Omega+m_{45}\delta e_{i1}-k_{I}J^{-1}\delta e_{i2},\label{eqn:ddW}
\end{align}
\begin{gather}
\delta \dot{e}_{i1}=c_1\delta e_{x}+\delta e_{v}=c_1\delta{x}+\delta{v},
\end{gather}
\begin{align}
\delta \dot{e}_{i2}=&c_2\delta e_{R}+\delta e_{\Omega}\nonumber\\
&=m_{61}\delta{x}+m_{62}\delta{v}+m_{63}\delta{\eta}+m_{64}\delta{\Omega}+m_{65}\delta{e_{i1}}.
\end{align}
where all the sub-matrices $m_{ij}$ in the above expressions are presented in the following sections. These equations can be written in a matrix form as
\begin{align}
\dot{\xb}=A_{L}\xb,
\end{align}
where the state vector $\xb\in\Re^{18\times1}$ is given by
\begin{align*}\label{eqn:lineqn}
\xb=\begin{bmatrix}
\delta{x},\delta v,\eta,\delta\Omega,\delta{e}_{i1},\delta{e}_{i2}\\
\end{bmatrix}^{T},
\end{align*}
and matrix $A_k\in\Re^{18\times 18}$
\begin{align}
A_{L}=\begin{bmatrix}
0&I&0&0&0&0 \\
m_{21}&m_{22}&m_{23}&0&m_{24}&0\\
0&-\hat{\Omega}&I&0&0&0\\
m_{41}&m_{42}&m_{43}&m_{44}&m_{45}&-k_{I}J^{-1}\\
c_1&I&0&0&0&0 \\
m_{61}&m_{62}&m_{63}&m_{64}&m_{65}&0 
\end{bmatrix}.
\end{align}
\subsection{Proof for Equation \refeqn{ddv}} 
By taking time-derivative of both sides of Eq.~\refeqn{EL2}, we would have
\begin{align}\label{eqn:deltaxdot}
m\delta{\dot{v}}=-\delta{f}Re_{e}-f\delta{R}e_{3}=-\delta{f}Re_{e}-fR\hat{\eta}e_{3},
\end{align}
using \refeqn{f} and
\begin{align}\label{eqn:deltaA}
\delta{A}=-k_{x}\delta{e}_{x}-k_{v}\delta{e}_{v},
\end{align}
we obtain
\begin{align}
\delta f=-(-k_{x}\delta x-k_{v}\delta v)\cdot R3_{e}-A\cdot R\hat{\eta}e_{3}.
\end{align}
Substituting the above equations into \refeqn{deltaxdot} and simplifying
\begin{align}\label{eqn:deltav}
\delta{\dot{v}}=m_{21}\delta x+m_{22}\delta v +m_{23}\eta+m_{24}\delta e_{i1},
\end{align}
where $m_{21},m_{22},m_{23},m_{24}\in\Re^{3\times3}$ are defined as  
\begin{align}
m_{21}=-\frac{k_{x}}{m}(Re_{3})(Re_{3})^{T},\;m_{22}=\frac{k_{v}}{k_{x}}m_{21},\; m_{24}=\frac{k_{I}}{k_{x}}m_{21},
\end{align}
\begin{align}
m_{23}=-\frac{1}{m}(Re_{3}A^{T}R\hat{e}_{3}+(A\cdot Re_{3})R\hat{e}_{3}),
\end{align}
\subsection{Proof for Equation \refeqn{deta}}
Taking derivative of Eq.~\refeqn{EL3}, and using Eq.~\refeqn{delR}, we obtain
\begin{align}
R\hat{\Omega}\hat{\eta}+R\hat{\dot{\eta}}=R\hat{\eta}\hat{\Omega}+R\delta\hat{\Omega}.
\end{align}
Multiplying both side of the above expression by $R^{-1}$, using $\hat{x}\hat{y}-\hat{y}\hat{x}=(xy)^\wedge$ and simplifying, Eq. \refeqn{deta} is observed.
\subsection{Proof for Equation \refeqn{ddW}}
We first find the variation of \refeqn{errr} and \refeqn{eomega}, considering \refeqn{M} and simplify the equations utilizing $A^{T}\hat{x}+\hat{x}A=\trs{A}I-A$, to obtain
\begin{align}\label{eqn:eWd}
\delta e_{\Omega}=\delta\Omega-(R^{T}R_{d}\Omega_{d})^{\wedge}\eta+R^{T}R_{d}\hat{\Omega}_{d}\eta_{d}-R^{T}R_{d}\delta\Omega_{d},
\end{align}
\begin{align}\label{eqn:eRd}
\delta e_{R}=\frac{1}{2}(\trs{R^{T}R_{d}}I-R^{T}R_{d})\eta-\frac{1}{2}(\trs{R_{d}^{T}R}I-R_{d}^{T}R)\eta_{d}.
\end{align}
Then, we need to find the variation for $R_{d}$, $\Omega_{d}$ and their time-derivatives which are summarized below in each section. We utilize these derivations to find an explicit expression for the variation of Eq.~\refeqn{EL4} represented with the linearized state variables.
\subsubsection{Variation of $R_d$}
The desired attitude $R_d$ is defined as follows
\begin{align*}
R_d = [b_{1c},\, b_{2c},\, b_{3c}]
=[b_{2c}\times b_{3c},\, \frac{b_{3c}\times b_{1d}}{\|b_{3c}\times b_{1d}\|},\, b_{3c}],
\end{align*}
where
\begin{align*}
\delta b_{3c}=-\frac{\delta{A}}{\|A\|}+\frac{A(A\cdot \delta{A})}{\|A\|^{3}} = b_{3c} \times (b_{3c} \times \frac{\delta A}{\|A\|}).
\end{align*}
Using Eq. \refeqn{deltaA} and simplifying
\begin{align*}
\delta b_{3c} = z_3\times b_{3c},\; z_{3}=-b_{3c} \times \frac{\delta A}{\|A\|},
\end{align*}
where $z_{3}\in\Re^{3\times1}$. Similarly, the variation of $b_{2c}$ and $b_{1c}$ are given by
\begin{gather}
\delta b_{2c} = z_2\times b_{2c},\; \delta b_{1c} = z_1\times b_{1c},
\end{gather}
where vectors $z_1, z_2\in\Re^{3\times1}$ are defined as 
\begin{gather}
z_1 = (b_{3c}\cdot z_2) b_{3c} + ( b_{2c}\cdot z_3) b_{2c},\\
z_2 = \frac{b_{2c}\times ((z_3\times b_{3c}) \times b_{1d})}{\|b_{3c}\times b_{1d}\|}.
\end{gather}
From the above derivations, variation of $R_d$ is given by
\begin{align}\label{eqn:deltard}
\delta R_d = [z_1\times b_{1c},\, z_2\times b_{2c},\, z_3\times b_{3c}].
\end{align}
As $R_d\in\SO$, the variation of $R_d$ can be written as $\delta R_d = R_d\hat\eta_d$ for $\eta_d\in\Re^{3\times1}$. Thus, using \refeqn{deltard} and simplifying, $\eta_d$ is given by
\begin{align*}
\eta_d = (R_d^T\delta R_d)^{\vee} =a_{1}z_{3},
\end{align*}
where $a_{1}\in\Re^{3\times 3}$ is
\begin{align}
a_{1}=[b_{1c}^T,\; b_{2c}^T,\; \frac{(b_{1d}\cdot b_{3c})}{\|b_{3c}\times b_{1d}\|^2}b_{1d}^T]^{T}.
\end{align}
\subsubsection{Variation of $\Omega_{d}$}
We use the angular velocity equation $\hat{\Omega}_{d}=R_{d}^{T}\dot{R}_{d}$ and take variation from both sides. First we find an expression for the time derivative of $R_{d}$ which is given as
\begin{align*}
\dot{R}_d = [\dot{b}_{1c},\, \dot{b}_{2c},\, \dot{b}_{3c}].
\end{align*}
Similar to the pervious steps we can find an expression for 
\begin{gather}
\dot{b}_{3c}=\zeta_{3}\times b_{3c},\; \dot{b}_{2c}=\zeta_{2}\times b_{2c},\; \dot{b}_{1c}=\zeta_{1}\times b_{1c},
\end{gather}
where $\zeta_{1},\zeta_{2},\zeta_{3}\in\Re^{3\times1}$ are
\begin{gather}
\zeta_{3}=-b_{3c}\times\frac{\dot{A}}{\|A\|},\\
\zeta_{2}=\frac{b_{2c}\times((\zeta_{3}\times b_{3c})\times b_{1d}+b_{3c}\times\dot{b}_{1d})}{\|b_{3c}\times b_{1d}\|},\\
\zeta_{1}=(b_{3c}\cdot\zeta_{2})b_{3c}+(b_{2c}\cdot\zeta_{3})b_{2c},
\end{gather}
thus
\begin{align*}\label{eqn:rd_dot}
\dot{R}_d = [\dot{b}_{1c},\, \dot{b}_{2c},\, \dot{b}_{3c}]=[\zeta_{1}\times b_{1c},\ \zeta_{2}\times b_{2c},\ \zeta_{3}\times b_{3c}].
\end{align*}
Substituting the above expression into $\hat{\Omega}_{d}=R_{d}^{T}\dot{R}_{d}$ and simplifying, $\Omega_d$ can be written as
\begin{align*}
\Omega_d = a_{1}\zeta_3+a_{2},
\end{align*}
where $a_{2}\in\Re^{3\times1}$
\begin{align}
a_{2}=[0,\;0,\; \frac{\dot{b}_{1d}\cdot b_{2c}}{\|b_{3c}\times b_{1d}\|}]^{T}.
\end{align}
Time-derivative of $\eta_{d}$ can be presented as
\begin{align}\label{eqn:etadot}
\dot{\eta}_{d}=\dot{a}_{1}z_{3}+a_{1}\dot{z}_{3}.
\end{align} 
After simplifying, we can present $\dot{z}_{3}$ as
\begin{align*}
\dot{z}_{3}=B_{1}\delta x+B_{2}\delta v+B_{3}\eta+B_{4}\delta e_{i1},
\end{align*}
where sub-matrices $B_{1},B_{2},B_{3},B_{4}\in\Re^{3\times3}$ are
\begin{align*}
B_{1}=-k_{x}X_{1}-k_{v}X_{2}m_{21}-k_{I}B_{5}X_{2},
\end{align*}
\begin{align*}
B_{2}=-k_{v}X_{1}-(k_{x}+k_{I})X_{2}-k_{v}X_{2}m_{22},
\end{align*}
\begin{align*}
B_{3}=-k_{v}X_{2}m_{23},\; B_{4}=-k_{I}X_{1}-k_{v}X_{2}m_{24},
\end{align*}
and $X_{1},X_{2}\in\Re^{3\times3}$
\begin{align*}
X_{1}=-\frac{(\dot{b}_{3c})^{\wedge}}{\|A\|}+\frac{({b}_{3c})^{\wedge}(A\cdot\dot{A})}{\|A\|^{3}}, \; X_{2}=-\frac{(b_{3c})^{\wedge}}{\|A\|}.
\end{align*}
Substituting the time derivative of $a_{1}$ and the above expressions into Eq. \refeqn{etadot}
\begin{align*}
\dot{\eta}_{d}=B_{5}\delta x+B_{6}\delta v+B_{7}\eta+k_{I}B_{9}\delta e_{i1},
\end{align*}
where $B_{5},B_{6},B_{7},B_{8},B_{9}\in\Re^{3\times3}$ are given by
\begin{align*}
B_{5}=\frac{k_{x}\dot{a}_{1}(b_{3c})^{\wedge}}{\|A\|}+a_{1}B_{1},\; B_{6}=\frac{k_{v}\dot{a}_{1}(b_{3c})^{\wedge}}{\|A\|}+a_{1}B_{2},
\end{align*}
\begin{align*}
B_{7}=a_{1}B_{3},\; B_{8}=\frac{k_{I}\dot{a}_{1}(b_{3c})^{\wedge}}{\|A\|}+a_{1}B_{4},\; B_{9}=\frac{(\hat{\Omega}_{d}a_{1}(b_{3c}^{\wedge}))}{\|A\|}.
\end{align*}
After simplifying, the variation of $\Omega_{d}$ is given by
\begin{align}\label{eqn:omegddd}
\delta\Omega_{d}=&\dot{\eta}_{d}+\hat{\Omega}_{d}\eta_{d}\nonumber\\
=&(B_{5}+k_{x}B_{9})\delta x+(B_{6}+k_{v}B_{9})\delta v\nonumber\\
&+B_{7}\eta+(B_{8}+k_{I}B_{9})\delta e_{i1}.
\end{align}
\subsubsection{Time derivative of $\delta{\Omega}_{d}$}
Taking time derivative of Eq.~\refeqn{omegddd}, using $\dot{\eta}=\delta\Omega-\hat{\Omega}\eta$, and Eq. \refeqn{deltav}, we have
\begin{align*}
\delta\dot{\Omega}_{d}=F_{1}\delta x+F_{2}\delta v+F_{3}\eta+B_{7}\delta\Omega+F_{4}\delta e_{i1},
\end{align*}
where matrices $F_{1},F_{2},F_{3},F_{4}\in\Re^{3\times3}$ are defined as
\begin{align*}
F_{1}=\dot{B}_{5}+k_{x}\dot{B}_{9}+(B_{6}+k_{v}B_{9})A_{21}+B_{5}(B_{8}+k_{I}B_{9}),
\end{align*}
\begin{align*}
F_{2}=&B_{5}+k_{x}B_{9}+\dot{B}_{6}+k_{v}\dot{B}_{9}\\
&+(B_{6}+k_{v}B_{9})A_{22}+(B_{8}+k_{I}B_{9}),
\end{align*}
\begin{align*}
F_{3}=(B_{6}+k_{v}B_{9})A_{23}+\dot{B}_{7}-B_{7}\hat{\Omega},
\end{align*}
\begin{align*}
F_{4}=\dot{B}_{8}+k_{I}\dot{B}_{9}.
\end{align*}
\subsubsection{Variations of $e_{i1}$ and $e_{i2}$}
The variation of $e_{i1}$ and $e_{i2}$ can be expressed as
\begin{align*}
\delta e_{i1}=c\delta e_{x}+\delta e_{v}=c\delta{x}+\delta{v},
\end{align*}
\begin{align*}
\delta e_{i2}&=c\delta e_{R}+\delta e_{\Omega}\nonumber\\
&=m_{61}\delta{x}+m_{62}\delta{v}+m_{63}\delta{\eta}+m_{64}\delta{x\Omega}+m_{65}\delta{e_{i1}},
\end{align*}
where $m_{61},m_{62},m_{63},m_{64},m_{65}\in\Re^{3\times3}$ are defined as
\begin{align*}
m_{61}=-B_{5}G_{5}Y_{1}+G_{2}Y_{1}-G_{3}(B_{5}+k_{x}B_{9}),
\end{align*}
\begin{align*}
m_{62}=-B_{5}G_{5}Y_{2}+G_{2}Y_{1}-G_{3}(B_{6}+k_{v}B_{9}),
\end{align*}
\begin{align*}
m_{63}=B_{5}G_{4}-G_{1}-G_{3}B_{7},\; m_{64}=I_{3\times 3},
\end{align*}
\begin{align*}
m_{65}=-B_{5}G_{5}Y_{3}+G_{2}Y_{3}-G_{3}(B_{8}+k_{I}B_{9}),
\end{align*}
and matrices $G_{1},G_{2},G_{3},G_{4},G_{5}\in\Re^{3\times3}$ are 
\begin{align*}
G_{1}=(R^{T}R_{d}\Omega_{d})^{\wedge}, G_{2}=R^{T}R_{d}\hat{\Omega_{d}}, G_{3}=R^{T}R_{d},
\end{align*}
\begin{align*}
G_{4}=\frac{1}{2}(\trs{R^{T}R_{d}}I-R^{T}R_{d}),
\end{align*}
\begin{align*}
G_{5}=\frac{1}{2}(\trs{R_{d}^{T}R}I-R_{d}^{T}R).
\end{align*}
\subsubsection{Variation for Equation \refeqn{EL4}}
Substituting the above derivations for $\delta\Omega_{d}$, $\delta R_{d}$ into Eq.~\refeqn{eWd}, and \refeqn{eRd} and simplifying, we can present the following expression for Eq.~\refeqn{EL4} 
\begin{align}
\delta\dot{\Omega}=&m_{41}\delta x+m_{42}\delta v+m_{43}\eta+m_{44}\delta\Omega\nonumber\\
&+m_{45}\delta e_{i1}-k_{I}J^{-1}\delta e_{i2},
\end{align}
where $m_{41},m_{42},m_{43},m_{44},m_{45},\in\Re^{3\times3}$
\begin{align}
m_{41}=B_{11}\frac{k_{x}a_{1}(b_{3c})^{\wedge}}{\|A\|}+B_{14}F_{1}+B_{13}(B_{5}+k_{x}B_{9}),
\end{align}
\begin{align}
m_{42}=B_{11}\frac{k_{v}a_{1}(b_{3c})^{\wedge}}{\|A\|}+B_{14}F_{2}+B_{13}(B_{6}+k_{v}B_{9}),
\end{align}
\begin{align}
m_{43}=B_{10}+B_{14}F_{3}+B_{13}B_{7},\;m_{44}=B_{12}+B_{14}B_{7},
\end{align}
\begin{align}
m_{45}=B_{11}\frac{k_{I}a_{1}(b_{3c})^{\wedge}}{\|A\|}+B_{13}(B_{8}+k_{I}B_{9})+B_{14}F_{4},
\end{align}
and sub-matrices $B_{10}, B_{11}, B_{12}. B_{13}, B_{14}\in\Re^{3\times3}$ are given by the following expressions
\begin{align}
B_{10}=J^{-1}[&-k_{R}G_{4}+k_{\Omega}(R^{T}R_{d}\Omega_{d})^{\wedge}\nonumber\\
&-(J R^{T}R_{d}\Omega_{d})^{\wedge}(R^{T}R_{d}\Omega_{d})^{\wedge}\nonumber\\
&+(R^{T}R_{d}\Omega_{d})^{T}J(R^{T}R_{d}\Omega_{d})^{\wedge}+J(R^{T}R_{d}\dot{\Omega}_{d})^{\wedge}],
\end{align}
\begin{align}
B_{11}=J^{-1}[&k_{R}G_{5}-k_{\Omega}R^{T}R_{d}\hat{\Omega}_{d}\nonumber\\
&+(J R^{T}R_{d}\Omega_{d})^{\wedge}R^{T}R_{d}\hat{\Omega}_{d}\nonumber\\
&-(R^{T}R_{d}\Omega_{d})^{\wedge}J R^{T}R_{d}\hat{\Omega}_{d}-J R^{T}R_{d}\hat{\dot{\Omega}}_{d}],
\end{align}
\begin{align}
B_{12}=J^{-1}[-k_{\Omega}I(J\Omega)^{\wedge}-\hat{\Omega}J],
\end{align}
\begin{align}
B_{13}=J^{-1}[&k_{\Omega}R^{T}R_{d}-(J R^{T}R_{d}\Omega_{d})^{\wedge}R^{T}R_{d}\nonumber\\
&+(R^{T}R_{d}\Omega_{d})^{\wedge}J R^{T}R_{d}],
\end{align}
\begin{align}
B_{14}=J^{-1}[JR^{T}R_{d}].
\end{align}


\end{document}